\newtheorem*{thnnumber}{Theorem}
\theoremstyle{definition}
\theoremstyle{remark}
\newtheoremstyle{thm}
  {12pt}
  {12pt}
  {\itshape}
  {\parindent}
  {\scshape}
  {.}
  {5pt}
  {}
\theoremstyle{thm}
\newtheorem*{T6.17}{Theorem \ref{gradientthm1}}
\newtheorem{thm}{Theorem}[section]
\newtheoremstyle{prop}
  {12pt}
  {12pt}
  {\itshape}
  {\parindent}
  {\scshape}
  {.}
  {5pt}
  {}
\theoremstyle{prop}
\newtheorem{prop}[thm]{Proposition}
\newtheoremstyle{lem}
  {12pt}
  {12pt}
  {\itshape}
  {\parindent}
  {\scshape}
  {.}
  {5pt}
  {}
\theoremstyle{lem}
\newtheorem*{L2.6}{Lemma \ref{lem2.4}}
\newtheorem{lem}[thm]{Lemma}
\newtheoremstyle{defn}
  {12pt}
  {12pt}
  {\itshape}
  {\parindent}
  {\scshape}
  {.}
  {5pt}
  {}
\theoremstyle{defn}
\newtheorem{defn}[thm]{Definition}
\newtheoremstyle{examp}
  {12pt}
  {12pt}
  {}
   {\parindent}
  {\scshape}
  {.}
  {5pt}
  {}
\theoremstyle{examp}
\newtheoremstyle{cor}
  {12pt}
  {12pt}
  {\itshape}
  {\parindent}
  {\scshape}
  {.}
  {5pt}
  {}
\theoremstyle{cor}
\newtheoremstyle{recipe}
  {12pt}
  {12pt}
  {\itshape}
   {\parindent}
  {\scshape}
  {.}
  {5pt}
  {}
\theoremstyle{recipe}
\newtheoremstyle{rem}
  {12pt}
  {12pt}
  {}
   {\parindent}
  {\scshape}
  {.}
  {5pt}
  {}
\theoremstyle{rem}
\newtheorem{rem}[thm]{Remark}
\newcommand{\bp}{\begin{proof}}
\newcommand{\ep}{\end{proof}}
\newcommand{\abs}[1]{\left\vert#1\right\vert}
\newcommand{\ssc}{\text{sc}}
\renewcommand{\epsilon}{\varepsilon}
\newcommand{\wh}{\widehat}
\newcommand{\wt}{\widetilde}
 \newcommand{\e}{\text{e}}
\newcommand{\ov}{\overline}
\newcommand{\ke}{K^{\text{e}}}
\newcommand{\kin}{K^{\text{in}}}
\newcommand{\st}{S_{\Theta}}
\newcommand{\sg}{S_{\Theta}^{\text{g}}}
\newcommand{\sba}{S_{\Theta}^{\text{b}}}
\newcommand{\sth}{\supp \Theta}
\newcommand{\muo}{\mu_{\omega}}
\newcommand{\cl}{{\mathcal L}}
\newcommand{\supp}{\operatorname{supp}}
\newcommand{\R}{{\mathbb R}}
\newcommand{\N}{{\mathbb N}}
\newcommand{\Q}{{\mathbb Q}}
\newcommand{\dif}{\text{Diff}_{\ssc}}
\def\abs#1{\mathopen|#1\mathclose|}
\gdef\hex{"}}
\mathchardef\laplace=\hex0001
\mathchardef\nabla=\hex0272
\def\@@dalembert#1#2{\setbox0\hbox{$#1\mathrm I$}

  \vrule height\ht0 depth\z@ width.04\ht0

  \rlap{\vrule height\ht0 depth-.96\ht0 width.8\ht0}

  \vrule height.1\ht0 depth\z@ width.8\ht0

  \vrule height\ht0 depth\z@ width.1\ht0 }
\def\dalembert{\mathbin{\mathpalette\@@dalembert{}}\,}
\begin{document}

\title[Integration]{Integration Theory for  zero sets of polyfold Fredholm
sections} \maketitle  \begin{center}
\end{center}\vspace{0.5cm}\begin{center} H.
Hofer\footnote{Research partially supported by NSF grant
DMS-0603957.}, K. Wysocki \footnote{Research partially supported by
NSF grant DMS-0606588. } and E. Zehnder
 \footnote{Research partially supported by  TH-project.}
\end{center}

 \tableofcontents

\section{Introduction}
The polyfold Fredholm theory, developed  in
\cite{HWZ1,HWZ2,HWZ3}, extends the classical Fredholm theory. An
outline of the theory and its applications is given in \cite{Hofer}.
One of the current main applications is the Symplectic Field Theory
(SFT)  initiated in  \cite{EGH}. It is   a  theory of symplectic invariants generalizing
the Gromov-Witten theory which started with Gromov's seminal paper \cite{G}. The SFT
has a rich algebraic structure. In order to make the algebraic formalism proposed in
\cite{EGH} rigorous one needs an integration theory for differential
forms over the solution spaces. This is not straightforward since,
in general, even for the choice of generic geometric data, there are
numerous compactness and transversality issues, so that a priori the
solution sets are quite bad sets. Fortunately there is a good
compactification of  the solution spaces for which we refer to  \cite{BEHWZ}. This
compactification is in part responsible for the wealth of algebraic
structure and it allows to apply the polyfold Fredholm theory. As
far as the transversality issues are concerned the polyfold theory
constructs a ``larger universe" in which the problems can be
 perturbed in an abstract way in order  to achieve transversality. Since,  in general,  we are dealing simultaneously
 with infinitely many interrelated Fredholm
problems,  the perturbations are not independent of each other.
Moreover,  we are dealing with equivalence classes of Fredholm
problems leading to local symmetries (since two different problems
might be isomorphic in several different ways). All these structures
and symmetries have to be preserved  under the  abstract
perturbations of the problems. It turns out that  single-valued
perturbations are not sufficient and multi-valued perturbations
compatible with the symmetries have to be used. Consequently the
solution sets will,  in general,  not be manifolds. Still, generically,
they carry enough structure so that one can define an integration of
differential forms over them. In some sense this is an integration
over a space obtained by dividing out from a category its morphisms,
i.e., an integration over isomorphism classes of objects. We call
this version of integration  ``branched integration". As we shall
see, Stokes' theorem is still valid. Our paper is related to recent
work by McDuff  \cite{Mc}, where she studies similar questions which
were also in part motivated by the polyfold theory. It is also
related to  the work of  Cieliebak, Mundet i Riera and Salamon  \cite{CRS}, where solution sets of Fredholm problems
equivariant with respect to a global group action are studied. In
fact, we pick up some of the ideas  from  \cite{CRS} and bring them into a
more general context. The basic goal in \cite{Mc} is to give an
intrinsic structure to the solution sets which among other things
allows an integration theory. In our theory we can integrate on
solution sets which are obtained under less transversality
assumptions than in \cite{Mc}. This will be important since the compatibility of perturbations with  certain structures (f.e operations, see \cite{Hofer} and the upcoming \cite{HWZ6}) quite often constraint the level of achievable genericity. Let us note that we did not attempt
in this paper to give an intrinsic structure to the solution sets,
i.e.,  view them as abstract objects without reference to the ambient
polyfold. This  is not needed for defining invariants. On the other hand,
it is  interesting to understand the  intrinsic
structure of the solution sets.  This problem will  be studied in  a
follow-up paper where  structures of the solution sets are described
which are finer than those constructed in \cite{Mc}.

\subsection{Background Material}

The reader is expected to be  familiar with the polyfold
theory presented in \cite{HWZ1, HWZ2, HWZ3}. We
recall that an M-polyfold $X$ is a second countable paracompact
space equipped with an atlas consisting of charts
$(U,\varphi,(O,{\mathcal S}))$ whose transition maps are sc-smooth.
The chart maps $\varphi:U\subset X\to O$ are homeomorphisms onto
open subsets $O$ of splicing cores. They are the local models
(replacing the open
 subsets of Banach spaces in the manifold theory)
 which lead to the generalized differential  geometry
  presented in \cite{HWZ1}.  The M-polyfolds can have subpolyfolds. In the following an important role is played by the special class of {\bf  finite dimensional submanifolds}  with boundaries with corners introduced in Definition 4.20 of \cite{HWZ2} and, for the reader's convenience, recalled  in Appendix \ref{fdimsubm}.

The boundary $\partial X$ of an M-poyfold $X$ is defined using the degeneracy index
$d:X\to \N_0:=\N\cup \{0\}$  introduced in \cite{HWZ1}.  The map $d$  has the following interpretation. A point $x\in X$ satisfying $d(x)=0$ is an interior point.
The boundary  $\partial X$ is  defined as
$$\partial X=\{x\in X\vert \, d(x)\geq 1\}.$$  A point $x$ satisfying $d(x)=1$ is called a good boundary point. Points satisfying  $d(x)\geq 2$ are corner points and the integer $d(x)$ indicates the order of the corner.

We recall   the concept of an ep-groupoid as introduced in \cite{HWZ3}. We
begin with the notion of a  {\bf groupoid}. A groupoid  is   a small
category $\mathfrak{G}$ in which every morphism (or arrow)  is
invertible. We shall denote its set of objects by $G$ and its set of
morphisms by ${\bf G}$. There are the usual five structure  maps
$(s, t, m, u, i)$. Namely, the source and target maps $s,t:{\bf
G}\rightarrow G$ assign to every morphism, denoted by $g:x\to y$,
its source $s(g)=x$ and its target $t(g)=y$.  The associative
multiplication (or composition) map
$$m: {\bf G}{{_s}\times_t}{\bf G}\rightarrow {\bf G}, \quad m(h,g)=h\circ g$$
is defined on the fibered product
$$ {\bf G}{{_s}\times_t}{\bf G}=\{(h, g)\in {\bf G}\times {\bf G}\vert \, s(h)=t(g)\}.$$
For every object $x\in G$,  there exists the  unit morphism $1_x:x\to x$ in ${\bf G}$. These unit morphisms  together define  the unit map
$u:G\to {\bf G}$ by $u(x)=1_x$. Finally, for every morphism $g:x\to y$ in $ {\bf G}$,  there exists
the inverse morphism $g^{-1}:y\to x$.
These inverses together define the inverse
map $i:{\bf G}\to {\bf G}$ by $i(g)=g^{-1}$. The orbit space of the groupoid $\mathfrak{G}$,
$$\abs{\mathfrak{G}}=G/\sim $$
is  the quotient of the objects by the equivalence relation $\sim$
defined by $x\sim y$ if and only if there exists a morphism $g:x\to
y$ between the two objects. The equivalence class $\{y\in G\vert \,
y\sim x\}$ will be denoted by $|x|$. For fixed $x\in G$,  we
denote by   $G_x$
 its {\bf isotropy group} defined by
$$G_x=\{ \text{morphisms\, } g:x\to x\}.$$
For the  sake of notational economy we shall denote in the following a groupoid as well as its object  set by the same letter $G$ and its morphism set by the bold letter ${\bf G}.$

Ep-groupoids, as defined next, can be viewed as M-polyfold versions of  \'etale and proper Lie-groupoids discussed e.g. in \cite{Mj} and \cite{MM}.

\begin{defn}
An  {\bf ep-groupoid}  $X$ is a groupoid $X$ together with M-polyfold
structures on the object set $X$ as well as on the morphism set ${\bf X}$
so that all the structure maps  $(s, t, m, u, i)$ are sc-smooth maps and the following
holds true.
\begin{itemize}
\item {\em ({\bf \'etale})} The source and target maps
$s$ and $t$ are surjective local sc-diffeomorphisms.
\item {\em ({\bf proper})} For every point $x\in X$,    there exists an
open neighborhood $V(x)$ so that the map
$t:s^{-1}(\overline{V(x)})\rightarrow X$ is a proper mapping.
\end{itemize}
\end{defn}

We  point out that if  $X$  is a groupoid  equipped  with
M-polyfold structures on the object set $X$ as well as on the
morphism set ${\bf X},$ and  $X$  is \'etale,  then  the
fibered product ${\bf X}{{_s}\times_t}{\bf X}$ has a natural
M-polyfold structure so that the multiplication map $m$ is defined
on an  M-polyfold.  Hence  it makes sense to talk about its
sc-smoothness. For a proof we refer to \cite{HWZ3}.

In order to have a smooth partition of unity available, we shall assume in the
following that the sc-structure on the ep-groupoid $X$ is based on
separable sc-Hilbert spaces.

In an ep-groupoid every morphism $g:x\to y$ can be extended to
a unique local diffeomorphism $t\circ s^{-1}$ satisfying $s(g)=x$
and $t(g)=y$. The properness assumption implies that  the isotropy groups
 $G_x$ are finite groups. The degeneracy index
 $d:X\to \N_0$ is defined on the M-polyfold $X$
 of objects as well as on the M-polyfold ${\bf X}$ of morphisms.
 An M-polyfold $X$ is equipped with a filtration
$$X=X_0\supset X_1\supset X_2\supset \cdots \supset X_{\infty}:=
\bigcap_{k\geq 0}X_k.$$
The set  $X_{\infty}$ is dense in every space $X_k$.  If $x$ belongs to
the M-polyfold  $X$, we denote by $\text{ml}(x)\in
\N_0\cup\{\infty\}$ the largest integer $m$ or $\infty$ so that
$x\in X_m$. Since in an ep-groupoid source and the target maps are
local sc-diffeomorphisms, and therefore preserve by definition the
levels, we conclude that
$$\text{ml }(x)=\text{ml}(y)=\text{ml}(g)$$
for every morphism $g:x\to y$.
Consequently, the above filtration on the  M-polyfold $X$  induces the filtration
$$\abs{X}=\abs{X_0}\supset  \abs{X_1}\supset \abs{X_2}\supset \cdots \supset \abs{X_{\infty}}:=\bigcap_{k\geq 0}\abs{X_k}$$
on the orbit space $\abs{X}=X/\sim.$
It follows
 from the corner recognition Proposition 3.14 in \cite{HWZ1}  that also the degeneracy index  $d:X\to \N_0$ satisfies
$$d(x)=d(y)=d(g)$$
for every morphism  $g:x\to y$.  Hence the filtration of the boundary $\partial X$  descents to the  filtration of the  orbits space $\abs{\partial X}.$

The  next  result from \cite{HWZ3} describes the local structure of the morphism set of an ep-groupoid in the neighborhood of an isotropy group.

\begin{prop}\label{prop1.2}
Let $x$  be an object of an ep-groupoid $X$.  Then there exist an open neighborhood
$U\subset X$ of $x$,  a group homomorphism
$$
\varphi:G_{x}\to  \text{Diff}_{\ssc}(U), \quad g\mapsto
\varphi_g,
$$
of the isotropy group into the group of sc-diffeomorphims
of $U$,  and an sc-smooth map
$$
\Gamma:G_x\times U\to {\bf X}
$$
having the following properties.
\begin{itemize}
\item[$\bullet$] $\Gamma(g,x)=g$.
\item[$\bullet$] $s(\Gamma(g,y))=y$ and $t(\Gamma(g,y))=\varphi_g(y)$.
\item[$\bullet$] If $h:y\rightarrow z$  is a morphism between two points in  $U$,  then there exists a
unique element $g\in G_x$ satisfying  $\Gamma(g,y)=h$.
\end{itemize}
\end{prop}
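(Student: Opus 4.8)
The plan is to exploit the étale property to turn each isotropy morphism into a local diffeomorphism and then use the properness property to shrink to a neighborhood on which these extensions are globally defined, pairwise compatible, and capture \emph{all} nearby morphisms. First I would fix $x$ and enumerate $G_x=\{g_1,\dots,g_n\}$ with $g_1=1_x$. Since $s$ and $t$ are surjective local sc-diffeomorphisms, for each $g_i:x\to x$ there is an open neighborhood $W_i\subset{\bf X}$ of $g_i$ on which $s|_{W_i}$ and $t|_{W_i}$ are sc-diffeomorphisms onto open neighborhoods of $x$ in $X$; set $\psi_i:=(t|_{W_i})\circ(s|_{W_i})^{-1}$, an sc-diffeomorphism between open neighborhoods of $x$ fixing $x$, and $\gamma_i:=(s|_{W_i})^{-1}:U_i\to W_i\subset{\bf X}$, so that $s(\gamma_i(y))=y$ and $t(\gamma_i(y))=\psi_i(y)$ for $y$ near $x$. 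This already produces the map $\Gamma$ on $G_x\times U$ in embryonic form, once a common domain $U$ is found.

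Next I would install the group-homomorphism property. A priori $\psi_i\circ\psi_j$ and the extension $\psi_k$ associated to the product $g_ig_j$ are germs of sc-diffeomorphisms fixing $x$; they agree near $x$ because $\gamma_i(\psi_j(y))\circ\gamma_j(y)$ and $\gamma_k(y)$ are both morphisms with the same source $y$ and, at $y=x$, both equal $g_ig_j$ — and by the étale property a morphism is locally determined by its source, so the two agree on a neighborhood of $x$. Intersecting finitely many such neighborhoods (one for each relation $g_ig_j=g_k$, plus one making all $W_i$ disjoint from $W_j$ for $g_i\ne g_j$, using that ${\bf X}$ is a manifold-like space so points are separated) yields a single open $U_0\ni x$ on which $g\mapsto\varphi_g:=\psi_g|_{U_0}$ is a genuine homomorphism into $\dif(U_0)$, after further shrinking $U_0$ to be invariant under this finite group action by replacing it with $\bigcap_i\varphi_{g_i}(U_0)$. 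Define $\Gamma(g,y):=\gamma_g(y)$; the first two bullets hold by construction.

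The third bullet — that \emph{every} morphism $h:y\to z$ between points of $U$ equals $\Gamma(g,y)$ for a unique $g\in G_x$ — is where the properness hypothesis is essential and is the main obstacle. Uniqueness is immediate from the étale property (if $\Gamma(g,y)=\Gamma(g',y)$ then $g=g'$ since $s$ is locally injective and $g,g'$ lie in a neighborhood where this holds, once $U$ is small). For existence, suppose not: then there are sequences $y_k,z_k\to x$ in $U$ and morphisms $h_k:y_k\to z_k$ with $h_k\notin\bigcup_i W_i$. Choose the properness neighborhood $V(x)$ from the definition and shrink $U\subset V(x)$ so that $\overline U\subset\overline{V(x)}$; then $h_k\in s^{-1}(\overline{V(x)})$ and $t(h_k)=z_k\to x$, so by properness of $t:s^{-1}(\overline{V(x)})\to X$ a subsequence $h_k\to h_\infty$ with $s(h_\infty)=t(h_\infty)=x$, i.e. $h_\infty=g_i$ for some $i$; but then $h_k\in W_i$ for large $k$, contradicting $h_k\notin\bigcup_i W_i$. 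Hence such a $U$ works, and running this argument carefully (it suffices to use convergence on level $0$ and then note that $s,t$ being local sc-diffeomorphisms preserves the relevant topology) gives the third bullet. Finally I would record that $\Gamma$ is sc-smooth: on $\{g\}\times U$ it is the sc-smooth map $(s|_{W_g})^{-1}$, and $G_x$ being discrete this is all that is required. Assembling these pieces — étale $\Rightarrow$ local extensions and local determination by source; properness $\Rightarrow$ no escaping morphisms; finiteness of $G_x$ $\Rightarrow$ finite intersections suffice — completes the proof.
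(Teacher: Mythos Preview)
The paper does not actually prove Proposition~1.2; it is quoted from \cite{HWZ3} as background material (``The next result from \cite{HWZ3} describes the local structure\ldots''). So there is no in-paper proof to compare against.

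That said, your proof plan is the standard and correct argument for this type of statement in \'etale proper groupoids, adapted to the sc-setting. The three ingredients---(i) using that $s,t$ are local sc-diffeomorphisms to extend each $g\in G_x$ to a germ $\varphi_g=t\circ s^{-1}$ and a local section $\gamma_g=(s|_{W_g})^{-1}$; (ii) forcing the homomorphism relations $\varphi_{g_i}\circ\varphi_{g_j}=\varphi_{g_ig_j}$ by the ``morphisms are locally determined by their source'' principle and then intersecting finitely many neighborhoods; (iii) the properness/compactness contradiction to rule out stray morphisms---are exactly what is needed, and the finiteness of $G_x$ (itself a consequence of properness) makes all the shrinking steps finite. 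Your uniqueness argument via disjointness of the $W_g$ is also fine since ${\bf X}$ is Hausdorff. One small point worth making explicit when you write it out: after replacing $U_0$ by $\bigcap_{g}\varphi_g(U_0)$ you should verify once more that the sections $\gamma_g$ are still defined on this invariant set and that the $W_g$'s over it remain pairwise disjoint; this is automatic but should be stated. This is essentially the argument one finds in \cite{HWZ3} and in the classical Lie-groupoid literature (e.g.\ \cite{MM}).
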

The group homomorphism $\varphi:G_x\to \text{Diff}_{\ssc}(U)$ is
called a {\bf natural representation of the isotropy group $G_x$} of
the element $x\in X$. The diffeomorphism $\varphi_g$ is given by
$t\circ s^{-1}$ where $s(g)=t(g)=x$. We see that every morphism
between points in $U$ belongs to the image of the map $\Gamma$ and
so has an extension to precisely one of the finitely many
diffeomorphisms $\varphi_g$ of $U$, where $g\in G_x$.

The next definition of a  branched ep-subgroupoid
generalizes ideas  from  \cite{CRS} where quotients of
manifolds by global group actions are considered.
We shall view the nonnegative rational numbers, denoted by $\Q^+=\Q\cap [0,\infty )$,
as the objects in a category having only the identities
as morphisms. We would like to mention  that branched
ep-subgroupoids will show up as solution sets of polyfold Fredholm
sections.

\begin{defn}\label{def1}  A {\bf  branched ep-subgroupoid of $X$}  is  a
functor
 $$\Theta:X\rightarrow {\mathbb Q}^+$$  having the following properties.
\begin{itemize}
\item[(1)] The support  of $\Theta$, defined by $\supp \Theta=\{x\in X\vert \, \Theta(x)>0\}$,  is
contained in $X_\infty$.
\item[(2)] Every  point $x\in \supp \Theta$ is contained in an open
neighborhood $U(x)=U\subset X$ such that
$$\supp \Theta \cap U= \bigcup_{i \in I}M_i,$$
where $I$ is a finite index set and where the sets $M_i$  are finite dimensional submanifolds of $X$ (Definition 4.20 in \cite{HWZ2}, see also Appendix \ref{fdimsubm}) all having the same dimension, and all in good position to the boundary $\partial X$. The submanifolds $M_i$ are called {\bf local branches} in $U$.
\item[(3)] There exist  positive rational numbers $\sigma_i$, $i\in I$, (called {\bf weights}) such that if $y\in \supp \Theta \cap U$, then
$$
\Theta(y)=\sum_{\{i \in I \vert   y\in M_i\}} \sigma_i.
$$
\item[(4)] The inclusion maps $M_i\to U$ are proper.
\item[(5)] There is a natural representation of the isotropy group $G_x$ acting by sc-diffeomorphisms on $ U$.
\end{itemize}
\end{defn}

\begin{figure}[htbp]
\mbox{}\\[2ex]
\centerline{\relabelbox
\epsfxsize 3.1truein \epsfbox{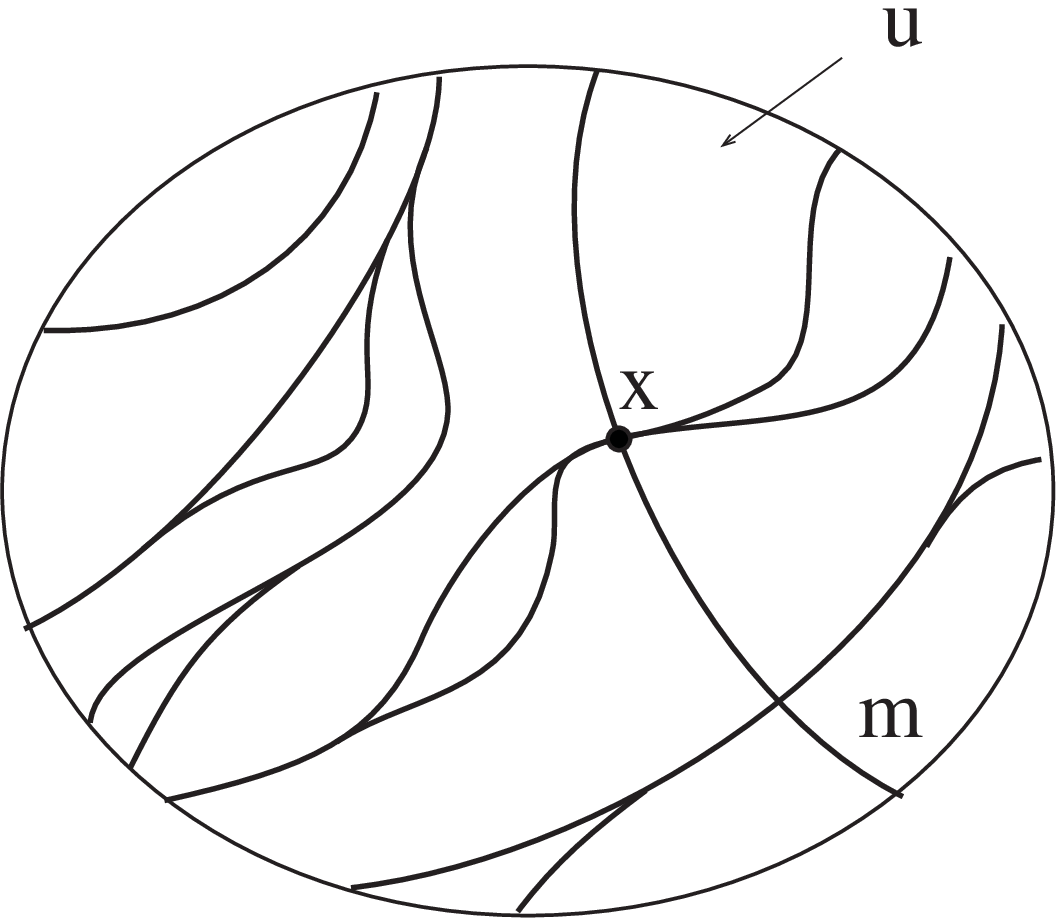}
\relabel {m}{$M_j$}
\relabel {u}{$U$}
\relabel {x}{$x$}
\endrelabelbox}
\caption{}\label{Fig1}
\mbox{}\\[1ex]
\end{figure}

The branches  $(M_i)_{i\in I}$ together with the weights
$(\sigma_i)_{i\in I}$ constitute  a {\bf local branching structure}
of $X$ in $U$.
\begin{rem}
Given a branched ep-subgroupoid $\Theta$ on the ep-groupoid $X$  we obtain
induced ep-subgroupoids $\Theta^i:X^i\rightarrow {\mathbb Q}^+$ for $i\geq 1$. Observe that the supports of $\Theta=\Theta^0$ and $\Theta^i$ coincide. The reader should notice that all upcoming constructions carried out for $\Theta$ will lead to the same result if carried out for $\Theta^i$.
\end{rem}

\begin{defn}
A branched ep-subgroupoid is called  {\bf $\mathbf{n}$-dimensional} if all
its local branches are n-dimensional submanifolds of $ M.$
\end{defn}

From the assumption that $\Theta:X\to \Q^+$ is a functor it follows that
$$\Theta (x)=\Theta (y)$$
if there is a morphism $g:x\to y$. We shall abbreviate the support of $\Theta$ by
$$S_{\Theta}=\supp \Theta =\{x\in X\vert \, \Theta (x)>0\}.$$
If $x\in S_{\Theta}$,  we can view the tangent space $T_xM_i$ of a local branch as a linear subspace of the tangent space $T_xX$. The tangent set $T_xS_{\Theta}$ is the union of the tangent spaces $T_xM_i$ which are called the local branches of the tangent set $T_xS_{\Theta}$. As we shall see later on, the tangent set $T_xS_{\Theta}$ is independent of the local branching structure. The union
$$TS_{\Theta}=\bigcup_{x\in S_{\Theta}}\{x\}\times T_xS_{\Theta}$$
can be viewed as a subset of the tangent bundle $TX$. It is
called the {\bf tangent set of the support $\mathbf{S}_{\mathbf{\Theta}}$}.

If  $g:x\rightarrow y$ is a morphism
between points $x,y\in X_1$, then $g$ belongs to ${\bf X}_1$  and extends to an sc-diffeomorphism  $t\circ s^{-1}:O(x)\rightarrow O(y)$. We define $Tg:T_xX\rightarrow
T_yX$ to be the linearization of $t\circ s^{-1}$ at $x$. If
$x\in \supp \Theta$, then  the  tangent map $Tg$  takes  a (linear) local branch of
$T_xS_\Theta$ to a local branch of $T_yS_\Theta$.

Next we define the notion of an orientation
of a branched ep-subgrou\-poid.

\begin{defn}
Let $\Theta:X\to \Q^+$ be a branched ep-subgroupoid on  the ep-groupoid $X$.
An {\bf orientation  for $\Theta$},   denoted by $\mathfrak{o}$,   consists of an
orientation for every local branch of the tangent set
$T_xS_{\Theta}$ at every point  $x\in S_{\Theta}$ so that the
following compatibility conditions  are satisfied.
\begin{itemize}
\item[(1)]
At  every point $x$,  there
exists a local branching structure $(M_i)_{i\in I}$ where the finite dimensional submanifolds  $M_i$ of $X$
can be oriented in such a way  that the orientations of $T_xM_i$ induced from $M_i$ agree with the given ones for the local branches of the tangent set $T_xS_{\Theta}$.
\item[(2)] If  $\varphi:x\rightarrow y$ is a morphism between two points in $S_{\Theta}$, then the tangent map
 $T\varphi:T_xS_\Theta\rightarrow T_yS_\Theta$ maps every oriented branch to
 an oriented branch preserving the
orientation.
\end{itemize}
\end{defn}

After these recollections and definitions we shall formulate the main results.

\subsection{Main Results for Ep-Groupoids}
In this section we state the main results in the ep-groupoid context.
We consider a  branched ep-subgroupoid $\Theta:X\to \Q^+$ of dimension $n$ on the ep-groupoid $X$ and assume that the orbit space of $\Theta$, $S=\abs{\supp \Theta}$,  is compact. We abbreviate the orbit space of the boundary  by
$$\partial S=\abs{\supp \Theta
\cap \partial X}=\{ |x|\in S\vert \, x\in \supp \Theta \cap \partial X\}.
$$
The orbit space $\partial
S$, in general, needs not to be the boundary of $S$ in the sense of
topology. Locally the sets $S$ and $\partial S$ can be
represented as quotients by smooth group actions on finite unions of
smooth manifolds. This structure suffices to generalize the familiar
notion of the Lebesgue $\sigma$-algebra of subsets of smooth
manifolds as follows.
\begin{thm}[{\bf Canonical $\sigma$-Algebra}]\label{th0}
 The compact topological spaces $S$ and $\partial S$ possess canonical $\sigma$-algebras ${\mathcal
L}(S)$ and ${\mathcal L}(\partial S)$ of subsets containing the
Borel $\sigma$-algebras of $S$ and $\partial S$, respectively.
\end{thm}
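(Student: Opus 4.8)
The plan is to build the $\sigma$-algebra $\mathcal L(S)$ by gluing together the Lebesgue-measurable structure that exists locally on the finite-dimensional branches, checking that the gluing is consistent under the finitely many transition diffeomorphisms and the finite isotropy actions, and then verifying the result contains the Borel sets. First I would fix a point $|x|\in S$ and use Definition~\ref{def1} together with Proposition~\ref{prop1.2} to produce a local model: an open neighborhood $U(x)\subset X$ of a representative $x$, a natural representation $\varphi\colon G_x\to \dif(U(x))$, and a local branching structure $(M_i)_{i\in I}$ with $\supp\Theta\cap U(x)=\bigcup_{i\in I} M_i$, all the $M_i$ being $n$-dimensional submanifolds in good position to $\partial X$. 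On each $M_i$ there is the usual Lebesgue $\sigma$-algebra (the completion of the Borel $\sigma$-algebra with respect to any smooth measure, or intrinsically via charts), and this is invariant under sc-diffeomorphisms since those are in particular $C^1$-diffeomorphisms of the underlying finite-dimensional manifolds. Declare a subset $A\subset \abs{\supp\Theta\cap U(x)}$ to be measurable if its preimage in each $M_i$ under the projection $M_i\to \abs{\supp\Theta\cap U(x)}$ lies in the Lebesgue $\sigma$-algebra of $M_i$; since $G_x$ is finite and acts by sc-diffeomorphisms permuting the branches, this notion is automatically $G_x$-invariant, so it descends to a $\sigma$-algebra $\mathcal L_{U(x)}$ on the open subset $\abs{\supp\Theta\cap U(x)}$ of $S$.

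Next I would check that these local $\sigma$-algebras are compatible on overlaps. If $|y|$ lies in the images of two local models around $x$ and $x'$, then by the third bullet of Proposition~\ref{prop1.2} any morphism between representatives is realized by exactly one of the finitely many diffeomorphisms $\varphi_g$, and such a diffeomorphism carries local branches to local branches (as recalled just after the proposition and used in the discussion of $T_xS_\Theta$); since it restricts to a $C^1$-diffeomorphism between the relevant finite-dimensional submanifolds, it preserves the Lebesgue $\sigma$-algebras. Hence a subset of the overlap is $\mathcal L_{U(x)}$-measurable iff it is $\mathcal L_{U(x')}$-measurable. Because $S=\abs{\supp\Theta}$ is compact, I can cover it by finitely many such models $\abs{\supp\Theta\cap U(x_1)},\dots,\abs{\supp\Theta\cap U(x_N)}$, and define $\mathcal L(S)$ to consist of those $A\subset S$ with $A\cap \abs{\supp\Theta\cap U(x_j)}\in \mathcal L_{U(x_j)}$ for every $j$. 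One verifies this is a $\sigma$-algebra (closure under complements and countable unions is inherited levelwise) and, by the compatibility just established, is independent of the chosen finite cover and of the chosen local branching structures. Finally, since each $M_i\to S$ is continuous and the Lebesgue $\sigma$-algebra of $M_i$ contains its Borel sets, any open subset of $S$ pulls back to an open, hence Borel, hence Lebesgue-measurable, subset of each $M_i$; therefore $\mathcal L(S)$ contains every open set and hence the Borel $\sigma$-algebra of $S$. The identical construction applied to $\supp\Theta\cap\partial X$ — whose orbit space $\partial S$ is compact and, by the good-position hypothesis in Definition~\ref{def1}(2), is locally a finite union of boundary-type finite-dimensional submanifolds carrying their own Lebesgue $\sigma$-algebras — yields $\mathcal L(\partial S)$ with the same properties.

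The main obstacle I expect is not any single analytic fact but the well-definedness bookkeeping: one must show that the local notion of measurability is genuinely independent of the local branching structure $(M_i)_{i\in I}$ (two different representations of $\supp\Theta\cap U$ as a finite union of submanifolds with weights could a priori give different $\sigma$-algebras), and that it is independent of the choice of representative $x$ in its orbit and of the size of the neighborhood $U$. The key points making this work are that any two local branching structures near a point refine a common one on a smaller neighborhood, so the submanifolds of one are locally unions of submanifolds of the other up to sets that are themselves submanifolds of lower-or-equal dimension; that the finite isotropy group acts by sc-diffeomorphisms; and that sc-diffeomorphisms between finite-dimensional submanifolds preserve Lebesgue measurability. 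Once these are in place the rest is routine, and I would relegate the detailed verification that distinct branching structures induce the same $\sigma$-algebra to a lemma, proved by passing to a common refinement.
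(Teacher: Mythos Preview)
Your overall strategy --- pull back Lebesgue measurability along the projections $M_i\to S$ and glue --- is viable in principle, but the paper takes a different and more economical route, and your compatibility step contains a genuine gap.

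The paper does not glue local Lebesgue $\sigma$-algebras. Instead it first defines when a subset $A\subset S$ has \emph{measure zero}: precisely when every preimage $A_i^\alpha\subset M_i^\alpha$ has Lebesgue measure zero. It then proves (Lemma in Section~\ref{subsec3.1}) that this notion is independent of the choice of local branching structure, and simply declares $\mathcal L(S)$ to be the smallest $\sigma$-algebra containing the Borel $\sigma$-algebra of $S$ together with all measure-zero subsets. Containment of the Borel sets is then free by construction, and the only substantive check --- independence of the null-set notion --- is strictly easier than the independence of full Lebesgue measurability that your approach would demand.

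The gap is in your overlap argument. The assertion that the extending sc-diffeomorphism ``carries local branches to local branches'' is not correct, and it is not what the paper states after Proposition~\ref{prop1.2}: what is said there (and justified in Section~2) concerns \emph{linear} branches of the tangent set $T_xS_\Theta$, not the submanifolds $M_i$ themselves. An sc-diffeomorphism $\varphi_g$ maps $M_U=\bigcup_i M_i$ to itself, but $\varphi_g(M_i)$ need not equal any single $M_j$, and it certainly need not equal a branch $N_j$ of a different branching structure. Your fallback --- that two branching structures ``refine a common one'' with branches of one being unions of branches of the other --- is likewise incorrect; no such refinement exists in general. The paper's actual mechanism for comparing two branching structures $(M_i)$ and $(N_j)$ on the same $U$ works through the pairwise intersections: one writes $\widetilde A\cap N_j=\bigcup_i(\widetilde A\cap M_i\cap N_j)$, and transfers the null-set property from $M_i$ to $N_j$ using that the identity on $M_i\cap N_j$ extends to a smooth map between open neighborhoods in $M_i$ and $N_j$ (the machinery of Section~\ref{essentialp}, in particular Proposition~\ref{prop2.9} and Lemma~\ref{lem0}, is what underlies this). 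You would need the same intersection-and-extension argument, not a refinement or a branch-to-branch correspondence, to make your direct approach go through.
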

The sets belonging to ${\mathcal L}(S)$ and ${\mathcal L}(\partial S)$
are  called measurable. The precise definition of the $\sigma$-algebras will
be given in the later construction. The canonical $\sigma$-algebras for $\Theta$
and  $\Theta^i$ are identical.

From
\cite{HWZ1} and \cite{HWZ3} we know that if $X$ is an ep-groupoid,
then the tangent space  $TX$ is again an ep-groupoid. Recall that a
morphism $\varphi\in {\bf X}_{1}$ possesses a well-defined tangent
morphism $T\varphi\in {\bf TX}$. In  the following, we  denote by
$\oplus_kTX\rightarrow X^1$  the Whitney sum of $k$-many copies of
$TX$.

At this point we  would like to make some comments about our  notation.
For us the tangent bundle of $X$ is $TX\rightarrow X^1$,  that is, it is  only defined for the base points in $X^1$. An   {\bf sc-vector field} on $X$  is an  sc-smooth section $A$ of  the tangent bundle $TX\rightarrow X^1$ and hence it is  defined on $X^1$. Similarly, an  sc-differential form  on $X$ which we will define next,  is  only defined over the base points in $X^1$. The definition of a vector field and the following  definition of an sc-differential form
are justified since the construction of $TX$, though only defined over $X^1$, requires the knowledge of $X$.
\begin{defn}
An {\bf $\boldsymbol{\ssc}$-differential ${\boldsymbol{k}}$-form on the $M$-polyfold $X$}  is
an sc-smooth map $\omega:\oplus_k TX \to \R$ which is
linear in each argument separately, and skew symmetric.   In addition,  we require that
$$(T\varphi)^\ast\omega_y=\omega_x$$
when  $\varphi:x\rightarrow y$ is in ${\bf X}_1.$
\end{defn}

If $\omega$ is an  sc-differential form on $X$,  we may also  view it  as an  sc-differential form on $X^i$.  Denote by $\Omega^\ast(X^i)$ the graded commutative algebra of sc-differential forms on $X^i$. Then we have the inclusion map
$$
\Omega^\ast(X^i)\rightarrow\Omega^\ast(X^{i+1}).
$$
which  is injective since $X_{i+1}$ is dense in $X_i$ and the forms are sc-smooth. Hence we have a directed system whose  direct limit is denoted by
$\Omega^\ast_\infty(X)$. An element $\omega$ of  degree $k$ in $\Omega^\ast_\infty(X)$ is a skew-symmetric map $\oplus_k(TX)_\infty\rightarrow {\mathbb R}$ such that it has an  sc-smooth extension to an  sc-smooth k-form
$\oplus_kTX^i\rightarrow {\mathbb R}$ for some $i$. We shall refer to an element of $\Omega^k_\infty(X)$ as an  sc-smooth differential form on $X_\infty$. We note, however, that it is part of the structure that the $k$-form is defined and sc-smooth on some $X^i$.

Next we associate with an sc-differential $k$-form $\omega$ its exterior differential
$d\omega$ which is a $(k+1)$-form on the ep-groupoid $X^1$. The compatibility with morphisms will be essentially an automatic consequence of the definition. Hence it suffices for the moment to consider the case that $X$ is an M-polyfold.
Let $A_0, \ldots , A_k$ be $k+1$ many sc-smooth vector fields on $X$. As discussed in Appendix \ref{poly}, the Lie bracket $[A_i, A_j]$ is a well-defined vector field on $X^1$. We define
$d\omega$ on $X^1$, using the familiar  formula,   by
\begin{equation*}
\begin{split}
d\omega (A_0, \ldots ,A_k)&=\sum_{i=0}^k(-1)^iD(\omega (A_0, \ldots , \wh{A}_i, \ldots , A_k))\cdot A_i\\
&+\sum_{i<j}(-1)^{(i+j)}\omega ([A_i, A_j], A_0, \ldots , \wh{A}_i,\ldots ,\wh{A}_j, \ldots ,A_k).
\end{split}
\end{equation*}
The right-hand side of the formula above only makes sense at the base points $x\in X_2$. This  explains why  $d\omega$ is a $(k+1)$-form on $X^1$.
By the previous discussion the differential $d$ defines a map
$$
d:\Omega^k(X^i)\rightarrow \Omega^{k+1}(X^{i+1})
$$
and consequently induces a map
$$
d:\Omega^\ast_\infty(X)\rightarrow \Omega^{\ast+1}_\infty(X)
$$
having the usual property $d^2=0$. Then $(\Omega^\ast_\infty(X),d)$ is a  graded differential algebra which we shall call the de Rham complex.

If $\varphi:M\to X$ is an sc-smooth map from a finite-dimensional manifold $M$ into an M-polyfold $X$, then it induces an  algebra homomorphism
 $$
 \varphi^\ast:\Omega^\ast_\infty(X)\rightarrow \Omega^\ast(M)_\infty
 $$
 satisfying
$$d (\varphi^*\omega )=\varphi^*d\omega.$$

To formulate the  next theorem we recall the natural representation $\varphi:G_x\to \text{Diff}_{\ssc}(U)$ of the isotropy group $G_x=G$. This group can  contain a subgroup  acting trivially on $U$. Such a subgroup is a normal subgroup of $G$ called the ineffective part of $G$ and denoted by $G_0$. The  effective part of $G$ is the quotient group
$$G_{\text{e}}:=G/G_0.$$
We  denote  the order of $G_{\text{e}}$ by $\sharp G_{\text{e}}$.

\begin{thm}[{\bf Canonical Measures}]\label{th1}
Let $X$ be an ep-groupoid and assume that $\Theta:X\to \Q^+$ is an oriented branched
ep-subgroupoid of dimension $n$ whose orbit space $S=\abs{\supp \Theta}$  is compact and equipped with the weight function
$\vartheta:S\to \Q^+$ defined by
$$
\vartheta (|x|):=\Theta (x),\quad |x|\in S.
$$
Then there exists a map
$$
\Phi_{(S,\vartheta)}:\Omega^n_\infty(X)\rightarrow {\mathcal M}(S,{\mathcal
L}(S)),\quad \omega \mapsto    \mu_\omega^{(S,\theta)}
$$
which  associates to every  sc-differential $n$-form $\omega$ on $X_\infty$ a signed finite measure
$$\mu_{\omega}^{(S, \vartheta )}\equiv \mu_{\omega}$$
on  the canonical measure space $(S,{\mathcal L}(S))$. This map is uniquely characterized by the
following properties.
\begin{itemize}
\item[(1)] The map $\Phi_{(S,\theta)}$ is linear.
\item[(2)] If $\alpha=f\tau$ where $f\in\Omega^0_\infty(X)$ and $\tau\in\Omega^{n}_\infty(X)$, then
$$
\mu_\alpha(K)=\int_K
fd\mu_\tau
$$
for  every set $K\subset S$ in the $\sigma$-algebra ${\mathcal L}(S)$.
\item[(3)] Given a  point $x\in \supp \Theta$ and an oriented branching structure
${(M_i)}_{i\in I}$  with associated weights $(\sigma_i)_{i\in I}$ on the open neighborhood  $U$ of $x$ according to Definition \ref{def1}, then  for every set  $K\in {\mathcal L}(S)$ contained in a compact subset of $\abs{\supp \Theta \cap U}$,  the $\mu_{\omega}$-measure of $K$ is given by the formula
$$
\mu_\omega (K)=\frac{1}{\sharp G_{e}}\sum_{i\in I}
\sigma_i\int_{K_i}\omega\vert M_i
$$
where $K_i\subset M_i$ is the preimage of $K$ under  the projection map
$M_i\to \abs{\supp \Theta \cap U}$ defined by $x\rightarrow |x|$.
\end{itemize}
\end{thm}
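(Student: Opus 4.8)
The plan is to construct $\mu_\omega$ first locally, on the orbit-space neighborhoods $\abs{\supp\Theta\cap U}$ furnished by Definition \ref{def1} and Proposition \ref{prop1.2}, and then to glue the local pieces together with an invariant partition of unity; the characterizing properties (1)--(3) and the uniqueness are then read off from the construction. For the local step, fix $x\in\supp\Theta$, a natural representation $\varphi\colon G_x\to\dif(U)$ on a neighborhood $U$, and an oriented local branching structure $(M_i,\sigma_i)_{i\in I}$ on $U$. Each $M_i$ is an oriented $n$-dimensional submanifold with boundary with corners, so $\omega\vert M_i$ is a smooth top-degree form and hence determines a signed Borel measure $\nu_i$ on $M_i$ by $\nu_i(B)=\int_B\omega\vert M_i$. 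Since the inclusions $M_i\hookrightarrow U$ are proper (Definition \ref{def1}(4)), the projections $\pi_i\colon M_i\to\abs{\supp\Theta\cap U}$, $y\mapsto\abs{y}$, are proper onto their images, so for any $K$ contained in a compact subset of $\abs{\supp\Theta\cap U}$ the preimage $K_i=\pi_i^{-1}(K)$ is compact and $\nu_i(K_i)$ is finite. I then set
\[
\mu^U_\omega(K)=\frac{1}{\sharp G_e}\sum_{i\in I}\sigma_i\,\nu_i(K_i),
\]
which is forced by property (3). One checks directly that this is countably additive, hence a signed finite Borel measure on $\abs{\supp\Theta\cap U}$; since each $\nu_i$ is locally a smooth density times Lebesgue measure, it extends to the trace on $\abs{\supp\Theta\cap U}$ of the canonical $\sigma$-algebra $\mathcal L(S)$ of Theorem \ref{th0}.

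The decisive step is to show that $\mu^U_\omega$ is intrinsic: independent of the chosen oriented branching structure, and agreeing with a second such local measure on the overlap of their domains. The ingredients are (i) the morphism-invariance $(T\psi)^\ast\omega_y=\omega_x$ of $\omega$, so that $\varphi_g^\ast\omega=\omega$ for $g\in G_x$; (ii) the orientation compatibility --- condition (2) in the definition of an orientation --- which says $T\varphi_g$ takes each oriented local branch to an oriented local branch preserving orientation; and (iii) the earlier fact that the tangent set $T_yS_\Theta$ with its branch multiplicities is independent of the branching structure, which together with local uniqueness of the branches as germs of $n$-submanifolds pins down the branches up to enumeration and refinement. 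By (i)--(ii) the $G_x$-action on $\supp\Theta\cap U$ permutes the branches setwise and carries the integrand $\sigma_i\,\omega\vert M_i$ to the corresponding one on the image branch (the weights of branches in one $G_x$-orbit coinciding because $\Theta$ is a functor), while $\abs{\supp\Theta\cap U}$ is the quotient of $\supp\Theta\cap U$ by the $\sharp G_e$-element group $G_e$; combining these, $\sum_i\sigma_i\int_{K_i}\omega\vert M_i$ equals $\sharp G_e$ times an integral over $K$ that depends only on $\omega$ and the weight function $\vartheta$, which is exactly what the normalization $1/\sharp G_e$ extracts, and this also makes the sum insensitive to shrinking $U$ and to the choice of natural representation. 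The genuine obstacle --- the technical heart of the proof --- is to push this bookkeeping through the points of $\supp\Theta\cap U$ where several branches meet and where the isotropy jumps (so that $\sharp G_e$ is not locally constant); here one uses the upper semicontinuity of isotropy to check that the weighted contributions of the branches reassemble correctly after division by $\sharp G_e$.

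For the globalization, cover the compact space $S=\abs{\supp\Theta}$ by finitely many $\abs{\supp\Theta\cap U_\alpha}$ as above and choose an sc-smooth partition of unity $\{\beta_\alpha\}$ near $\supp\Theta$ subordinate to the $U_\alpha$ and invariant under the local isotropy actions --- available because the sc-structure is based on separable sc-Hilbert spaces --- so that the $\beta_\alpha$ descend to functions on $S$. Set $\mu_\omega=\sum_\alpha\mu^{U_\alpha}_{\beta_\alpha\omega}$ on Borel sets, extended to $\mathcal L(S)$ as before. Linearity in $\omega$, property (1), is immediate from the formula. For property (2), if $\alpha=f\tau$ with $f\in\Omega^0_\infty(X)$, then $f$ is morphism-invariant and descends to a function on $S$, and $(f\tau)\vert M_i=(f\vert M_i)(\tau\vert M_i)$ gives $\nu^{f\tau}_i(B)=\int_B f\,d\nu^{\tau}_i$; summing and normalizing yields $\mu_{f\tau}(K)=\int_K f\,d\mu_\tau$, first locally and then globally. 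Using (2) to absorb the $\beta_\alpha$, one checks that for $K$ contained in a compact subset of a single $\abs{\supp\Theta\cap U}$ the global $\mu_\omega(K)$ collapses to the right-hand side of (3); so the construction has all three properties. Conversely, (3) forces the value of any such map on these local $K$, and then writing $\omega=\sum_\alpha\beta_\alpha\omega$ and moving each $\beta_\alpha$ inside via (1)--(2) forces its value on every $K\in\mathcal L(S)$; hence $\Phi_{(S,\vartheta)}$ is unique, and in particular independent of the auxiliary cover and partition of unity.
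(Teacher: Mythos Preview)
Your global architecture---define $\mu^U_\omega$ by the formula forced by (3), then glue with an invariant partition of unity---matches the paper. The gap is in the step you yourself flag as the technical heart: showing that $\mu^U_\omega$ is independent of the chosen branching structure and compatible with restriction to a smaller neighborhood centered at a different point. Your argument for this rests on two assertions that are not justified and in general not true. First, ``the $G_x$-action on $\supp\Theta\cap U$ permutes the branches setwise'': $\varphi_g$ maps $M_U$ to itself, but $\varphi_g(M_i)$ need not coincide with any $M_j$ from the \emph{given} branching structure---it is merely a branch of some other branching structure. Second, ``local uniqueness of the branches as germs of $n$-submanifolds pins down the branches up to enumeration and refinement'': the tangent set $T_yS_\Theta$ is intrinsic, but it carries no multiplicities, and two branching structures $(M_i,\sigma_i)$ and $(N_j,\tau_j)$ can have entirely different branches through a given point. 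Comparing $\sum_i\sigma_i\int_{K_i}\omega\vert M_i$ with $\sum_j\tau_j\int_{L_j}\omega\vert N_j$ is therefore not bookkeeping; it requires a mechanism to transport the integrand from $M_i$ to $N_j$ on their common support, which in general is a bad set inside each.

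The paper supplies this mechanism via the theory of essential points (Section~\ref{essentialp}). One combines the two branching structures into a third with halved weights and index set $I''=I\cup J$, partitions $M_U$ by the maps $x\mapsto(I''_x,P''_x)$, and discards the pieces where the partition $P''_x$ has more than one class---these have measure zero by Lemma~\ref{lemA}, which in turn rests on Lemma~\ref{lem0}. On each remaining piece every pair of branches through a point is $(\cdot,\mathrm{id},\cdot)$-essential, and Proposition~\ref{prop2.9} produces a smooth map between the branches extending the identity on their intersection with matching tangent; only then does $\int_{K_i}\omega\vert M_i=\int_{L_j}\omega\vert N_j$ follow, and the equality of weighted sums comes from $\sum_{i\in I_K}\sigma_i=\vartheta=\sum_{j\in J_K}\tau_j$ (Lemma~\ref{lemB}). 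Likewise, your appeal to ``upper semicontinuity of isotropy'' for the restriction step is a placeholder: when one passes from $U=U(z_0)$ with isotropy $G_{z_0}$ to $V=V(x)\subset U$ with isotropy $G_x$, the normalizing constant changes from $1/\sharp(G_{z_0})_e$ to $1/\sharp(G_x)_e$, and one must check these give the same measure. The paper's Lemma~\ref{lemrest} does this by decomposing $G_{z_0}$ into cosets of the subgroup corresponding to $G_x$, transporting the branches by the coset representatives, and invoking the equality of the ineffective parts $\abs{(G_x)_0}=\abs{(G_{z_0})_0}$.
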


In the theorem above we denoted  by $\int_{K_i}\omega\vert M_i$ the signed measure of the set
$K_i$ with respect to the Lebesgue signed measure associated with the smooth $n$-form $\omega\vert M_i$ on the finite dimensional manifold $M_i$. It is given by
$$\int_{K_i}\omega\vert M_i=\lim_k\int_{U_k}j^*\omega$$
where $j:M_i\to X$  is the inclusion map and $(U_k)$ is  a decreasing sequence of open neighborhoods of $K_i$ in $M_i$ satisfying $\bigcap_k U_k=K_i$.

The analogous result holds true for the orbit space $\partial S=\{|x|\in S\vert \,  x\in \supp \Theta\cap \partial X\}$ of the boundary.

\begin{thm}[ {\bf Canonical Boundary  Measures}] Under the same assumptions
as in Theorem \ref{th1} there exists a map
$$
\Phi_{(\partial S,\vartheta)}:\Omega^{n-1}_\infty(X)\rightarrow {\mathcal
M}(\partial S,{\mathcal L}(\partial S)), \quad \tau\mapsto
\mu_\tau^{(\partial S,\vartheta)}
$$
which assigns to every sc-differential $(n-1)$-form $\tau$ on $X_\infty$ a signed finite measure
$$\mu_{\tau}^{(\partial S, \vartheta)}\equiv \mu_{\tau}$$
on  the canonical measure  space
$(\partial  S, {\mathcal L} (\partial S))$.
 This map is uniquely characterized by the
following properties.
\begin{itemize}
\item[(1)] The map $\Phi_{(\partial S,\vartheta )} $ is linear.
\item[(2)] If $\alpha=f\tau$ where  $f\in\Omega^0_\infty(X)$ and $\tau\in\Omega^{n-1}_\infty(X)$, then every $K\in {\mathcal L}(\partial S)$
has the $\mu_{\alpha}$-measure
$$\mu_\alpha(K)=\int_K
fd\mu_\tau.$$
\item[(3)] Given a  point $x\in \supp \Theta \cap \partial X$  and an oriented branching structure  ${(M_i)}_{i\in I}$ with weights $(\sigma_i)_{i\in I}$ on the open neighborhood $U\subset X$ of $x$, then the measure of  $K\in {\mathcal L}(\partial S)$  contained in a compact subset of $\abs{\supp \Theta \cap U\cap \partial X}$ is given by the formula
$$
\mu_\tau(K)=\frac{1}{\sharp G_{e}}\sum_{i\in I}
\sigma_i\int_{K_i}\tau \vert \partial M_i
$$
where $K_i\subset  \partial M_i$ is the preimage of $K$ under the projection map  $\partial M_i\to \abs{\supp \Theta \cap U\cap \partial X}$ defined by $x\mapsto |x|$.
\end{itemize}
\end{thm}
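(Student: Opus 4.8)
The plan is to deduce the statement from Theorem \ref{th1} --- more precisely, from the construction underlying its proof --- applied to the ``boundary'' of the branched ep-subgroupoid $\Theta$. First I would record that, on each neighborhood $U$ as in Definition \ref{def1}, the data
$$\bigl(\ \supp\Theta\cap\partial X\cap U,\ (\partial M_i)_{i\in I},\ (\sigma_i)_{i\in I}\ \bigr)$$
behaves exactly like the support of an oriented branched ep-subgroupoid of dimension $n-1$. Indeed, since every local branch $M_i$ is a finite dimensional submanifold of $X$ in good position to $\partial X$, the set $\partial X\cap M_i$ equals the manifold boundary $\partial M_i$, which is a finite dimensional submanifold of $X$ of dimension $n-1$, again in good position to the corner strata; the inclusions $\partial M_i\hookrightarrow U$ remain proper; and
$$\supp\Theta\cap\partial X\cap U=\bigcup_{i\in I}\partial M_i$$
(branches with $\partial M_i=\emptyset$ being discarded). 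Because the degeneracy index is morphism invariant, $d(x)=d(y)=d(g)$, the natural representation $\varphi:G_x\to\dif(U)$ restricts to the boundary part of $U$, preserving $\supp\Theta\cap\partial X$ and the weight function $\Theta$; thus axioms (1)--(5) of Definition \ref{def1} hold for the boundary data. Finally $\partial S=\abs{\supp\Theta\cap\partial X}$ is a closed, hence compact, subset of $S$, since $\partial X$ is closed in $X$.

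Next I would fix the induced orientation via the outward--normal--first convention: for $x\in\supp\Theta\cap\partial X$ and a local branch $M_i$ through $x$, orient $T_x\partial M_i$ by declaring $(e_1,\dots,e_{n-1})$ positive precisely when $(\nu,e_1,\dots,e_{n-1})$ is positive in $T_xM_i$, where $\nu\in T_xM_i$ points outward. This is independent of the chosen local branching structure, since the outward direction is determined by the ambient M-polyfold together with the given orientation of the branch; and it is compatible with morphisms, because $T\varphi$ maps oriented branches of $T_xS_\Theta$ to oriented branches of $T_yS_\Theta$ preserving orientation and carries outward directions to outward directions. Hence the boundary data is an \emph{oriented} branched structure of dimension $n-1$, to which Theorem \ref{th0} associates the canonical $\sigma$-algebra $\mathcal L(\partial S)$.

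Now I would run the construction of Theorem \ref{th1} verbatim, with $M_i$ replaced by $\partial M_i$, $n$ by $n-1$, $S$ by $\partial S$, and the $n$-form $\omega$ on $X_\infty$ replaced by the pullback of the given $(n-1)$-form $\tau$ under the inclusions $\partial M_i\hookrightarrow X$. That $\partial X$ is not itself an M-polyfold is immaterial: the construction uses only the genuine finite dimensional manifold structure of the local branches, their weights, the finite isotropy group actions through the natural representations, the fixed orientations, and the compactness of the orbit space --- all of which we have just arranged. This produces a signed finite measure $\mu_\tau^{(\partial S,\vartheta)}$ on $(\partial S,\mathcal L(\partial S))$, and properties (1), (2), (3) of the statement are exactly the translations of the corresponding properties in Theorem \ref{th1}; in particular property (3) reads $\mu_\tau(K)=\frac{1}{\sharp G_{e}}\sum_{i\in I}\sigma_i\int_{K_i}\tau\vert\partial M_i$. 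Uniqueness is proved as in Theorem \ref{th1}: property (3) determines the measure on sets contained in a single chart, and properties (1) and (2), together with a smooth partition of unity subordinate to a finite cover of the compact space $\partial S$ (available since the sc-structure is modelled on separable sc-Hilbert spaces), propagate this to all of $\mathcal L(\partial S)$.

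The only step requiring genuine care --- and the expected main obstacle --- is the boundary bookkeeping of the first two paragraphs: verifying that ``good position to $\partial X$'' is precisely what turns $\bigcup_{i}\partial M_i$ into a local branching structure of dimension $n-1$, and, above all, that the outward--normal--first orientation on the $\partial M_i$ is well defined independently of the local branching structure and compatible with morphisms, including at corner points where $d(x)\ge 2$ and several boundary faces of a branch meet. Once this is settled, the measure-theoretic content is identical to that of Theorem \ref{th1}.
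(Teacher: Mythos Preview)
Your proposal is correct and follows the same route the paper intends: the paper does not give a separate proof of this theorem but, after constructing $\mathcal L(\partial S)$ at the end of Section~\ref{subsec3.1} (noting that near a corner of degeneracy $d\ge 2$ the boundary $\partial M_i$ decomposes into $d$ faces whose pairwise intersections have $(n-1)$-dimensional measure zero), it simply treats the boundary measure as the evident analogue of Theorem~\ref{th1} with $M_i$ replaced by $\partial M_i$ and uses it without further comment in the proof of Stokes' theorem. Your write-up is in fact more explicit than the paper's; the one place where you should adjust the wording is the claim that $\partial M_i$ is itself ``a finite dimensional submanifold of $X$ of dimension $n-1$ in good position'': at corner points this is not literally true, and the correct statement --- which you yourself flag at the end --- is that $\partial M_i$ is a union of $(n-1)$-dimensional faces meeting along measure-zero sets, which is all the construction of Section~\ref{subsect3.2} needs.
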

Finally,  the following   version of Stokes' theorem holds.

\begin{thm}[{\bf Stokes Theorem}]\label{thmst0}
Let $X$ be   an ep-groupoid and let  $\Theta:X\to \Q^+$ be an oriented $n$-dimensional
branched ep-subgroupoid of $X$  whose orbit space  $S=\abs{\supp \Theta}$ is compact. Then,
for every  sc-differential $(n-1)$-form $\omega$ on $X_\infty$,
$$
\mu^{(S,\vartheta)}_{d\omega}(S)=\mu^{(\partial S,\vartheta)}_{\omega}(\partial
S),
$$
or alternatively,
$$\int_{(S, \vartheta)}d\omega =\int_{(\partial S, \vartheta)}\omega.$$
\end{thm}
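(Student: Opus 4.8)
The plan is to reduce the global statement to the classical Stokes' theorem on finite-dimensional manifolds with corners, using the measure-theoretic machinery already set up in Theorems \ref{th1} and the boundary version. First I would observe that both sides of the asserted identity are defined via the canonical measures $\mu_{d\omega}^{(S,\vartheta)}$ and $\mu_\omega^{(\partial S,\vartheta)}$, and that these measures are \emph{local} in the sense made precise by property (3) in Theorem \ref{th1}: on the preimage of a small neighborhood $\abs{\supp\Theta\cap U}$ they are computed branch-by-branch as a weighted sum $\frac{1}{\sharp G_e}\sum_{i\in I}\sigma_i\int_{K_i}\omega\vert M_i$. So the first step is to choose a finite open cover of the compact orbit space $S$ by sets of the form $\abs{\supp\Theta\cap U(x_j)}$, each carrying an oriented local branching structure $(M_i^j)_{i\in I_j}$ with weights $(\sigma_i^j)$, and a subordinate smooth partition of unity $\{\beta_j\}$ on $S$. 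Pulling the $\beta_j$ back to $X$ (using sc-smooth functions constant on orbits, which exist because the sc-structure is built on separable sc-Hilbert spaces so partitions of unity are available) and writing $\omega=\sum_j \beta_j\omega$, linearity (property (1)) reduces the theorem to the case where $\omega$ is supported, after passing to orbits, in a single chart neighborhood $U$ — i.e.\ to a purely local statement.

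The second, computational step is the local case. With $\omega$ supported in $U$, property (3) of Theorem \ref{th1} gives
\[
\mu_{d\omega}^{(S,\vartheta)}(S)=\frac{1}{\sharp G_e}\sum_{i\in I}\sigma_i\int_{M_i} d(j_i^*\omega),
\]
where $j_i:M_i\to X$ is the inclusion, while property (3) of the Canonical Boundary Measures theorem gives
\[
\mu_\omega^{(\partial S,\vartheta)}(\partial S)=\frac{1}{\sharp G_e}\sum_{i\in I}\sigma_i\int_{\partial M_i} (j_i^*\omega)\vert\partial M_i.
\]
Here one uses that $d$ commutes with the pullback $j_i^*$, as recorded after the definition of the de Rham complex. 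Since each $M_i$ is a finite-dimensional submanifold with boundary with corners (Definition 4.20 of \cite{HWZ2}), the inclusion maps $M_i\to U$ are proper by Definition \ref{def1}(4), and $\omega$ is compactly supported on $M_i$, the classical Stokes' theorem for manifolds with corners applies termwise:
\[
\int_{M_i} d(j_i^*\omega)=\int_{\partial M_i} j_i^*\omega.
\]
Summing over $i$ with the weights $\sigma_i$ and dividing by $\sharp G_e$ yields the desired equality in the local case. One must check that the boundary orientations on $\partial M_i$ induced by the chosen orientations of $M_i$ are exactly the ones used to define $\mu_\omega^{(\partial S,\vartheta)}$ — this is guaranteed by the compatibility clauses in the definition of an orientation of $\Theta$, together with the fact (stated in the excerpt) that the degeneracy index and hence the corner structure descend to orbits and are respected by morphisms.

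The main obstacle, and the step deserving the most care, is the \emph{well-definedness across overlaps}: I must verify that the partition-of-unity reduction does not depend on the choices of chart neighborhoods, local branching structures, or orientations on the overlaps $U(x_j)\cap U(x_k)$, so that the termwise application of classical Stokes' theorem assembles into a consistent global identity. This is where the uniqueness characterization of the canonical measures does the real work: because $\Phi_{(S,\vartheta)}$ and $\Phi_{(\partial S,\vartheta)}$ are \emph{uniquely} determined by properties (1)--(3), the local computations automatically glue, and in particular the value $\frac{1}{\sharp G_e}\sum_i\sigma_i\int$ is independent of the local branching structure (as asserted in the discussion of the tangent set $T_xS_\Theta$ being structure-independent). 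A secondary technical point is the passage to the limit implicit in $\int_{K_i}\omega\vert M_i=\lim_k\int_{U_k}j^*\omega$: one should confirm that Stokes' theorem is compatible with this exhaustion, which follows from dominated convergence since $\omega$ and $d\omega$ are sc-smooth, hence bounded on the compact supports in question. Once these consistency checks are in place, linearity plus the local identity complete the proof.
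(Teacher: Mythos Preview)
Your proposal is correct and follows essentially the same approach as the paper: cover $S$ by finitely many charts carrying local branching structures, use an sc-smooth partition of unity compatible with morphisms to write $\omega=\sum_k\beta_k\omega$, and then apply the classical Stokes' theorem on each oriented branch $M_i^k$ to the compactly supported form $\omega_k=\beta_k\omega$, summing with the weights $\sigma_i^k$ and the factor $1/\sharp G_e^k$. The paper's own proof is in fact shorter because it leans entirely on the measure-theoretic groundwork already established (Lemmas \ref{lemB}, \ref{lemrest}, \ref{lemD}, and Lemma \ref{lemma3.8a}) and does not re-argue the well-definedness across overlaps that you spell out; your discussion of that point is accurate but redundant given Theorem \ref{th1}.
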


To demonstrate the  compatibility of the construction with
equivalences between ep-groupoids,  we first introduce the notion of  an equivalence and refer the reader to  \cite{HWZ3} for more details.

\begin{defn}[{\bf Equivalence}]
An sc-smooth functor $F:X\to Y$ between two ep-groupoids is called an {\bf equivalence}  if it possesses the following properties.
\begin{itemize}
\item[(1)] $F$ is a local sc-diffeomorphism on objects as well as on
morphisms.
\item[(2)] The induced map $\abs{F}:\abs{X}\to \abs{Y}$ between the
orbit spaces is an $\ssc$-homeomorphism.
\item[(3)] For every $x\in X$,  the map $F$ induces a bijection
$G_x\rightarrow G_{F(x)} $ between the isotropy groups.
\end{itemize}
\end{defn}

Because  an equivalence $F:X\to Y$ is a local sc-diffeomorphism, we conclude for the degeneracy indices of the ep-groupoids $X$ and $Y$ that
$$d_X(x)=d_Y(F(x))$$
for every $x\in X$.
\begin{thm}[{\bf Equivalences}]\label{th4}
Assume that  $F:X\to  Y$ is an equivalence between the
ep-groupoids $X$ and $Y$.  Assume that  $\Theta:Y\rightarrow {\mathbb Q}^+$ is  an
oriented $n$-dimensional branched ep-subgroupoid of $Y$  whose orbit
space $S=\abs{\supp \Theta}$ is compact and equipped with the weight
function $\vartheta:S\to \Q^+$ defined by $\vartheta (|y|)=\Theta
(y)$ for $|y|\in S$. Define the $n$-dimensional branched
ep-subgroupoid  on $X$ by $\Theta':=\Theta\circ F:X\to \Q^+$ and denote by $S'$ and $\vartheta'$  the associated orbit space   and the  weight function on $S'$. Moreover, assume that
$\Theta'$ is equipped with the induced orientation. Then, for every
sc-differential $n$-form $\omega$ on $Y_\infty$,
$$
\mu_{\omega}^{(S,\vartheta)}\circ \abs{F} = \mu_{\omega'}^{(S',\vartheta')}
$$
where the $n$-form $\omega'$ on $X_\infty$ is the pull back form $\omega'=F^*\omega$.
Similarly,
$$
\mu_{\tau}^{(\partial S,\vartheta)}\circ \abs{F}=
\mu_{\tau'}^{(\partial S',\vartheta')}
$$
for every $(n-1)$-form $\tau$ on $Y_\infty$.
\end{thm}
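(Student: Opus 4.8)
The plan is to deduce Theorem~\ref{th4} from Theorem~\ref{th1} and its boundary counterpart, after first checking that $\Theta'=\Theta\circ F$ really is an oriented $n$-dimensional branched ep-subgroupoid whose local branching structures are exactly the $F$-preimages of those of $\Theta$. First I would note that $\Theta'$ is a functor $X\to\Q^+$ (a composite of functors) with $\supp\Theta'=F^{-1}(\supp\Theta)$, and that $\supp\Theta'\subset X_\infty$ since $F$, being a local sc-diffeomorphism, preserves levels. Given $x\in\supp\Theta'$, let $U=U(F(x))\subset Y$ be a neighbourhood of $F(x)$ as in Definition~\ref{def1}, with local branches $(M_i)_{i\in I}$ and weights $(\sigma_i)_{i\in I}$, and choose a neighbourhood $V$ of $x$ so small that $F|_V\colon V\to F(V)\subset U$ is an sc-diffeomorphism and $V$ carries the natural representation of $G_x$ furnished by Proposition~\ref{prop1.2}. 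Then $N_i:=(F|_V)^{-1}(M_i\cap F(V))$ are $n$-dimensional submanifolds of $X$ with proper inclusions into $V$, in good position to $\partial X$ because $d_X=d_Y\circ F$; with $\sigma_i':=\sigma_i$ one gets $\Theta'(y)=\Theta(F(y))=\sum_{\{i\,:\,y\in N_i\}}\sigma_i'$ for $y\in\supp\Theta'\cap V$, so $(N_i)_{i\in I},(\sigma_i')_{i\in I}$ is a local branching structure for $\Theta'$ in $V$. Since $|F|\colon|X|\to|Y|$ is a homeomorphism, $S'=|F|^{-1}(S)$ is compact, $\partial S'=|F|^{-1}(\partial S)$ (again by $d_X=d_Y\circ F$), and $\vartheta'(|x|)=\Theta'(x)=\vartheta(|F|(|x|))$. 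Finally, because $F$ induces isomorphisms $G_x\to G_{F(x)}$ and conjugates the natural representations, $\varphi^X_g=(F|_V)^{-1}\circ\varphi^Y_{F(g)}\circ(F|_V)$, the ineffective parts correspond and $\sharp G^X_{\mathrm e}=\sharp G^Y_{\mathrm e}$ at $x$ and $F(x)$.

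Next I would pin down the induced orientation. The isomorphism $TF_x\colon T_xX\to T_{F(x)}Y$ carries each local branch of $T_xS_{\Theta'}$ onto a local branch of $T_{F(x)}S_\Theta$ (since $F$ maps $N_i$ diffeomorphically onto $M_i\cap F(V)$), so $\mathfrak{o}'$ is defined by pulling back $\mathfrak{o}$ through $TF$; equivalently, one orients each $N_i$ so that $F|_{N_i}$ is orientation preserving when $M_i$ carries the orientation furnished by $\mathfrak{o}$. Conditions (1) and (2) in the definition of an orientation are then inherited from those for $\mathfrak{o}$: for (2) one uses that $F$, being a functor and a local sc-diffeomorphism, intertwines the local extensions $t\circ s^{-1}$ of morphisms, so that $TF_y\circ T\psi=T(F\psi)\circ TF_x$ for a morphism $\psi\colon x\to y$ in ${\bf X}_1$, whence $T\psi$ preserves $\mathfrak{o}'$ because $T(F\psi)$ preserves $\mathfrak{o}$. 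The point to retain is that, with these orientations, each $F|_{N_i}$ is an orientation-preserving diffeomorphism onto $M_i\cap F(V)$.

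Now fix $\omega\in\Omega^n_\infty(Y)$ and set $\omega'=F^*\omega\in\Omega^n_\infty(X)$. Both $\mu_{\omega'}^{(S',\vartheta')}$ and the set function $\nu(K'):=\mu_\omega^{(S,\vartheta)}(|F|(K'))$ are signed finite measures on $(S',\mathcal{L}(S'))$; for $\nu$ this uses that the homeomorphism $|F|\colon S'\to S$ is in every chart induced by an sc-diffeomorphism, so that by the construction of the canonical $\sigma$-algebras underlying Theorem~\ref{th0} it carries $\mathcal{L}(S')$ bijectively onto $\mathcal{L}(S)$. By compactness of $S'$ choose finitely many open sets $O_l\subset S'$ covering $S'$ with each $\overline{O_l}$ a compact subset of some $|\supp\Theta'\cap V_l|$, where $V_l\subset X$ carries a local branching structure $(N^l_i)_{i\in I_l},(\sigma^l_i)_{i\in I_l}$ constructed as above from a local branching structure $(M^l_i)_{i\in I_l},(\sigma^l_i)_{i\in I_l}$ of $\Theta$ on $U_l:=F(V_l)$. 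For $K'\in\mathcal{L}(S')$ write $K'=\bigsqcup_l E_l$ with $E_l:=(K'\cap O_l)\setminus\bigcup_{m<l}O_m$, so $E_l$ lies in the compact set $\overline{O_l}\subset|\supp\Theta'\cap V_l|$ and $|F|(E_l)$ lies in the compact set $|F|(\overline{O_l})\subset|\supp\Theta\cap U_l|$. Applying property~(3) of Theorem~\ref{th1} on the $X$-side to $\mu_{\omega'}^{(S',\vartheta')}(E_l)$ and on the $Y$-side to $\mu_\omega^{(S,\vartheta)}(|F|(E_l))$ gives two sums over the same index set $I_l$ with the same weights and, by the first paragraph, the same prefactor $1/\sharp G_{\mathrm e}$, while term by term, with $(E_l)_i\subset N^l_i$ and $(|F|E_l)_i\subset M^l_i$ the respective preimages under the orbit projections,
\[
\int_{(E_l)_i}\omega'\vert N^l_i=\int_{(E_l)_i}\bigl(F|_{N^l_i}\bigr)^*\bigl(\omega\vert M^l_i\bigr)=\int_{(|F|E_l)_i}\omega\vert M^l_i ,
\]
the first equality because $\omega'=F^*\omega$ and the inclusions compose correctly, the second by the change-of-variables formula for the Lebesgue signed measure of a top-degree form under the orientation-preserving diffeomorphism $F|_{N^l_i}$, which moreover intertwines the projections $N^l_i\to|\supp\Theta'\cap V_l|$ and $M^l_i\to|\supp\Theta\cap U_l|$ (so it matches $(E_l)_i$ with $(|F|E_l)_i$). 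Hence $\mu_{\omega'}^{(S',\vartheta')}(E_l)=\mu_\omega^{(S,\vartheta)}(|F|(E_l))$ for every $l$; summing over $l$ and using that $|F|$ is injective (so the $|F|(E_l)$ are disjoint with union $|F|(K')$) yields $\mu_{\omega'}^{(S',\vartheta')}(K')=\bigl(\mu_\omega^{(S,\vartheta)}\circ|F|\bigr)(K')$, which is the first identity. The boundary identity follows by the identical argument with Theorem~\ref{th1} replaced by the Canonical Boundary Measures theorem and $X,Y,M^l_i,N^l_i,S,S'$ replaced by $\partial X,\partial Y,\partial M^l_i,\partial N^l_i,\partial S,\partial S'$, using $\partial S'=|F|^{-1}(\partial S)$ and that $F|_{N^l_i}$ restricts to an orientation-preserving diffeomorphism between the boundaries.

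I expect the main obstacle to be the bookkeeping of the first two paragraphs rather than any analysis: one must check carefully that the $F$-preimage of a local branching structure of $\Theta$ is a local branching structure of $\Theta'$ (the good-position and properness clauses, and the matching of the isotropy groups together with their ineffective parts) and that the pulled-back orientations satisfy the two compatibility conditions, since it is precisely these facts that collapse the whole statement onto the classical change-of-variables formula for integrals of top-degree forms under orientation-preserving diffeomorphisms. A secondary point is to arrange the covers of $S$ and $S'$ so as to correspond under $|F|$ and to know that $|F|$ respects the canonical $\sigma$-algebras; this is immediate from the chartwise definition of $\mathcal{L}(\cdot)$ once that definition is in hand, because in charts $|F|$ is induced by sc-diffeomorphisms.
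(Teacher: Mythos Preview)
Your proposal is correct and follows essentially the same route as the paper: reduce to a local computation via property~(3) of Theorem~\ref{th1}, transport a local branching structure of $\Theta$ through the local sc-diffeomorphism $F$ to obtain one for $\Theta'$, and then invoke the change-of-variables formula for top-degree forms under the orientation-preserving diffeomorphisms $F|_{N_i}$. The paper's argument is terser (it simply asserts that it suffices to check the equality on sets $K'$ contained in a single local chart and then writes down the one-line computation), whereas you spell out the verification that $\Theta'$ is a branched ep-subgroupoid, the matching of effective isotropy groups, the induced orientation, and the explicit disjoint decomposition $K'=\bigsqcup_l E_l$; your extra care about $\sharp G_{\mathrm e}$ (rather than $\sharp G$) is in fact more accurate than the paper's shorthand.
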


\subsection{Main Results for Polyfolds}
The definition of an equivalence and Theorem \ref{th4}  allow us  to
rephrase the previous results in the  polyfold set-up.
We begin by defining the concept of a polyfold. This  concept is discussed in much detail in \cite{HWZ3}. We  consider  a second countable paracompact space $Z$. A polyfold structure on $Z$
is a pair $(X,\beta)$ in which $X$ is an ep-groupoid and
$\beta:\abs{X}\rightarrow Z$ a
homeomorphism from the orbit space of $X$ onto $Z$.
Two such polyfold structures $(X,\beta)$ and
$(X',\beta')$ on $Z$  are called  equivalent,
$$(X, \beta )\sim (X', \beta'),$$
if there exists a  third polyfold structure $(X'', \beta'')$ on $Z$ and two equivalences
$$
X\xleftarrow{F} X''\xrightarrow{F'} X'
$$
satisfying
$$\beta''=\beta\circ \abs{F} = \beta' \circ{F'}.$$ This  way we obtain indeed an equivalence relation between polyfold structures on $Z$ as is demonstrated in \cite{HWZ3}, and we can introduce the following definition.

\begin{defn}
A {\bf polyfold} is  a second countable paracompact space
$Z$ equipped with an equivalence class of polyfold structures.
\end{defn}

Consider a   polyfold  $Z$  and  a polyfold structure  $(X, \beta)$  on $Z$. The orbit space $\abs{X}$ of the ep-groupoid $X$ is equipped with a filtration
$\abs{X_0}=\abs{X}\supset \abs{X_1}\supset \cdots \supset  \abs{X_{\infty}}:=
\abs{\bigcap_{i\geq 0}X_i}$
where   $\abs{X_{\infty}}$ is dense in every $\abs{X_i}$. This  filtration induce,  via the homeomorphism $\beta:\abs{X}\to Z$,  the  filtration
$$Z_0
=Z\supset  Z_1\supset \cdots  \supset Z_{\infty}:
=\bigcap_{i\geq 0}Z_i$$
on the polyfold $Z$ in which the space $Z_{\infty}$ is dense in every $Z_i$.  Any  other   equivalent polyfold structure on $Z$ induces the same filtration. Every space $Z_i$ carries a polyfold structure which we denote by $Z^i$. 

An sc-differential form on the topological space $Z$ or $Z^i$ is defined
via the overhead of the postulated polyfold structures. We define an
sc-differential form $\tau$ on the polyfold $Z$ as an equivalence
class of triples  $(X, \beta , \omega)$ in which the pair $(X,
\beta)$ is a polyfold structure on $Z$ and $\omega$ an
sc-differential form on $X$. Two such  triples $(X, \beta , \omega)$
and $(X', \beta' , \omega')$ are called equivalent, if there is a
third ep-groupoid $X''$ and two equivalences $X\xleftarrow{F}
X''\xrightarrow{F'} X'$ satisfying $\beta\circ \abs{F} = \beta'\circ
\abs{F'}$ and, in addition,
$$
F^*\omega =(F')^*\omega'
$$ for the pull back forms on $X''$.
We shall abbreviate an equivalence class by $\tau =[\omega]$ where $(X, \beta ,\omega)$ is a representative of the class and call $\tau$ an sc-differential form on the polyfold $Z$. Again we have a directed system obtained from the inclusion $Z^{i+1}\rightarrow Z^i$ which allows us to define, analogously to the ep-groupoid-case, the notion of a differential form on $Z_\infty$. We shall  still use the symbol $[\omega]$ for such forms.  It is important to keep in mind that a representative $\omega$ of $[\omega]$ is an  sc-differential form on $X_\infty$ where $(X,\beta)$ is a polyfold structure on  $Z$.

The exterior derivative of an sc-differential form $[\omega ]$ is defined by
$$
d[\omega] =[d\omega].
$$

A {\bf branched suborbifold $S$ of a polyfold $Z$}
 is a subset $S\subset Z$ equipped with a weight
 function $w:S\to \Q^+\cap (0, \infty )$ together
 with an equivalence class of triples
 $(X, \beta ,\Theta)$ in which the pair $(X, \beta)$
 is a polyfold structure on $Z$ and $\Theta:X\to \Q^+$
 is an ep-subgroupoid  of $X$ satisfying $S=\beta (\abs{\supp \Theta })$
 and $w (\beta |x|)=\Theta (x)$ for $x\in \supp\Theta$.
Two  triples  $(X, \beta , \Theta )$, $(X', \beta', \Theta')$ are said to be
equivalent,
$$(X, \beta , \Theta )\sim (X', \beta', \Theta')$$
if there exists  a third triple $(X'', \beta'', \Theta'')$ and
two equivalences
$X\xleftarrow{F} X''\xrightarrow{F'} X'$
satisfying $\beta''=\beta \circ \abs{F}=\beta'\circ \abs{F'}$
for the induced maps on the orbit spaces and, in addition,
$$\Theta''=\Theta'\circ F'=\Theta \circ F.$$

The branched suborbifold $S\subset Z$ is called $n$-dimensional, resp. compact, resp. oriented, if the representative $(X, \beta ,\Theta)$ of the ep-groupoid is $n$-dimensional, resp. $\abs{\supp \Theta}$ is compact, resp. $\Theta$ is oriented. In this latter case,  the equivalences $F$ and $F'$ are required to preserve the orientations.

Similarly,  we define  the ``boundary'' set $\partial S$ of a
branched suborbifold $S$ by setting
$\partial S=\beta (\abs{\supp \Theta \cap \partial X})$
 for a representative $(X, \beta , \Theta)$ of the equivalence class.
 From the  previous results it follows
that for a compact and oriented
  branched suborbifold  $S\subset Z$ of a polyfold $Z$ there is a canonical
$\sigma$-algebra $(S, {\mathcal L}(S))$ of measurable subsets and a
well defined integration theory for which Stokes' theorem holds.

\begin{thm}\label{thmst1}
Let $Z$ be a polyfold and $S\subset Z$ be an oriented compact branched suborbifold
defined by the equivalence class $[(X, \beta ,\Theta)]$ and equipped with the weight function $w:S\to \Q^+\cap (0,\infty )$.
For an sc-differential $n$-form $\tau$ on $Z_\infty$ and $K\in {\mathcal L}(S)$,  define
$$
\int_{(K,w)}\tau:=\int_{\beta^{-1}(K)}
d\mu_\omega^{(\beta^{-1}(S),\vartheta)}=\mu_\omega^{(\beta^{-1}(S),\vartheta)}(\beta^{-1}(K)),
$$
where  the equivalence class $\tau$ is represented by the triple $(X, \beta ,\omega)$ and the weight function $\vartheta$  on
$\beta^{-1}(S)=\abs{\supp \Theta}$ is defined by $\vartheta (|x|)=\Theta (x).$
Then    the integral  $\int_{(K,w)}\tau$ is
well defined. Moreover,   if $\tau$ is an sc-differential $(n-1)$-form on $Z_\infty$, then
$$
\int_{(\partial S,w)}\tau =\int_{(S,w)} d\tau.
$$
\end{thm}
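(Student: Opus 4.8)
The plan is to reduce Theorem \ref{thmst1} to the ep-groupoid statements already established, namely the Stokes Theorem \ref{thmst0} together with the compatibility under equivalences of Theorem \ref{th4}; no new analysis is needed, only bookkeeping with representatives. There are two points to check: that the number $\int_{(K,w)}\tau$ does not depend on the chosen representative $(X,\beta,\Theta,\omega)$, and the Stokes identity itself.

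For well-definedness, suppose $(X,\beta,\Theta,\omega)$ and $(X',\beta',\Theta',\omega')$ both represent the data $(S,w,\tau)$. By the definitions of a branched suborbifold of $Z$ and of an sc-differential form on $Z$, there are a third ep-groupoid $X''$ and equivalences $X\xleftarrow{F}X''\xrightarrow{F'}X'$ with $\beta\circ\abs{F}=\beta''=\beta'\circ\abs{F'}$, with $\Theta\circ F=\Theta''=\Theta'\circ F'$, with $F^*\omega=\omega''=(F')^*\omega'$, and (since $S$ is oriented) with $F$ and $F'$ preserving orientations. Write $S_X=\abs{\supp\Theta}$ and $\vartheta_X(|x|)=\Theta(x)$, and similarly for $X'$ and $X''$. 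Applying Theorem \ref{th4} to $F\colon X''\to X$ and then to $F'\colon X''\to X'$ gives
\[
\mu_{\omega}^{(S_X,\vartheta_X)}\circ\abs{F}=\mu_{\omega''}^{(S_{X''},\vartheta_{X''})}=\mu_{\omega'}^{(S_{X'},\vartheta_{X'})}\circ\abs{F'}.
\]
Since $\abs{F}\colon\abs{X''}\to\abs{X}$ is an sc-homeomorphism restricting to a homeomorphism $S_{X''}\to S_X$ (and likewise for $\abs{F'}$), and since $\beta^{-1}(K)=\abs{F}\big((\beta'')^{-1}(K)\big)$ and $(\beta')^{-1}(K)=\abs{F'}\big((\beta'')^{-1}(K)\big)$ by the intertwining relations above, evaluating the displayed identity on the set $(\beta'')^{-1}(K)$ yields
\[
\mu_{\omega}^{(S_X,\vartheta_X)}\big(\beta^{-1}(K)\big)=\mu_{\omega''}^{(S_{X''},\vartheta_{X''})}\big((\beta'')^{-1}(K)\big)=\mu_{\omega'}^{(S_{X'},\vartheta_{X'})}\big((\beta')^{-1}(K)\big).
\]
The same argument (together with the compatibility of the canonical $\sigma$-algebras of Theorem \ref{th0} under equivalences) shows that the collection $\mathcal L(S):=\{K\subset S\mid \beta^{-1}(K)\in\mathcal L(\beta^{-1}(S))\}$ is independent of the representative, so $\int_{(K,w)}\tau$ and $\int_{(\partial S,w)}\tau$ are well defined.

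For the Stokes identity, fix one representative $(X,\beta,\Theta,\omega)$ with $\omega$ an sc-differential $(n-1)$-form on $X_\infty$; then $(X,\beta,d\omega)$ represents $d\tau$ since $d[\omega]=[d\omega]$. By the definition of the integral,
\[
\int_{(S,w)}d\tau=\mu_{d\omega}^{(S_X,\vartheta_X)}(S_X),\qquad
\int_{(\partial S,w)}\tau=\mu_{\omega}^{(\partial S_X,\vartheta_X)}(\partial S_X),
\]
where we used $\beta^{-1}(S)=\abs{\supp\Theta}=S_X$ and, because an equivalence preserves the degeneracy index, $\beta^{-1}(\partial S)=\abs{\supp\Theta\cap\partial X}=:\partial S_X$. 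The hypotheses of Theorem \ref{thmst0} hold for this representative — $\Theta$ is an oriented $n$-dimensional branched ep-subgroupoid with compact orbit space — so $\mu_{d\omega}^{(S_X,\vartheta_X)}(S_X)=\mu_{\omega}^{(\partial S_X,\vartheta_X)}(\partial S_X)$, which is precisely the claimed equality.

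The only genuine subtlety is the independence of the canonical $\sigma$-algebra $\mathcal L(S)$ and of the corresponding push-forward measures under the chain of equivalences, and this is exactly what Theorem \ref{th4} supplies. Thus the principal "obstacle" is organizational: keeping track of which orbit space each measure lives on, and verifying that the sc-homeomorphisms $\abs{F}$ and $\abs{F'}$ intertwine the identifications $\beta,\beta',\beta''$ correctly. Everything else follows at once from the ep-groupoid results.
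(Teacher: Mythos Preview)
Your proof is correct and follows essentially the same route as the paper: reduce well-definedness to two applications of Theorem~\ref{th4} via a common refinement $X''$ and the relation $F^\ast\omega=(F')^\ast\omega'$, and reduce the Stokes identity to Theorem~\ref{thmst0} applied to a single representative. The only addition you make beyond the paper's argument is the explicit remark that the $\sigma$-algebra $\mathcal L(S)$ itself is independent of the representative, which is a reasonable point to flag.
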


In the statement of Theorem \ref{thmst1} we associated with a given differential form $\tau$ and a given oriented compact branched suborbifold $S$ equipped with  the weight function $w$,
a  measure $\mu_\tau^{(S,w)}$ on $(S,{\mathcal
L}(S))$ which incorporates the weight function $w$.
Alternatively,
one could associate a measure on $(S, {\mathcal L}(S))$ which does not
take the weighting into account. Then the formulae occurring would
be different. This alternative measure would be given by
$\frac{1}{w}d\mu_\tau^{(S,w)}$. For the applications to the Fredholm
theory  the following point of view is useful. The solution sets
are of the form $|\supp \Theta|$ which gives a topological space
$(S,w)$ equipped with a weighting and  the differential form
descends to a differential form on $S$ and is integrated taking into
account that points in $S$ only account as prescribed by their
weight. From a practical point of view, i.e.,  computations, the
alternative view point that the differential form gives a signed
measure on the un-weighted $S$ is useful. We have used both views
and the formalism is given in such a way that any view point can be
emphasized. If we write $\int_{(K,w)}\tau$,  the differential form
$\tau$ is integrated over the weighted space $(K,w)$, $K\subset
(S,w)$.  If  we write $\mu_{\tau}^{(S,w)}(K)$, then   we take the
measure of the un-weighted $K$ and the weight is already encoded in
the measure. Of course,
$$
\int_{(K,w)}\tau =\mu_{\tau}^{(S,w)}(K).
$$

\subsection{Applications to Polyfold Fredholm Sections}
In this subsection we shall sketch an application of the branched integration theory, referring
for details to \cite{HWZ3}. We assume that all the spaces are build on splicing cores based on separable Hilbert spaces, so that  sc-smooth partitions of unity are
available, see Appendix \ref{scpartition}.

We consider a strong polyfold bundle $p:W\to Z$ as introduced in Definition 3.15 of  \cite{HWZ3} and a proper Fredholm section $f$ of the bundle $p$ as defined in Definition 4.1 of \cite{HWZ3}. The term ``proper" requires the solution set $\{z\in Z\vert \, f(z)=0\}$ to be a compact subset of $Z$. By definition of a polyfold bundle, there exist as overhead a strong bundle $P:E\to X$ over the ep-groupoid $X$ together with two
homeomorphisms $\Gamma:\abs{E}\to W$ and $\gamma:\abs{X}\to Z$ defined on the orbit spaces and satisfying $p\circ \Gamma =\gamma \circ \abs{P}$. Moreover, again by  definition of $f$, there exists a Fredholm section $F$ of the strong bundle $P:E\to X$ representing the section $f$ in the sense that
 $f\circ \gamma =\Gamma \circ \abs{F}$. We recall from \cite{HWZ3} the concept of $\ssc^{+}$-multisections.

\begin{defn}\label{muli}
Let   $P:E\to X$  be a strong bundle over the ep-groupoid $X$. Then  an
 $\mathbf{sc}^{+}$-{\bf multisection}   of $P$  is a functor
$$\Lambda:E\to \Q^+$$
such that the following local representation, called {\bf local section structure},   holds true. For every object $x_0\in X$,   there
exist  an open neighborhood $U(x_0)\subset X$, finitely many
 $\text{sc}^+$-sections $s_1,\ldots ,s_k:U\to E$, called {\bf local sections}, and associated positive rational numbers
$\sigma_1. \ldots ,\sigma_k$, called {\bf weights}, satisfying
\begin{equation*}
\sum_{j=1}^k\sigma_j=1 \quad   \text{and}\quad \Lambda (e)=\sum_{\{j\vert\,
s_j(P(x))=e\}}\sigma_j
\end{equation*}
for all $e\in E\vert U$. By convention, the sum over the
empty set is equal to $0$.
\end{defn}

The functoriality of $\Lambda$ implies that $\Lambda (e)=\Lambda (e')$ if there exists a morphism $e\to e'$ in ${\bf E}$. Hence $\Lambda$ induces a  map $\abs{\Lambda}:\abs{E}\to \Q^+$.

An $\ssc^+$-multisection of the polyfold bundle is a map $\lambda:W\to \Q^+$ for which there exists an
$\ssc^+$-multisection $\Lambda$ of the overhead $P:E\to X$ satisfying $\lambda (w)=\Lambda (e)$ if $w=\Gamma ([e])$.

It has been proved  in \cite{HWZ3} that there are many
$\ssc^+$-multisections $\lambda$ having their supports in preassigned {saturated open sets $U$ and the pair  $(f,\lambda)$ is in   good position to the boundary, so that  the representative $F$ of $f$ for the overhead model $P:E\to X$ of the bundle $p$ possesses an $\ssc^{+}$-mulitsection $\Lambda$, for which $\Theta=\lambda \circ F:X\to \Q^+$ is a branched ep-subgroupoid. Recall that a subset $A$ of $X$ is saturated provided it contains all points which are connected by a morphism to a point in $A$. The  support of $\Theta$ is the solution set
$$S (F, \Lambda )=\{x\in X\vert \, \Lambda (F(x))>0\}.$$
Of course, this is an example of a saturated set.
The solution set $S(f, \lambda )\subset Z$ of the pair $(f,\lambda )$ is  defined by
$$S(f, \lambda )=\{z\in Z\vert \, \lambda (f(z))>0\}.$$
The map $x\mapsto \gamma (|x|)$ from $S(F, \Lambda )$ to $S(f, \lambda )$ induces a homeomorphism
$\abs{S(F, \Lambda )}\to S(f, \lambda )$. The natural map $\lambda_f:S(f, \lambda ) \to \Q^+$ defined by
$$\lambda_f (z):=\lambda (f(z))$$
is called the weight function on $S(f, \lambda )$. If $f$ is oriented, then   $\supp \Theta$ has an induced orientation as shown in \cite{HWZ3}. Using the overheads we can integrate forms over the solution set $S(f,\lambda )\subset Z$.

Consider the differential algebra
$(\Omega_\infty^\ast(X),d)$ on the ep-groupoid $X$. Its cohomology,  denoted by $H^\ast_{dR}(X)$,  is called the de Rham cohomology of $X$. We have an inclusion $\partial X\rightarrow X$ and the restriction of this map to local faces is sc-smooth. Local faces have a natural M-polyfold structure and the same is true for the intersections  of local faces. Therefore,  it makes sense to talk about sc-differential forms on $\partial X$ and $\partial X^i$ as well as sc-differential forms on $\partial X_\infty$.
On  the differential algebra $\Omega^\ast_\infty(X)\oplus\Omega^{\ast-1}_\infty(\partial X)$
we define the differential $d$  by setting
$$
d(\omega,\tau)=(d\omega,j^\ast\omega-d\tau)
$$
where $j:\partial X\rightarrow X$ stands for   the inclusion map. Its  cohomology  is denoted  by $H^\ast_{dR}(X,\partial X)$.

\begin{lem}
Assume that $F:X\rightarrow Y$ is an equivalence between ep-groupoids. Then $F^\ast:\Omega_\infty^\ast(Y)\rightarrow\Omega^\ast_\infty(X)$ is an isomorphism of graded differential algebras. In particular,
$F$ induces an isomorphism $H^\ast_{dR}(Y)\rightarrow H^\ast_{dR}(X)$. Similarly,  $F^\ast:\Omega_\infty^\ast(Y,\partial Y)\rightarrow\Omega^\ast_\infty(X,\partial X)$ is an isomorphism of graded differential algebras and induces an isomorphism $H^\ast_{dR}(Y,\partial Y)\rightarrow H^\ast_{dR}(X,\partial X)$.
\end{lem}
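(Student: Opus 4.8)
The plan is to show that $F^\ast$ is a well-defined homomorphism of graded differential algebras which is both injective and surjective; the statements about de Rham cohomology then follow at once because a chain isomorphism induces isomorphisms on cohomology, and the relative case will be reduced to the absolute one applied to the boundary.

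First I would check that $F^\ast$ is well defined and a morphism of complexes. Since $F$ is a functor that is a local sc-diffeomorphism on objects and on morphisms, for $\omega\in\Omega^k_\infty(Y)$ the form $F^\ast\omega$ given by $(F^\ast\omega)_x=(T_xF)^\ast\omega_{F(x)}$ is multilinear, skew-symmetric and sc-smooth on $\oplus_kTX$, and its compatibility with morphisms is forced by functoriality of $F$ and of $T$: for $\varphi\colon x\to x'$ in ${\bf X}_1$ one has $T(F\varphi)\circ T_xF=T_{x'}F\circ T\varphi$, hence $(T\varphi)^\ast(F^\ast\omega)_{x'}=(F^\ast\omega)_x$. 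Pull back is linear and respects the wedge product, so $F^\ast$ is an algebra homomorphism, and it commutes with $d$ because the defining formula for $d\omega$ is natural under a local sc-diffeomorphism (directional derivatives and Lie brackets of vector fields pull back correctly). Since a local sc-diffeomorphism preserves the filtration, $F^\ast$ carries $\Omega^k(Y^i)$ into $\Omega^k(X^i)$ compatibly with the directed systems and so descends to $\Omega^\ast_\infty$.

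Injectivity is easy: since $\abs F$ is a homeomorphism, every orbit of $Y$ contains a point of $F(X)$, so by morphism-compatibility an sc-differential form on $Y$ is determined by its restriction to $F(X)$; and if $F^\ast\omega=0$ then $(T_xF)^\ast\omega_{F(x)}=0$ with $T_xF$ a linear isomorphism, whence $\omega_{F(x)}=0$ for all $x$ and $\omega=0$. For surjectivity, given $\eta\in\Omega^k_\infty(X)$ I would define $\omega$ on $Y$ by push forward: put $\omega_{F(x)}:=((T_xF)^{-1})^\ast\eta_x$, and for an arbitrary $y\in Y$ choose $x\in X$ and a morphism $g\colon y\to F(x)$ and set $\omega_y:=(Tg)^\ast\omega_{F(x)}$. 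The crux is independence of the choices: given a second choice $g'\colon y\to F(x')$, pick a morphism $h\colon x\to x'$ (there is one since $\abs x=\abs{x'}$) and, using the bijection $G_x\to G_{F(x)}$ from the definition of an equivalence together with the $G_x$-invariance of $\eta_x$, modify $h$ by an element of $G_x$ so that $F(h)\circ g=g'$; then morphism-compatibility of $\eta$ along $h$ and the identity $T(F(h))\circ T_xF=T_{x'}F\circ T(h)$ give $(Tg)^\ast\omega_{F(x)}=(Tg')^\ast\omega_{F(x')}$. Once $\omega$ is well defined it is sc-smooth near any $y$ because it is locally the pull back of $\eta$ under an sc-diffeomorphism (compose the \'etale extension of $g$ with a local inverse of $F$), it lies in $\Omega^k_\infty(Y)$ since all local models are sc-diffeomorphisms, it is automatically morphism-compatible (if $\varphi\colon y_1\to y_2$ then $\omega_{y_1}=(T(g_2\circ\varphi))^\ast\omega_{F(x_2)}=(T\varphi)^\ast\omega_{y_2}$), and $F^\ast\omega=\eta$ by construction. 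Hence $F^\ast$ is an isomorphism of graded differential algebras and therefore induces an isomorphism $H^\ast_{dR}(Y)\to H^\ast_{dR}(X)$.

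For the relative version, note that $d_X=d_Y\circ F$, so $F$ maps $\partial X$ into $\partial Y$ and restricts to a local sc-diffeomorphism on local faces and their intersections; the induced orbit map is a homeomorphism and the isotropy bijections restrict, so $F|_{\partial X}\colon\partial X\to\partial Y$ is again an equivalence. Applying the absolute case to it shows $F^\ast\colon\Omega^\ast_\infty(\partial Y)\to\Omega^\ast_\infty(\partial X)$ is an isomorphism, so $(\omega,\tau)\mapsto(F^\ast\omega,F^\ast\tau)$ is an isomorphism $\Omega^\ast_\infty(Y,\partial Y)\to\Omega^\ast_\infty(X,\partial X)$; it commutes with the relative differential $d(\omega,\tau)=(d\omega,j^\ast\omega-d\tau)$ because $F^\ast j_Y^\ast=j_X^\ast F^\ast$, and passing to cohomology gives the isomorphism $H^\ast_{dR}(Y,\partial Y)\to H^\ast_{dR}(X,\partial X)$. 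The step I expect to be the main obstacle is precisely the well-definedness of the push forward $\omega$ in the surjectivity argument: reconciling the ambiguity in the choice of preimage object and of connecting morphism is exactly where all three defining properties of an equivalence must be used together.
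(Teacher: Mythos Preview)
Your proof is correct and follows essentially the same strategy as the paper's: injectivity from the fact that every orbit of $Y$ meets $F(X)$ combined with morphism compatibility, and surjectivity by pushing a form on $X$ forward to $F(X)$ and then extending along morphisms to all of $Y$. The paper's argument is terse---it simply says ``push forward by $F$'' and ``extend using morphisms'' without spelling out the well-definedness check---whereas you make explicit how the isotropy bijection $G_x\to G_{F(x)}$ is used to reconcile two choices $(x,g)$ and $(x',g')$; this is exactly the content the paper suppresses, and your verification is correct (the phrase ``$G_x$-invariance of $\eta_x$'' is slightly misleading, since what you actually use is the isotropy bijection to adjust $h$ and then full morphism-compatibility of $\eta$ along the adjusted $h$, but no gap results). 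For the relative case both you and the paper observe that $F$ restricts to equivalences on local faces and then invoke the absolute case.
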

\begin{proof}
It suffices  to prove  that $F^\ast:\Omega_\infty^\ast(Y)\rightarrow\Omega^\ast_\infty(X)$  is a bijection. We start with injectivity  and assume that
 $F^\ast\omega=0$ for some sc-differential form $\omega$ on $Y^i$. Since $F:X^i\to Y^i$ is a local sc-diffeomorphism, it follows  that $\omega$ vanishes on
$TF(TX^i)$.  In view of the definition of an equivalence, for every point in $Y^{i+1}$ there is a point in $F(X^{i+1})$  and a morphism connecting these two points. The same is true for points in $TY^i$ and $TF(TX^i)$.  These imply  that $\omega$ vanishes everywhere since $\omega$ is compatible with morphisms.  To prove surjectivity,   take  a form $\omega$ on $X^i$.  Then  the push-forward of the form $\omega$ by the equivalence $F$ defines a form $F_\ast\omega$ on $F(X^i)$. Since all  the points in $F(X^{i})$ are connected by morphisms  to points in $Y^{i}$ and $\omega$ is compatible with morphisms,  we can extend the form $F_\ast\omega$ to obtain a form $\tau$ on the whole $Y^i$.  Then $F^\ast \tau=\omega$.  For the relative case, we use that fact  that $F$ also induces equivalences between the local faces.
\end{proof}
The compatibility of sc-differential forms  with morphisms implies the following lemma.
\begin{lem}\label{lps}
Assume that $F,G:X\rightarrow Y$ are two equivalences between ep-groupoids and $\tau:F\rightarrow G$ is a natural transformation. Then the induced isomorphism $F^\ast$ and $G^\ast$ between $H^\ast_{dR}(Y)$  and $ H^\ast_{dR}(X)$ as well as $H^\ast_{dR}(Y,\partial Y)$ and $H^\ast_{dR}(X,\partial X)$ coincide.
\end{lem}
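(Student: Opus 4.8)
The plan is to show that if $\tau\colon F\Rightarrow G$ is a natural transformation between two equivalences $F,G\colon X\to Y$, then already on the level of forms the pullbacks agree: $F^\ast\omega=G^\ast\omega$ for every $\omega\in\Omega^\ast_\infty(Y)$ (and likewise for the relative complex), so that a fortiori the induced maps on cohomology coincide. The point is that a natural transformation between functors assigns to each object $x\in X$ a morphism $\tau_x\colon F(x)\to G(x)$ in $\mathbf{Y}$, compatible with arrows of $X$ in the sense that $G(h)\circ\tau_x=\tau_{x'}\circ F(h)$ for every morphism $h\colon x\to x'$. Since $\tau$ is required to be sc-smooth (as a map $X\to\mathbf{Y}$, landing in $\mathbf{Y}_1$ over the $1$-level), we may differentiate it.

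First I would fix $\omega\in\Omega^k_\infty(Y)$, represented by an sc-smooth $k$-form on some $Y^i$, and a point $x\in X^{i+1}$ together with tangent vectors $\xi_1,\dots,\xi_k\in T_xX$. The morphism $\tau_x\colon F(x)\to G(x)$ lies in $\mathbf{Y}_1$, hence by the basic property of ep-groupoids it extends to a unique local sc-diffeomorphism $t\circ s^{-1}$ from a neighbourhood of $F(x)$ to a neighbourhood of $G(x)$, with tangent map $T\tau_x\colon T_{F(x)}Y\to T_{G(x)}Y$; the compatibility of $\omega$ with morphisms (Definition of an sc-differential form) gives $(T\tau_x)^\ast\omega_{G(x)}=\omega_{F(x)}$. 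Next, differentiating the naturality identity $G=\varphi_{\tau}\circ F$ locally at $x$ — more precisely, using that in a neighbourhood of $x$ the functor $G$ equals the local sc-diffeomorphism induced by $\tau$ composed with $F$ — yields the chain-rule relation
$$
T_xG=T_{F(x)}(t\circ s^{-1})\circ T_xF = T\tau_x\circ T_xF
$$
as maps $T_xX\to T_{G(x)}Y$. Combining these two displays,
$$
(G^\ast\omega)_x(\xi_1,\dots,\xi_k)=\omega_{G(x)}(T_xG\,\xi_1,\dots,T_xG\,\xi_k)=\omega_{G(x)}\big((T\tau_x\circ T_xF)\xi_1,\dots\big)=\omega_{F(x)}(T_xF\,\xi_1,\dots,T_xF\,\xi_k),
$$
which is exactly $(F^\ast\omega)_x(\xi_1,\dots,\xi_k)$. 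Since $x\in X^{i+1}$ and the $\xi_j$ are arbitrary, and $X_{i+1}$ is dense in $X_i$ with all forms sc-smooth, we conclude $F^\ast\omega=G^\ast\omega$ on all of $X$. As both $F^\ast$ and $G^\ast$ commute with $d$, the cohomology maps agree. For the relative complex $\Omega^\ast_\infty(Y,\partial Y)$, I would note that a natural transformation restricts to a natural transformation between the induced equivalences on local faces (using $d_X=d_Y\circ F$ and likewise for $G$, so $\tau$ preserves the degeneracy stratification), so the identity $F^\ast=G^\ast$ holds on the boundary piece as well, and hence on the direct sum with the twisted differential $d(\omega,\sigma)=(d\omega,j^\ast\omega-d\sigma)$.

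The main obstacle — though it is more a bookkeeping point than a genuine difficulty — is making precise that the naturality square for $\tau$ differentiates to the chain-rule identity $T_xG=T\tau_x\circ T_xF$. One must be careful that $\tau_x$ is only a morphism, not a globally defined map, so the differentiation is carried out through its canonical local extension $t\circ s^{-1}$; one should check that this extension is indeed the one through which $G$ factors near $x$, which follows from the uniqueness clause in Proposition \ref{prop1.2} (every morphism between nearby points is realized by a unique $\Gamma(g,\cdot)$) applied to the family of morphisms $\tau_{x'}$ for $x'$ near $x$. Once this local factorization is in hand, the rest is the routine chain-rule and morphism-compatibility computation sketched above, together with the standard density/sc-smoothness extension argument.
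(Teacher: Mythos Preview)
Your proposal is correct and follows exactly the approach the paper indicates: the sentence preceding the lemma in the paper reads ``The compatibility of sc-differential forms with morphisms implies the following lemma,'' and gives no further argument. You have simply unpacked this remark, showing that $F^\ast\omega=G^\ast\omega$ already at the level of forms via $(T\tau_x)^\ast\omega_{G(x)}=\omega_{F(x)}$ and the chain rule $T_xG=T\tau_x\circ T_xF$.

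One small simplification: you do not need Proposition~\ref{prop1.2} for the local factorization. Since $\tau:X\to\mathbf{Y}$ is sc-smooth with $s\circ\tau=F$ and $t\circ\tau=G$, and $s$ is a local sc-diffeomorphism, near any $x$ you have directly $\tau=s^{-1}\circ F$ and hence $G=t\circ\tau=(t\circ s^{-1})\circ F$; differentiating gives $T_xG=T\tau_x\circ T_xF$ at once.
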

Lemma  \ref{lps} allows us to define the de Rham cohomology of a polyfold.
Consider a polyfold $Z$ and let  $(X,\beta)$ and $(X',\beta')$  be two  equivalent polyfold structures on $Z$. This means that there exists a third polyfold structure $(X'',\beta'')$ and two equivalences $F:X''\rightarrow X$ and $F':X''\rightarrow X'$
satisfying  $\beta''=\beta\circ |F|=\beta'\circ |F'|$.
Then  the equivalences $F$ and $F'$ induce,   via  the map ${((F')^\ast)}^{-1}\circ F^\ast$,   isomorphisms $H^\ast_{dR}(X)\rightarrow H^\ast_{dR}(X')$ and $H^\ast_{dR}(X,\partial X)\rightarrow H^\ast_{dR}(X',\partial X')$ which do not depend on the choice of $X''$. This is  a consequence of Lemma \ref{lps} and the definition of equivalent polyfold structures on the polyfold $Z$.  For more details  see \cite{HWZ3}.
Hence we can associate to $Z$ and $(Z,\partial Z)$
the connected simple systems, that is,  a category with precisely one morphism between any two (objects)  cohomology rings associated to the local models giving us natural identifications (of course,  the  morphism between any two  of the cohomology rings is a natural  isomorphism).  We denote these connected simple systems by $H^\ast_{dR}(Z)$ and $H^\ast_{dR}(Z,\partial Z)$ and call them de Rham cohomology of a polyfold $Z$ and the  pair $(Z, \partial Z)$, respectively.

\begin{thm}[{\bf Invariants in case of boundary}] \label{poil}
Let  $f$ be  a proper and  oriented Fredholm section of the strong
polyfold bundle $p:W\rightarrow Z$. Then there exists a well-defined
map
$$\Psi_f:H^\ast_{dR}(Z,\partial Z)\rightarrow {\mathbb R}$$
having the following properties.
Let $N$ be a fixed auxiliary norm on  the strong
polyfold bundle $p:W\rightarrow Z$. Assume that $U$ is an open neighborhood of
 the solution set  $S(f)=\{z\in Z\vert \, f(z)=0\}$  such that the pair  $(N,U)$ controls compactness.  Then for every  $\ssc^+$-multisection $\lambda$ with   its support in $U$ and such that  $N(\lambda )<1$  and $(f,\lambda)$  in good position, the solution set $S(f, \lambda )=\{z\in Z\vert \, \lambda (f(z))>0\}$ is a compact branched suborbifold with boundary with corners. Moreover,
$$
\Psi_f([\omega,\tau]) := \int_{(S(f,\lambda),\lambda_f)}\omega-
\int_{(\partial S(f,\lambda),\lambda_f)}\tau
$$
for every pair $[\omega , \tau ]$ representing a class in $H^n_{dR}(Z,\partial Z)$.
In addition, if $t\mapsto f_t$ is an sc-smooth, oriented, proper homotopy of sc-Fredholm sections, then
$$
\Psi_{f_0}=\Psi_{f_1}.
$$
\end{thm}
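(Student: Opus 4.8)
The plan is to reduce the whole statement to Stokes' theorem (Theorem~\ref{thmst1} on the polyfold level, Theorem~\ref{thmst0} on the ep-groupoid level) together with the perturbation and transversality results of \cite{HWZ3}. First I would fix an overhead: a polyfold structure $(X,\beta)$ on $Z$, a strong bundle $P\colon E\to X$, and a Fredholm section $F$ representing $(p,f)$. By \cite{HWZ3}, for the given auxiliary norm $N$ and neighbourhood $U\supset S(f)$ with $(N,U)$ controlling compactness there are $\ssc^{+}$-multisections $\lambda$ with $\supp\lambda\subset U$, $N(\lambda)<1$ and $(f,\lambda)$ in good position, and for every such $\lambda$ the functor $\Theta=\Lambda\circ F$ is an oriented $n$-dimensional branched ep-subgroupoid with compact orbit space; equivalently $S(f,\lambda)$ is a compact oriented branched suborbifold of $Z$ with boundary with corners, carrying the weight function $\lambda_f$. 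Hence by Theorem~\ref{th1} (and its boundary analogue) and Theorems~\ref{thmst0}--\ref{thmst1} the two integrals in the asserted formula make sense, and by Theorem~\ref{th4} they do not depend on the chosen overhead, so the prescription is compatible with the connected simple system $H^{\ast}_{dR}(Z,\partial Z)$. I would then define $\Psi_f$ to be $0$ on classes of degree $\ne n$ and by the stated formula in degree~$n$; what remains is to prove: (a) the right-hand side depends only on the relative class $[\omega,\tau]$; (b) it is independent of $\lambda$ and of the admissible data $(N,U)$; (c) it is unchanged along an oriented proper homotopy $t\mapsto f_t$.

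For (a) I would take a coboundary $(\omega,\tau)=d(\alpha,\beta)=(d\alpha,\,j^{\ast}\alpha-d\beta)$, with $\alpha$ an $(n-1)$-form and $\beta$ an $(n-2)$-form on the boundary, write $S:=S(f,\lambda)$, and apply Stokes to $S$ to cancel $\int_{(S,\lambda_f)}d\alpha$ against $\int_{(\partial S,\lambda_f)}j^{\ast}\alpha$; this leaves $\int_{(\partial S,\lambda_f)}d\beta$, and applying Stokes once more to the $(n-1)$-dimensional branched suborbifold $\partial S$ rewrites it as an integral of $\beta$ over the order-$\ge 2$ corner strata of $S$. That integral vanishes: each codimension-two corner stratum lies in the closure of exactly two codimension-one faces and inherits from them opposite boundary orientations, so the two contributions cancel — the corners version of $\partial^{2}=0$. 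Hence the formula depends only on $[\omega,\tau]\in H^{n}_{dR}(Z,\partial Z)$.

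Statements (b) and (c) are two instances of one cobordism argument; for (b) one takes $f_t\equiv f$ and lets $\lambda_0,\lambda_1$ be two admissible choices. I would form the pullback strong polyfold bundle over $\wh Z=[0,1]\times Z$ along the projection, together with the section $\wh f(t,z)=f_t(z)$, which is a proper oriented Fredholm section of index $n+1$ with compact solution set, and — again invoking \cite{HWZ3} — an admissible $\ssc^{+}$-multisection $\wh\lambda$ for $\wh f$ whose restriction to $\{i\}\times Z$ is the given $\lambda_i$ ($i=0,1$) and with $(\wh f,\wh\lambda)$ in good position to $\partial\wh Z=(\{0,1\}\times Z)\cup([0,1]\times\partial Z)$, obtained by interpolating $\lambda_0$ and $\lambda_1$ and then making a further perturbation supported away from $\{0,1\}\times Z$. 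Then $\wh S:=S(\wh f,\wh\lambda)$ is a compact oriented $(n+1)$-dimensional branched suborbifold with boundary with corners and, with matching weights and with $^{-}$ denoting orientation reversal,
$$\partial\wh S=\big(\{1\}\times S(f_1,\lambda_1)\big)\ \sqcup\ \big(\{0\}\times S(f_0,\lambda_0)\big)^{-}\ \sqcup\ T,\qquad T:=\wh S\cap\big([0,1]\times\partial Z\big),$$
with $T$ an $n$-dimensional branched suborbifold with boundary with corners. Given a relative $n$-cocycle, $d\omega=0$ and $j^{\ast}\omega=d\tau$, and writing $\pi$ for the relevant projection onto $Z$ or $\partial Z$ and suppressing weights from the notation, Stokes applied to $\wh S$ with the closed form $\pi^{\ast}\omega$ gives $0=\int_{S(f_1,\lambda_1)}\omega-\int_{S(f_0,\lambda_0)}\omega+\int_{T}\pi^{\ast}\omega$, and Stokes applied to $T$ with $\pi^{\ast}\tau$, using $d(\pi^{\ast}\tau)=\pi^{\ast}j^{\ast}\omega=(\pi^{\ast}\omega)|_{[0,1]\times\partial Z}$, gives $\int_{T}\pi^{\ast}\omega=\int_{\partial T}\pi^{\ast}\tau$. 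In $\partial T$ the order-$\ge 2$ corner strata contribute $0$ as in (a), while each face $\{i\}\times\partial S(f_i,\lambda_i)$ occurs there with the orientation opposite to the one it carries inside $\partial(\{i\}\times S(f_i,\lambda_i))$; substituting and doing this sign bookkeeping, the two identities combine to $\int_{S(f_1,\lambda_1)}\omega-\int_{\partial S(f_1,\lambda_1)}\tau=\int_{S(f_0,\lambda_0)}\omega-\int_{\partial S(f_0,\lambda_0)}\tau$, i.e.\ (c); with $f_t\equiv f$ this is (b), and independence of $(N,U)$ follows the same way by joining any two admissible data sets through admissible data for $\wh f$.

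The main obstacle is the input borrowed from \cite{HWZ3} that is packaged into the cobordism: that the interpolating perturbation $\wh\lambda$ can be chosen with prescribed restrictions to $\{0,1\}\times Z$ while simultaneously achieving good position to every boundary face of $\wh Z$ (a relative-boundary genericity statement), and that the resulting $\wh S$ genuinely is a compact oriented branched suborbifold carrying the displayed boundary-with-corners decomposition, with weights and orientations on the end faces matching those of the $S(f_i,\lambda_i)$. Once that is in hand, the only genuinely delicate point is the orientation bookkeeping at the codimension-two corners — the cancellation over the $\partial\partial$-strata and the sign reversal between the two occurrences of $\{i\}\times\partial S(f_i,\lambda_i)$ — which is exactly what is matched by the choice of the relative de Rham differential $d(\omega,\tau)=(d\omega,\,j^{\ast}\omega-d\tau)$ together with the functional $(\omega,\tau)\mapsto\int_{S}\omega-\int_{\partial S}\tau$, and which makes the whole scheme consistent.
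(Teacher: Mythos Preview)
The paper does not actually give a proof of this theorem: it is stated in Section~1.4 as an application of the integration theory, with all the perturbation-theoretic input (existence of admissible $\ssc^{+}$-multisections, good position, homotopies of multisections with prescribed ends) deferred wholesale to \cite{HWZ3}. So there is no ``paper's own proof'' to compare against; your proposal is in effect supplying the argument the paper leaves implicit, and the scheme you outline---Stokes for well-definedness on cohomology, then a $[0,1]\times Z$ cobordism for independence of $\lambda$ and for homotopy invariance---is the standard and correct one, and is exactly what the machinery of Theorems~\ref{th1}--\ref{thmst1} is built for.

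One point to tighten in part~(a): when you ``apply Stokes once more to the $(n-1)$-dimensional branched suborbifold $\partial S$'', note that in this paper $\partial S$ is only defined as a subset of the orbit space carrying the boundary measure of Theorem~1.9; it is not itself presented as a branched ep-subgroupoid of some ep-groupoid, because $\partial X$ is not an M-polyfold but only a union of local faces. The correct formalisation is to pass to the face decomposition: each local branch $M_i$ has boundary faces which are themselves $(n-1)$-manifolds with corners, and Stokes applied face by face produces the corner integral you describe, whose vanishing is then exactly the $\partial\partial=0$ cancellation you invoke. This is routine but should be said, since the paper's Stokes theorem is stated for $\Theta$ on $X$, not for a putative $\partial\Theta$ on $\partial X$. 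The analogous remark applies to your treatment of $\partial T$ in~(c).

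Your closing paragraph correctly isolates the genuine external input: the relative genericity statement that $\wh\lambda$ can be chosen in good position to all of $\partial\wh Z$ while agreeing with prescribed $\lambda_i$ on $\{i\}\times Z$, and the compatibility of orientations and weights on the end slices. These are precisely the results one must extract from \cite{HWZ3}; once granted, the rest of your argument goes through.
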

In the case that $\partial Z=\emptyset$ this simplifies to the following result.
\begin{thm}[{\bf Invariants in the case $\partial Z=\emptyset$}] \label{poil1}
Let $p:W\rightarrow Z$ be a strong polyfold bundle with $\partial
Z=\emptyset$ and let $f_0,f_1$ be two  proper and oriented Fredholm sections which
are properly and oriented homotopic. Then the associated maps
$$
\Phi_{f_0},\, \Phi_{f_1}:H^\ast_{dR}(Z)\rightarrow {\mathbb R}
$$
are the same.
\end{thm}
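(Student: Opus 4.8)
The plan is to obtain Theorem \ref{poil1} as the $\partial Z=\emptyset$ specialization of Theorem \ref{poil}; the only point that genuinely needs checking is the behaviour of the de Rham cohomology. First I would note that $\partial Z=\emptyset$ forces $\partial X=\emptyset$ for any ep-groupoid $X$ in a polyfold structure $(X,\beta)$ on $Z$: the degeneracy index descends to $\abs{X}$ and is preserved by equivalences, and $\beta$ carries the orbit space $\abs{\partial X}$ onto the boundary $\partial Z=\emptyset$, so $\abs{\partial X}=\emptyset$ and hence $\partial X=\emptyset$. Consequently $\Omega^{\ast-1}_\infty(\partial X)=0$, so the relative complex $\Omega^\ast_\infty(X)\oplus\Omega^{\ast-1}_\infty(\partial X)$ defining $H^\ast_{dR}(Z,\partial Z)$ collapses to $(\Omega^\ast_\infty(X),d)$, which gives $H^\ast_{dR}(Z,\partial Z)=H^\ast_{dR}(Z)$. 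Next, for any $\ssc^+$-multisection $\lambda$ admissible in the sense of Theorem \ref{poil} (support in $U$, $N(\lambda)<1$, $(f,\lambda)$ in good position) the compact branched suborbifold $S(f,\lambda)\subset Z$ likewise has $\partial S(f,\lambda)=\beta(\abs{\supp\Theta\cap\partial X})=\emptyset$, so the defining formula of Theorem \ref{poil} reduces to
$$
\Psi_f([\omega])=\int_{(S(f,\lambda),\lambda_f)}\omega
$$
on a closed $n$-form $\omega$ on $Z_\infty$. This is precisely the map $\Phi_f$ of the statement, and Theorem \ref{poil} already asserts that it is well defined, i.e., independent of $\lambda$ and of the representative $\omega$ of the class $[\omega]\in H^n_{dR}(Z)$.

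With these identifications the assertion is immediate: the hypothesis that $f_0$ and $f_1$ are properly and oriented homotopic is verbatim the hypothesis of the last clause of Theorem \ref{poil}, namely that there is an sc-smooth, oriented, proper homotopy $t\mapsto f_t$ of sc-Fredholm sections joining them, and that clause gives $\Psi_{f_0}=\Psi_{f_1}$, which reads $\Phi_{f_0}=\Phi_{f_1}$ under the identification $H^\ast_{dR}(Z,\partial Z)=H^\ast_{dR}(Z)$.

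For completeness I would also record the direct argument, which is essentially the substance of the homotopy clause of Theorem \ref{poil} specialized to the boundaryless case and bypasses the general theorem. One views $\hat f(t,z):=f_t(z)$ as a proper oriented Fredholm section of index $n+1$ of the pulled-back bundle $\pr_Z^\ast W\to[0,1]\times Z$ over the polyfold $[0,1]\times Z$, whose boundary is $(\{0\}\times Z)\sqcup(\{1\}\times Z)$ because $\partial Z=\emptyset$. The key step, which I expect to be the main obstacle, is to produce an $\ssc^+$-multisection $\hat\lambda$ of $\hat f$ that restricts at the two ends to prescribed admissible perturbations $\lambda_0,\lambda_1$ of $f_0,f_1$ and is in good position to the boundary, using the transversality and perturbation-extension results of \cite{HWZ3}; one then has to verify that $\hat S=S(\hat f,\hat\lambda)$ is a compact oriented $(n+1)$-dimensional branched suborbifold of $[0,1]\times Z$ whose boundary, with the induced orientation, is the signed difference $(\{1\}\times S(f_1,\lambda_1))-(\{0\}\times S(f_0,\lambda_0))$. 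Granting this, and using $d\,\pr_Z^\ast\omega=\pr_Z^\ast d\omega=0$ for $\omega$ closed, Stokes' theorem (Theorem \ref{thmst1}) gives
$$
0=\int_{(\hat S,w)}d\,\pr_Z^\ast\omega=\int_{(\partial\hat S,w)}\pr_Z^\ast\omega=\int_{(S(f_1,\lambda_1),\lambda_{f_1})}\omega-\int_{(S(f_0,\lambda_0),\lambda_{f_0})}\omega,
$$
where $w$ is the weight function on $\hat S$; this is again the desired identity $\Phi_{f_1}=\Phi_{f_0}$. For Theorem \ref{poil1} as stated, however, granting Theorem \ref{poil}, nothing beyond the first two paragraphs is required.
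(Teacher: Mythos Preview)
Your proposal is correct and matches the paper's own treatment: the paper presents Theorem~\ref{poil1} immediately after Theorem~\ref{poil} with the sentence ``In the case that $\partial Z=\emptyset$ this simplifies to the following result,'' giving no further argument. Your first two paragraphs supply exactly the identifications the paper leaves implicit (that $\partial X=\emptyset$, hence $H^\ast_{dR}(Z,\partial Z)=H^\ast_{dR}(Z)$ and $\partial S(f,\lambda)=\emptyset$, so $\Psi_f=\Phi_f$), and your third paragraph is a useful supplementary sketch of the cobordism argument underlying the homotopy clause of Theorem~\ref{poil}, which the paper itself defers to \cite{HWZ3}.
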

The results in \cite{HWZ3} show that there are enough (controlled)
$\ssc^+$-multi-sections to perturb a proper Fredholm problem
into good position.
\section{Local Branching Structures}
In this section  we shall study the local branching structures of a branched ep-subgroupoid $\Theta:X\to \Q^+$ of an ep-groupoid $X$.  As before we abbreviate the support of $\Theta$  by $S_{\Theta}=\supp \Theta=\{x\in X\vert \, \Theta (x)>0\}.$

\subsection{Good and Bad Points}
Our first goal is to show that
the tangent set $TS_{\Theta}$ of the support   $S_{\Theta}$ defined  in the introduction is independent of the branching structure used for its definition. To begin, let  $x\in S_{\Theta}$ and let $U=U(x)$ be an open neighborhood  of $x$ on which the
isotropy group $G_x$ acts by its  natural representation and  the restriction $\Theta\vert U$  is represented by the local branching structure $(M_i)_{i\in I}$ with the associated weights
$(\sigma_i)_{i\in I}$.  The local branches  $M_i$ are finite dimensional
 submanifolds of $X$ all of the same dimension, and
$$\supp \Theta \cap U=\bigcup_{i\in I}M_i.$$
We abbreviate $M_U:=\bigcup_{i\in I}M_i$. From the
 functoriality of $\Theta$ one concludes that
$$\varphi_g (M_U)=M_U\quad \text{for all $g\in G_x$}.$$
If we have another local branching structure $(N_j)_{j\in J}$
with weights $(\tau_j)_{j\in J}$ on $U$, then
$$
\supp \Theta \cap U=\bigcup_{j\in J}N_j=:N_{U}
$$
so that the  sets $M_U$ and $N_U$  are equal. However, looking at
the individual branches one cannot find, in general, for a given
branch $M_i$ a branch $N_j$ so that $M_i=N_j$. If $y\in M_U$, we
introduce the subset $I_y$ of the index set  $I$ as follows,
$$
I_y=\{i\in I\vert \, y\in M_i\}.
$$

\begin{defn}
A  point $x\in M_U$ is called a {\bf good point}
(with respect to the
local branching structure $(M_i)_{i\in I}$ ) provided  it possesses an open neighborhood $V=V(x)\subset X$ so that
$$V\cap M_i=V\cap M_j\quad \text{for all $i,j\in I_y$.}$$
\end{defn}
Next we shall prove  that being a good point is a property of the branched ep-subgroupoid $\Theta$ itself and not of the local branching structure used for its definition.

\begin{lem}\label{lemgoodind}
If $x\in M_U=N_U$ is a good point for the local branches $(M_i)_{i\in I}$,
then  it is also  a good point for
the local branches $(N_j)_{j\in J}$.
\end{lem}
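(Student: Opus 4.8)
The plan is to replace the branching‑dependent notion of good point by an intrinsic one. I will show that $x\in M_U=N_U$ is good for a local branching structure on $U$ \emph{if and only if} there is an open neighborhood $W$ of $x$ in $X$ for which $\supp\Theta\cap W$ is a finite dimensional embedded submanifold of $X$. The right‑hand side refers only to $\Theta$, so Lemma \ref{lemgoodind} follows at once (and, in fact, goodness at $x$ is then a symmetric condition in the two structures). Throughout I write $I_x=\{i\in I\mid x\in M_i\}$ and $J_x=\{j\in J\mid x\in N_j\}$, both finite.

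First I would record that each local branch is closed in $U$: if $p\in\overline{M_i}\setminus M_i$ and $x_n\in M_i$ with $x_n\to p$, then $\{x_n\}\cup\{p\}$ is compact in $U$ while its preimage $\{x_n\}$ under the proper inclusion $M_i\hookrightarrow U$ (Definition \ref{def1}(4)) is not compact, a contradiction; the same holds for the $N_j$. Since $I$ is finite, this lets me shrink any neighborhood $V$ of $x$ so that $V\cap M_i=\emptyset$ for all $i\notin I_x$ (and similarly for $N$).

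Now suppose $x$ is good for $(M_i)_{i\in I}$. Take the neighborhood $V$ from the definition of good point, shrunk as above, and fix $i_0\in I_x$; then $V\cap M_i=V\cap M_{i_0}$ for $i\in I_x$, so
\[\supp\Theta\cap V=M_U\cap V=\bigcup_{i\in I_x}(M_i\cap V)=M_{i_0}\cap V=:\Sigma ,\]
which is an open subset of $M_{i_0}$, hence an $n$‑dimensional embedded submanifold of $X$ through $x$, with $n=\dim M_{i_0}$. This proves one implication; I then apply the converse to the structure $(N_j)_{j\in J}$. Since $N_U=\supp\Theta\cap U=M_U$, we have $N_U\cap V=\Sigma$. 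Let $m$ be the common dimension of the $N_j$. For each $j$, $N_j\cap V\subseteq\Sigma$, and as both are embedded submanifolds of $X$, $N_j\cap V$ is an embedded submanifold of $\Sigma$, so $m\le n$; if $m<n$ the finite union $\bigcup_j(N_j\cap V)$ would have $n$‑dimensional measure zero in $\Sigma$ and could not cover the nonempty $n$‑manifold $\Sigma=N_U\cap V$, so $m=n$. Hence for $j\in J_x$ the set $N_j\cap V$ is an $n$‑dimensional embedded submanifold of the $n$‑manifold $\Sigma$ containing $x$, so it is open in $\Sigma$ by invariance of domain. As $J_x$ is finite, $P:=\bigcap_{j\in J_x}(N_j\cap V)$ is an open neighborhood of $x$ in $\Sigma$; choosing an open $W\subseteq V$ in $X$ with $x\in W$, $W\cap\Sigma\subseteq P$, and (using the closedness of the $N_j$, $j\notin J_x$) $W\cap N_j=\emptyset$ for $j\notin J_x$, I get for every $j\in J_x$ that $W\cap N_j\subseteq W\cap N_U=W\cap\Sigma\subseteq P\subseteq N_j$, hence $W\cap N_j=W\cap\Sigma$, and in particular $W\cap N_j=W\cap N_k$ for all $j,k\in J_x$. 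Thus $x$ is good for $(N_j)_{j\in J}$.

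The main obstacle is the dimension bookkeeping in the converse direction: one must rule out that the second structure carries branches of dimension strictly below $n$ (handled by the measure/Baire observation that finitely many lower‑dimensional submanifolds cannot cover an open piece of $\Sigma$), and then use invariance of domain to see that the genuine $n$‑dimensional branches through $x$ are open pieces of the single chart $\Sigma$. The remaining ingredients — closedness of branches, shrinking neighborhoods, and the set manipulations — are routine; I would lean on the description of finite dimensional submanifolds in Definition 4.20 of \cite{HWZ2} (recalled in Appendix \ref{fdimsubm}) to justify that branches are embedded submanifolds and that the elementary facts used here (submanifold of a submanifold, invariance of domain) are available in this setting.
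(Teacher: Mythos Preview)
Your argument is correct, but it takes a genuinely different route from the paper's. The paper exploits the one piece of structure you deliberately set aside: the weights. It observes that if $x$ is good for $(M_i)_{i\in I}$ then $I_y=I_x$ for all $y\in M_U$ near $x$, whence $\Theta(y)=\sum_{i\in I_x}\sigma_i=\Theta(x)$; that is, $\Theta$ is locally constant on its support near $x$. For the second structure one always has $J_y\subset J_x$ for $y\in N_U$ close to $x$ (by the same closedness-of-branches observation you make), so
\[
\Theta(y)=\sum_{j\in J_y}\tau_j\le\sum_{j\in J_x}\tau_j=\Theta(x),
\]
and equality together with $\tau_j>0$ forces $J_y=J_x$, which is precisely goodness for $(N_j)_{j\in J}$. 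The whole proof is a few lines and uses nothing beyond the upper semicontinuity of $y\mapsto J_y$ and the positivity of the weights.

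Your approach trades the weights for an intrinsic geometric characterization: $x$ is good iff $\supp\Theta$ is locally a single embedded $n$-submanifold. That is a pleasant statement in its own right and makes branching-independence visually obvious, but the price is the dimension comparison and the invariance-of-domain step, both of which need some unpacking in the M-polyfold setting with corners (you rightly flag this; the cleanest way through is to use the graph description from Appendix~\ref{fdimsubm}: once $T_yN_j=T_y\Sigma$, both are locally graphs over the same $C\cap N$, and containment of graphs forces the graph maps to agree, giving openness). The paper's argument sidesteps all of that machinery; on the other hand, your characterization would be the natural one if one were handed only the set $\supp\Theta$ without the function $\Theta$.
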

\begin {proof}
Assume that $x\in M_U$ is a good point with respect
 to $(M_i)_{i\in I}$. Then $I_y=I_x$ for all points $y\in M_U$
 near $x$ and hence
$$\Theta (y)=\sum_{i\in I_y}\sigma_i=\sum_{i\in I_x}\sigma_i=\Theta (x).$$
Now take the second branching structure $(N_j)_{j\in J}$. Then for the given point $x\in M_U=N_U$, there exists a neighborhood $O(x)$ such that $J_y\subset J_x$ for all $y\in O(x) $ and hence
$$\Theta (y)=\sum_{j\in J_y}\tau_j\leq \sum_{j\in J_x}\tau_j=\Theta (x).$$
Because $J_y\subset J_x$ and $\tau_j>0$,  we conclude from the two statements that necessarily $J_y=J_x$ for all $y$ close to $x$ which implies the  assertion.
\end{proof}

Denote by  $\sg$ the  collection of all good points in the support
$S_{\Theta}=\supp \Theta$ of the branched ep-subgroupoid $\Theta:X\to \Q^+$. The  complement $\sba:=\st\setminus \sg$ consists of  {\bf bad points}.

\begin{lem}\label{lembadgood}
The set $\sg$ of good points is open and dense  in $S_{\Theta}$ and the set of bad points  $
\sba=S_{\Theta}\setminus \sg$ is closed and nowhere dense in $S_{\Theta}$.
\end{lem}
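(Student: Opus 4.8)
The plan is to show that $\sg$ is open in $S_\Theta$ and that its complement $\sba$ is nowhere dense; density of $\sg$ then follows since a set with nowhere-dense complement is dense. Throughout I would work locally in a single chart $U = U(x)$ equipped with a local branching structure $(M_i)_{i\in I}$, which by Lemma \ref{lemgoodind} loses no generality, since goodness is independent of the chosen branching structure.

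\emph{Openness of $\sg$.} First I would observe that if $x\in U$ is a good point, then by definition there is a neighborhood $V=V(x)\subset U$ with $V\cap M_i = V\cap M_j$ for all $i,j\in I_x$. The key elementary fact I would extract is that on a small enough neighborhood the index set can only shrink: since each $M_i$ is a (relatively closed, by properness of the inclusion $M_i\to U$) submanifold, for $y$ near $x$ we have $I_y\subseteq I_x$; indeed $y\notin M_i$ for $i\notin I_x$ persists under small perturbations because the complement of $M_i$ is open. Now take any $y\in V\cap M_U$ with $y$ close to $x$. For $i,j\in I_y\subseteq I_x$ we have $V\cap M_i = V\cap M_j$, and intersecting with a small ball $W\subset V$ around $y$ gives $W\cap M_i = W\cap M_j$; hence $y$ is also a good point, with the same branching structure. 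This shows the set of good points is open in $S_\Theta = M_U$ locally, hence globally.

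\emph{The bad set is nowhere dense.} Since $\sba$ is closed (as the complement of the open set $\sg$), it suffices to show $\sba$ has empty interior in $S_\Theta$. Suppose, for contradiction, that some nonempty relatively open $V\cap M_U$ consists entirely of bad points. The strategy is a dimension/measure argument at the level of the finitely many branches. I would argue that the bad points are precisely the points lying in the "overlap locus" $\bigcup_{i\neq j}\{y\in M_i\cap M_j : \text{$M_i,M_j$ not locally equal at $y$}\}$ together with points where the local index set jumps; but more cleanly, a point $y\in M_U$ is good iff for each pair $i,j\in I_y$ the submanifolds $M_i$ and $M_j$ agree near $y$. If every point of $V\cap M_U$ were bad, then for each such $y$ there is a pair $i,j\in I_y$ with $M_i\neq M_j$ near $y$; since there are only finitely many pairs from $I$, by a pigeonhole/Baire-category argument one pair $(i,j)$ would have to be "responsible" on a set with nonempty interior in $M_i$, i.e. there is a relatively open subset of $M_i$ contained in $M_i\cap M_j$ on which $M_i\neq M_j$ — but two finite-dimensional submanifolds of the same dimension that coincide on a relatively open set must coincide there as submanifolds, a contradiction. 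Filling in the jump-of-index case is handled the same way, using that $\Theta$ is locally constant on $\sg$ (shown in the proof of Lemma \ref{lemgoodind}) to reduce to where $I_y$ is locally constant, which happens on a set whose complement is again a finite union of proper closed submanifold-intersections, hence nowhere dense.

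The main obstacle I anticipate is making the pigeonhole step rigorous in the branched (non-manifold) setting: one must be careful that the decomposition $S_\Theta\cap U = \bigcup M_i$ is by finitely many submanifolds \emph{of the same dimension} and that "$M_i$ and $M_j$ agree near $y$" is an open condition on $M_i\cap M_j$ relative to $M_i$, so that its failure locus, if it had nonempty interior in $M_i$, would force $M_i$ and $M_j$ to be distinct submanifolds agreeing on an open set — an outright contradiction. I would also need to invoke properness of the inclusions $M_i\to U$ (Definition \ref{def1}(4)) to guarantee the relevant sets are closed, so that $I_y\subseteq I_x$ near $x$ and so that the overlap loci are closed. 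The equivariance under $G_x$ plays no essential role here beyond ensuring the local picture is well posed.
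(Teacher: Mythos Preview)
Your proposal is correct, but the route differs from the paper's.

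For openness, the paper simply says ``open by definition''; your more careful argument---shrinking $V$ so that $I_y\subseteq I_x$ for $y\in V$, using properness of the inclusions $M_i\hookrightarrow U$---is the right way to justify that claim, and is fine.

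For nowhere-density the approaches diverge. The paper gives an elementary iterative descent: at a bad point $x$, group the branches through $x$ into equivalence classes under ``equal near $x$'', pick one class $\alpha$, and choose a nearby point $y_2\notin M_\alpha$ (possible since $x$ is bad). The index set through $y_2$ is strictly smaller than through $x$. If $y_2$ is good, stop; otherwise repeat. After at most $|I|$ steps, each of size $<\varepsilon/|I|$, one lands on a good point within $\varepsilon$ of $x$. This is constructive and uses nothing beyond the finiteness of $I$.

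Your Baire/pigeonhole argument is also valid: restricting to a single branch $M_{i_0}$ meeting the putative open bad set, the closed ``disagreement loci'' $B_{ij}\cap M_{i_0}$ cover an open subset of $M_{i_0}$, so one of them has nonempty interior $W$ in $M_{i_0}$; then $W\subset M_i\cap M_j$ with $W$ open in $M_{i_0}$ forces (by equality of dimensions) $W$ open in $M_i$ and $M_j$, hence $M_i$ and $M_j$ agree near each point of $W$---contradicting $W\subset B_{ij}$. The paper's argument is more elementary and direct; yours is more structural and makes explicit why the same-dimension hypothesis matters.

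One simplification: your separate ``jump-of-index case'' is a red herring. A point $y$ is bad precisely when some pair $i,j\in I_y$ satisfies $M_i\neq M_j$ near $y$; there is no additional case, and the pigeonhole argument on a fixed branch $M_{i_0}$ already handles everything without needing to stratify by where $I_y$ is locally constant.
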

\begin{proof}
The set $\sg$ is open by  definition. Hence its complement $\sba$ is closed and we
show that it is nowhere dense.
Pick an arbitrary point $x\in \sba$  and a local branching structure $(M_j)_{j\in I}$ around $x$.  Not all manifolds $M_i$, $i\in I$,  near $x$  need to coincide. Define the
equivalence relation $i\sim_x j$ provided $M_i=M_j$ near $x$. Since
$x$ is a bad point,  we must have at least two equivalence classes.
Define $y_1=x$ and let $\alpha_1\subset I_{y_1}$ be an equivalence
class. Denote by $M_{\alpha_1}$ the local manifold representing
$\alpha_1$. The topology of $M_U$ is metrizable as a subset of the
metrizable $X$. Denote by $d$ a metric on $M_U$ and by $\abs{I}$ the cardinality of the index set $I$. Take $\varepsilon >0$. Then we find a point $y_2\not\in M_{\alpha_1}$  and $\varepsilon_1$  satisfying $d(y_2,y_1)<\varepsilon_1<\frac{\varepsilon}{\abs{I}}$.   There is   $\varepsilon_2<\frac{\varepsilon}{\abs{I}}$ so that every $z\in M_U$
with $d(z,y_2)<\varepsilon_2$ does not belong to $M_{\alpha_1}$. If
$y_2$ is a good point, we are done. Otherwise $y_2$ is a  bad  point and we define
again equivalence relation as before  where  $y_1=x$  is replaced by $y_2$. Since $y_2$ is a bad point, there are at least two equivalence classes.  Let $\alpha_2\subset I_{y_2}$ be one of them.  Then $y_2\in M_{\alpha_2}$. Note that the elements in all
the new equivalence classes do not contain any element in
$\alpha_1$. Now  we construct $y_3$ such that
$d(y_2,y_3)<\varepsilon_3<\frac{\varepsilon}{\abs{I}}$ which does not
belong to  $M_{\alpha_1}\cup M_{\alpha_2}$. After a finite number of
steps not exceeding $\abs{I}$ we find a point $y_k$ satisfying
$d(y_k,y_1)<\varepsilon$ which is a good point. This proves the desired
result.
\end{proof}

The next lemma shows  that the notion of being a good point is
compatible with morphisms.
\begin{lem}
Assume that $\varphi:x\to y$ is a morphism in ${\bf X}$ between two points $x, y$ in $ S_{\Theta}$. If $x$ is a good point, then  $y$ is also a good point.
\end{lem}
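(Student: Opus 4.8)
The plan is to use the local structure of morphisms in an ep-groupoid provided by Proposition \ref{prop1.2}, together with the already-established fact (Lemma \ref{lemgoodind}) that goodness is independent of the local branching structure. So suppose $\varphi:x\to y$ is a morphism between points of $S_\Theta$ and that $x$ is good. Choose an open neighborhood $U$ of $x$ on which $\Theta$ is represented by a local branching structure $(M_i)_{i\in I}$ with weights $(\sigma_i)_{i\in I}$, and on which the natural representation $\psi:G_x\to \dif(U)$ acts. Since $\varphi$ extends to a local sc-diffeomorphism $\Phi := t\circ s^{-1}$ defined near $x$ with $\Phi(x)=y$, after shrinking $U$ we may assume $\Phi$ is defined on all of $U$ and maps $U$ diffeomorphically onto an open neighborhood $U'=\Phi(U)$ of $y$.

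First I would observe that $\Phi$ carries the local branching structure near $x$ to a local branching structure near $y$. Indeed, by functoriality of $\Theta$ and the fact that $\Phi$ is realized by the morphism $\varphi$ (more precisely, by the sc-diffeomorphism $t\circ s^{-1}$ induced by $\varphi$), we have $\Theta(\Phi(z))=\Theta(z)$ for all $z\in U$, because $\Phi(z)$ and $z$ are connected by a morphism (namely $\Gamma(\cdot,z)$-type extension of $\varphi$, or directly the uniqueness statement in Proposition \ref{prop1.2}). Since $\Phi$ is a finite-dimensional-submanifold-preserving sc-diffeomorphism, the images $M_i':=\Phi(M_i)$ are finite dimensional submanifolds of $X$ of the same common dimension, in good position to $\partial X$ (using $d\circ\Phi = d$, which follows from $\Phi$ being a local sc-diffeomorphism and the corner recognition result), with $\supp\Theta\cap U' = \bigcup_{i\in I}M_i'$ and inclusion maps proper. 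Assigning the weight $\sigma_i$ to $M_i'$, property (3) of Definition \ref{def1} holds at $y$ because for $w=\Phi(z)\in\supp\Theta\cap U'$ we have $w\in M_i'\iff z\in M_i$, hence $\sum_{\{i\mid w\in M_i'\}}\sigma_i = \sum_{\{i\mid z\in M_i\}}\sigma_i = \Theta(z)=\Theta(w)$. Thus $(M_i')_{i\in I}$ with weights $(\sigma_i)_{i\in I}$ is a legitimate local branching structure for $\Theta$ on $U'$ (property (5), the natural representation of $G_y$, is available by Proposition \ref{prop1.2} applied at $y$, possibly after further shrinking).

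Next, goodness transfers through $\Phi$. Since $x$ is good, there is a neighborhood $V\subset U$ of $x$ with $V\cap M_i = V\cap M_j$ for all $i,j\in I_x$ (note $I_y^{\text{index}}$ at a nearby point equals $I_x$, in the notation of the good-point definition). Set $V':=\Phi(V)$. Because $w=\Phi(z)\in M_i'\iff z\in M_i$, the index set of $y$ with respect to $(M_i')$ equals $I_x$, and $V'\cap M_i' = \Phi(V\cap M_i) = \Phi(V\cap M_j) = V'\cap M_j'$ for all $i,j\in I_x$. Hence $y$ is a good point with respect to the branching structure $(M_i')_{i\in I}$.

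Finally I would invoke Lemma \ref{lemgoodind}: being a good point is a property of $\Theta$ and not of the chosen local branching structure, so $y$ is a good point with respect to \emph{any} local branching structure representing $\Theta$ near $y$; in particular it is a good point in the sense of the definition, i.e. $y\in\sg$. The main obstacle, and the only place requiring genuine care, is the first step — verifying that $\Phi=t\circ s^{-1}$ does indeed intertwine $\Theta$ with itself, i.e. that $\Theta(\Phi(z))=\Theta(z)$ for all $z$ near $x$ and not merely at $x$ itself; this rests on the uniqueness-of-extension property of morphisms in an ep-groupoid (Proposition \ref{prop1.2}) together with the functoriality of $\Theta$, and on the compatibility of finite dimensional submanifolds and of the degeneracy index $d$ with sc-diffeomorphisms. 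Once this is in place, everything else is bookkeeping.
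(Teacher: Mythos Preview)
Your proof is correct and follows exactly the same approach as the paper: extend $\varphi$ to the local sc-diffeomorphism $t\circ s^{-1}$, observe that it carries local branches near $x$ to local branches near $y$, and read off goodness of $y$ from that of $x$. The paper's argument is a two-sentence sketch of precisely this idea; you have simply supplied the details (that $\Phi$ intertwines $\Theta$ with itself, that the pushed-forward branches form a legitimate local branching structure, and the appeal to Lemma~\ref{lemgoodind}) which the paper leaves implicit.
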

\begin{proof}
The  morphism $\varphi:x\to y$ extends to an sc-diffeomorphism $h=t\circ s^{-1}:V(x)\to V(y)$. This sc-diffeomorphism maps local branches  near $x$ onto local branches  around $y$.  This implies the result.
\end{proof}

It follows from the previous lemmata that being a good point is already defined on the orbit space $\abs{S_{\Theta}}\subset \abs{X}$ of the support of $\Theta$.

If $x\in \sg$ is a good point, then there exists a well defined tangent space $T_xS_{\Theta}$ because all the branches $M_j$ containing $x$ coincide near $x$. Hence, in view of Lemma \ref{lembadgood}, the support $S_{\Theta}$ has a tangent space at points in the complement of a closed nowhere dense set.  Moreover, if $\varphi:x\to y$ is a morphism between two good points, then it has an extension to a local sc-diffeomorphism $h:V(x)\to V(y)$.  The  tangent map $Th(x)$ at the point $x$, which we denote by
$T\varphi$,  maps  the tangent space $T_xS_\Theta$ bijectively onto the tangent space
$T_yS_\Theta$.

\subsection{The Tangent of the Support}
Having proved results about good and bad points, we are in a position  to prove that
the tangent set $TS_{\Theta}$ of the support   $S_{\Theta}$  is independent of the branching structure used for its definition.  We have just seen that at a good point $x\in S_{\Theta}$ the tangent space $T_xS_{\Theta}$ is a linear subspace of $T_xX$ which is independent of the local branching structure $(M_i)_{i\in I}$. If
$x\in S_{\Theta}$ is  a bad point, then $T_xS_\Theta$ is defined as the union of the tangent spaces $T_x M_i$ over $i\in I_x$. We fix a tangent space $T_xM_{i_0}$ and let $(N_j)_{j\in J}$ be a second local branching structure around the bad point $x$. By  Lemma \ref{lembadgood}, there exists a sequence $(x_k)$ of good points on the manifold $M_{i_0}$ converging to the bad point $x$. Due to Lemma \ref{lemgoodind},  the points $x_k$ are also good points of $N_{j_k}$ for some $j_k\in J.$ After taking a subsequence, we may assume that $j_k=j_0$ is constant in $k$ and hence $T_{x_k}N_{j_0}=T_{x_k}M_{i_0}$. As $x_k\to x$,  we  conclude  $T_xN_{j_0}=T_xM_{i_0}$. This shows that the definition of $TS_{\Theta}$ is indeed independent of the choice of a local branching structure.

As a side remark we observe that there is another way of looking at the construction of the tangent $TS_{\Theta}$. Let  ${(M_i)}_{i\in I}$  with the associated weights  $(\sigma_i)_{i\in I}$  be  a local branching structure  for $\Theta$ covering $\abs{M_U}$ and let $\tau:TX\rightarrow X^1$ be  the tangent projection.
Define $T\Theta:TX\rightarrow {\mathbb Q}^+$ by $T\Theta(h)=\Theta(x)$ where
$\tau(h)=x$ and  $h\in T_xM_i$ for some $i\in I$. Otherwise,  define $T\Theta(h)=0$. This definition does not depend on the choices involved. Then $T\Theta$ is a branched ep-subgroupoid of $TX$ and
$$
S_{T\Theta}=TS_\Theta
$$
as previously defined. The set of linear local branches $(TM_i)_{i\in I}$ with the same weights $(\sigma_i)_{i\in I}$ defines  a local branching structure for $TS_\Theta\vert \abs{M_U}$.

\subsection{Essential Points  of Local Branches}\label{essentialp}
Next we shall study  the relationship between two local
branching structures on the open neighborhood $U(z_0)\subset X$ of the point $z_0\in \supp \Theta$. We assume that on $U=U(z_0)$ we have the natural representation $g\mapsto \varphi_g \in \text{Diff}_{\ssc}(U)$ of the isotropy group $G_{z_0}$ and a local  branching structure $(M_i)_{i\in I}$ with the weights $(\sigma_i)_{i\in I}$. As usual   $M_U=\bigcup_{i\in I}M_i=\supp \Theta \cap U$.

We denote  by $TM_U$ the restriction of the tangent set $T\st$ to the points in $M_U$. We know that $\varphi_g (M_U)=M_U$ for all $g\in G_{z_0}$ and we have  an induced map $T\varphi_g :TM_U\to TM_U$. Any morphism $\varphi:x\to y$ in ${\bf X}$ between two points $x, y$ in $U$ is of the form $\varphi=\Gamma (g, x)$ for a unique $g\in G_{z_0}$ and $\varphi_g(x)=y$.

\begin{defn} Let $M$ be a smooth $n$-dimensional manifold with boundary with corners and let $K$ be a subset of  $M$. Then the tangent space $T_zM$ at a point $z\in K$ has a {\bf distinguished linear subspace}, denoted by $T_z^KM\subset T_zM$, and characterized by the following property. Given any chart $\varphi:U(z)\subset M\to O(0)\subset [0,\infty )^d\times \R^{n-d}$ satisfying $\varphi (z)=0$, then $T_z^KM$ is the subspace of $T_zM$ such that $T\varphi  (z)(T^K_zM)\subset \R^n$ is the linear hull of all unit vectors $e\in \R^n$ for which there exists a sequence $z_k\in K\setminus \{z\}$ satisfying
\begin{itemize}
\item[$\bullet$] $\lim z_k=z$
 \item[$\bullet$] $\lim \dfrac{\varphi (z_k)}{\abs{\varphi  (z_k)}}=e$.
 \end{itemize}
If there is no such sequence,  then we put $T_z^KM=\{0\}$.
\end{defn}

The definition of the subspace $T^{K}_zM$ is independent of the choice of  the chart $\varphi$. If $(M_j)_{j\in I}$ is the local branching structure of $U$,
we associate with a triple $(i, g, j)$,
in which $i, j\in I$ and $g\in G_{z_0}$,
the closed subset $K (i, g, j)$ of $M_i$ defined by
$$
K{(i,g,j)}=\{x\in M_i\vert \, \varphi_g(x)\in M_j\}.
$$

\begin{defn}
Let $(M_i)_{i\in I}$ with the weights $(\sigma)_{i\in I}$  be a local  structure for $\Theta$ in
the open set $U$.
\begin{itemize}
\item[(1)]
A  point $x\in M_i$ is {\bf strongly  $(i,g,j)$-essential}, if there exists a sequence
$(x_k)\subset K{(i,g,j)}\subset M_i$ satisfying
\begin{itemize}
\item[$\bullet$] $\lim x_k=x$.
\item[$\bullet$] $T^{K{(i,g,j)}}_{x_k}M_i=T_{x_k}M_i$ for all $k$.
\item[$\bullet$] $T^{K{(j,g^{-1},i)}}_{\varphi_g(x_k)}M_j=T_{\varphi_g(x_k)}M_j$ for all $k$.
\end{itemize}
\item[(2)]  A  point $x\in M_U$ is called {\bf $(i,g,j)$-essential}, if there exist finite  sequences   $i_0=i,i_1,\ldots ,i_k=j$ and $g_0,\ldots , g_{k-1}\in G_{z_0}$  such that
$g=g_{k-1}\circ \cdots \circ g_0$ and if $x_0=x$,  $x_l=\varphi_{g_{l-1}}(x_{l-1})$  for $1\leq l\leq k$, then the points $x_l$  are strongly  $(i_l,g_l,i_{l+1})$-essential.
\end{itemize}
\end{defn}

We point out that if $x\in M_i$ is $(i, g, j)$-essential,
then $\varphi_g (x)\in M_j$ is $(j, g^{-1}, i)$-essential.
Moreover, if $x\in M_i$ is $(i, g, j)$-essential and
 if $\varphi_g (x)$ is $(j, h, l)$-essential, then $x$ is
 $(i,h\circ g,l)$-essential.

If $x\in  K(i, g, j)$, then $\varphi_g (x)\in M_j$ and the tangent map $T_{x}\varphi_g :T_xX\to T_{\varphi_g (x)}X$ is an isomorphism. However,  this does not imply that
$T_{x}\varphi_g  (T_xM_i)\subset T_{\varphi_g (x)}M_j$.
As the next lemma shows,  this holds true for $(i, g, j)$-essential points.

\begin{lem}\label{newlem}
If $x\in M_i$ is  $(i, g, j)$-essential, then
$T_{x}\varphi_g  (T_xM_i)\subset T_{\varphi_g (x)}M_j$ and,  in particular for dimension reasons,
the restriction
$$T\varphi _g (x)\vert T_xM_i:T_xM_i\to T_{\varphi_g (x)}M_j$$
is a linear isomorphism.
\end{lem}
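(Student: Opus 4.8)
The plan is to reduce the general statement to the case of a single triple $(i,g,j)$ with $x$ \emph{strongly} $(i,g,j)$-essential, and then to exploit the $T^{K}_zM$-machinery at the approximating sequence of points. First I would verify the reduction: if $x$ is $(i,g,j)$-essential via a chain $i_0=i,\dots,i_k=j$ with group elements $g_0,\dots,g_{k-1}$ and intermediate points $x_l=\varphi_{g_{l-1}}(x_{l-1})$, then each $x_l$ is strongly $(i_l,g_l,i_{l+1})$-essential, so once the strongly-essential case is settled we get $T_{x_l}\varphi_{g_l}(T_{x_l}M_{i_l})\subset T_{x_{l+1}}M_{i_{l+1}}$ for every $l$; composing these inclusions and using the chain rule $T_x\varphi_g=T_{x_{k-1}}\varphi_{g_{k-1}}\circ\cdots\circ T_{x_0}\varphi_{g_0}$ (valid since $\varphi_g=\varphi_{g_{k-1}}\circ\cdots\circ\varphi_{g_0}$ on a suitable neighborhood, by the homomorphism property in Proposition~\ref{prop1.2}) yields $T_x\varphi_g(T_xM_i)\subset T_{\varphi_g(x)}M_j$. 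Since $T_x\varphi_g$ is an isomorphism of $T_xX$ and $\dim M_i=\dim M_j$, the inclusion of subspaces of equal dimension forces equality, giving the asserted linear isomorphism.

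For the strongly-essential case, fix a sequence $(x_k)\subset K(i,g,j)$ with $x_k\to x$, $T^{K(i,g,j)}_{x_k}M_i=T_{x_k}M_i$ and $T^{K(j,g^{-1},i)}_{\varphi_g(x_k)}M_j=T_{\varphi_g(x_k)}M_j$. The key point is that at each $x_k$ the inclusion $T_{x_k}\varphi_g(T_{x_k}M_i)\subset T_{\varphi_g(x_k)}M_j$ holds: indeed, by the definition of $T^{K(i,g,j)}_{x_k}M_i$, the full tangent space $T_{x_k}M_i$ is spanned by limits of difference quotients $\varphi(z_m)/|\varphi(z_m)|$ for sequences $z_m\in K(i,g,j)$, $z_m\to x_k$; applying the sc-diffeomorphism $h=t\circ s^{-1}$ (the extension of $\varphi_g$) and using that $h(z_m)=\varphi_g(z_m)\in M_j$ with $\varphi_g(z_m)\to\varphi_g(x_k)$, one sees that $T_{x_k}h$ carries each such limiting unit vector into a tangent direction realized by a sequence in $M_j$, hence into $T_{\varphi_g(x_k)}M_j$ (here smoothness of $M_j$ as a submanifold chart guarantees the limit direction lies in the tangent space). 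Since $T_{x_k}\varphi_g$ is an isomorphism and the two manifolds have equal dimension, $T_{x_k}\varphi_g(T_{x_k}M_i)=T_{\varphi_g(x_k)}M_j$ for every $k$.

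Finally I would pass to the limit $k\to\infty$. The submanifolds $M_i$ and $M_j$ are finite-dimensional submanifolds of $X$ of a fixed dimension, so the assignments $x_k\mapsto T_{x_k}M_i$ and $\varphi_g(x_k)\mapsto T_{\varphi_g(x_k)}M_j$ are continuous in the appropriate Grassmannian of the ambient tangent spaces (working in submanifold charts), and $T\varphi_g$ depends continuously on the base point since $h$ is sc-smooth. Therefore the relation $T_{x_k}\varphi_g(T_{x_k}M_i)\subset T_{\varphi_g(x_k)}M_j$ closes up in the limit to $T_x\varphi_g(T_xM_i)\subset T_{\varphi_g(x)}M_j$, and once more equality of dimensions promotes the inclusion to the stated linear isomorphism $T\varphi_g(x)|T_xM_i:T_xM_i\to T_{\varphi_g(x)}M_j$.

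\textbf{Main obstacle.} The delicate step is the one at each fixed $x_k$: showing that $T_{x_k}h$ maps \emph{every} direction in $T_{x_k}M_i$ (not merely the directions obviously tangent to $K(i,g,j)$) into $T_{\varphi_g(x_k)}M_j$. This is exactly where both essentiality hypotheses are used — the condition $T^{K(i,g,j)}_{x_k}M_i=T_{x_k}M_i$ guarantees that $K(i,g,j)$ is ``tangentially full'' at $x_k$ so that its image under $h$ probes all of $T_{\varphi_g(x_k)}M_j$, and the symmetric condition at $\varphi_g(x_k)$ handles the reverse inclusion; care is needed to make the chart-based limiting argument for $T^K_zM$ interact correctly with the sc-diffeomorphism $h$ and with the submanifold structure, and to ensure boundary/corner strata are respected (the $M_i$ are in good position to $\partial X$). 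The rest is continuity of tangent spaces and a dimension count.
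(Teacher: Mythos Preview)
Your proposal is correct and follows essentially the same three-step architecture as the paper: (i) prove the inclusion at points where $T_x^{K(i,g,j)}M_i=T_xM_i$ by pushing forward the spanning unit directions realized by sequences in $K(i,g,j)$, (ii) pass to the limit along the approximating sequence $(x_k)$ to handle the strongly essential case, and (iii) compose along the chain to get the general essential case. One small remark: in your ``Main obstacle'' you invoke the symmetric hypothesis $T^{K(j,g^{-1},i)}_{\varphi_g(x_k)}M_j=T_{\varphi_g(x_k)}M_j$ to get a reverse inclusion, but this is unnecessary --- the paper (and your own earlier sentence) already obtains equality from the single inclusion plus the equal-dimension count, so only the first of the two conditions in the definition of ``strongly essential'' is actually used here.
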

\begin{proof}
First, we  prove  the conclusion of the lemma under the additional  assumption that
$T_x^{K(i, g, j)}M_i=T_xM_i$. The set $K(i,g,j)$ is bijectively mapped onto the set  $K(j,g^{-1},i)$  by the map $\varphi_g$. Since $M_j$ is a submanifold of $X$,  there exists a smooth chart
$\psi:O(M_i,x)\rightarrow O([0,\infty)^k\times {\mathbb R}^{n-k},0)$ so that
$\psi^{-1}:O([0,\infty)^k\times {\mathbb R}^{n-k},0)\rightarrow X$ is sc-smooth. Then the composition $\varphi_g\circ \psi^{-1}:O([0,\infty)^k\times {\mathbb R}^{n-k},0)\rightarrow X$ is sc-smooth. By assumption,  $T_x^{K(i, g, j)}M_i=T_xM_i$. Hence there exist at least $n$ linearly independent unit vectors $a_1,\ldots ,a_n$ in ${\mathbb R}^n$
so that for any of them, say $a=a_l$,  there exists a sequence $z_k$ in
$\psi(K(i,g,j)\cap O(M_i,x))$ so that $z_k\rightarrow 0$ and $\frac{z_k}{|z_k|}\rightarrow a$.
Then  $\varphi_g\circ\psi^{-1}(z_k)$ is a sequence in $M_j$ converging to $y=\varphi_g(x)$. By construction,
$$
T\varphi_g(x)\circ T\psi^{-1}(0)a\in T_yM_j.
$$
Since $T\psi^{-1}(0)a$ belongs to $T_x^{K(i,g,j)}M_i$,  we conclude that
$T_xM_i$ has a basis mapped by $T\varphi_g(x)$ into $T_{\varphi_g(x)}M_j$.
The  map $T\varphi_g(x)$ is an isomorphism and the submanifolds $M_i$ and $M_j$ are assumed to have  the same dimension $n$. Consequently,
\begin{itemize}
\item if $T_x^{K(i, g, j)}M_i=T_xM_i$,  then $T\varphi_g(x):T_xM_i\rightarrow T_{\varphi_g(x)}M_j$ is an isomorphism.
    \end{itemize}
Next, we assume that $x\in K(i,g,j)$  is strongly $(i,g,j)$-essential. Hence $x$ is the limit of a sequence $x_k\in K(i,g,j)$ so that for every $x_k$ we have $T_{x_k}^{K(i, g, j)}M_i=T_{x_k}M_i$. It follows from the sc-smoothness of $\varphi_g$,  after  passing to the limit,
that $T\varphi_g(x)$ maps $T_xM_j$ into $T_{\varphi_g(x)}M_j$ and we draw the same conclusion as before. Hence we have proved
\begin{itemize}
\item if $x$ is strongly $(i,g,j)$-essential then $T\varphi_g(x):T_xM_i\rightarrow T_{\varphi_g(x)}M_j$ is an isomorphism.
    \end{itemize}
Finally, assume that $x$ is only $(i,g,j)$-essential. By the definition of an $(i,g,j)$-essential point, there are finite sequences  $i_0=i,i_1,\ldots ,i_k=j$ and $g_0,\ldots, g_{k-1}\in G_x$ such that $g=g_{k-1}\circ\cdots \circ g_0$ and if $x_0=x$ and $x_l=\varphi_{g_{l-1}}(x_{l-1})$ for $1\leq l \leq k$, then  the points $x_l$ are strongly $(i_l,g_l,i_{l+1})$-essential.
By the previous discussion
$$
T\varphi_{g_{l-1}}(x_{l-1}):T_{x_{l-1}}M_{i_{l-1}}\to
T_{x_{l}}M_{i_{l}}
$$
is an isomorphism.  Therefore,  the linearization of the map
$\varphi_g=\varphi_{g_{k-1}}\circ\cdots \circ \varphi_{g_0}$ at  $x$  is a composition of isomorphisms and the result follows.
\end{proof}

We  denote by  $\ke(i, g, j)$  the collection of all points $x\in K(i, g, j)$ which are $(i,g, j)$-essential. The complement
$$\kin (i, g, j):=K(i, g, j)\setminus \ke (i, g, j).$$
consists of {\bf inessential points}.

Now assume  that the branched ep-subgroupoid $\Theta:X\to \Q^+$ is $n$-dimensional. Then a   subset $K\subset M_j$ is said to be of  {\bf measure zero}, if for every smooth chart $\varphi :U\subset M_j\to \R^n$, the set $\varphi (U\cap K)\subset \R^n$ is of $n$-dimensional  Lebesgue measure zero.

\begin{lem}\label{lem0}
For every triple $(i,g,j)$,  the set $\kin (i,g,j)\subset M_i$ of inessential points is  open in
$K(i,g,j)$ and of  measure zero.
\end{lem}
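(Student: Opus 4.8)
The plan is to analyze $\kin(i,g,j)$ locally by comparing the tangent structure of $K(i,g,j)\subset M_i$ with that of its image $K(j,g^{-1},i)\subset M_j$ under the sc-diffeomorphism $\varphi_g$. First I would fix a smooth chart for $M_i$ near a point of $K(i,g,j)$ and a smooth chart for $M_j$ near the image point, so that $\varphi_g$ is represented (in these charts, using the sc-smoothness) as a smooth map between finite-dimensional chart domains that sends $K(i,g,j)$ bijectively onto $K(j,g^{-1},i)$; this reduces everything to a statement about closed subsets of finite-dimensional manifolds and a diffeomorphism $\varphi_g$ restricted to them.

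For the openness claim I would argue as follows. A point $x\in K(i,g,j)$ is \emph{not} strongly $(i,g,j)$-essential iff there is no sequence $x_k\to x$ in $K(i,g,j)$ with $T^{K(i,g,j)}_{x_k}M_i=T_{x_k}M_i$ \emph{and} $T^{K(j,g^{-1},i)}_{\varphi_g(x_k)}M_j=T_{\varphi_g(x_k)}M_j$; equivalently, the set of ``full-tangent'' points of $K(i,g,j)$ (those $y$ with $T^{K(i,g,j)}_yM_i=T_yM_i$, intersected with the corresponding condition on the $M_j$ side via $\varphi_g$) has $x$ outside its closure. Since being outside the closure of a fixed set is an open condition, the strongly-inessential points form an open subset of $K(i,g,j)$. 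For genuinely $(i,g,j)$-essential versus inessential points one passes through finite chains $i_0=i,\dots,i_k=j$; but since $G_{z_0}$ is finite there are only finitely many possible chains, and $x$ is essential iff \emph{some} chain works, so the essential set is a finite union of sets each of which is relatively open (being the intersection, along the chain, of preimages under the $\varphi_{g_l}$ of strongly-essential sets, which are open by the previous sentence). Hence $\ke(i,g,j)$ is open in $K(i,g,j)$ and $\kin(i,g,j)$ is closed in $K(i,g,j)$ — wait, that is the reverse of the claim, so I must be careful: the claim is that $\kin$ is \emph{open}, so the right statement to prove is that $\ke(i,g,j)$ is closed in $K(i,g,j)$, i.e. a limit of essential points is essential. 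This follows because $G_{z_0}$ is finite: along a convergent sequence of essential points one may pass to a subsequence on which the associated chain $(i_0,\dots,i_k;g_0,\dots,g_{k-1})$ is constant, and the strong-essentiality conditions (each of which asserts that a point lies in the closure of a certain ``full-tangent'' locus) are preserved under limits, so the limit point is $(i,g,j)$-essential with the same chain.

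For the measure-zero claim, the key observation is that if $x\in K(i,g,j)$ has $T^{K(i,g,j)}_xM_i=T_xM_i$ then (by Lemma \ref{newlem}, or rather its proof) $x$ is strongly $(i,g,j)$-essential once the symmetric condition on the $M_j$-side also holds; one reduces to showing that the set of points of $K(i,g,j)$ at which either $T^{K(i,g,j)}_xM_i\subsetneq T_xM_i$ or $T^{K(j,g^{-1},i)}_{\varphi_g(x)}M_j\subsetneq T_{\varphi_g(x)}M_j$ has measure zero in $M_i$. Working in a chart, this is a statement about a closed set $K\subset\R^n$: the set of $z\in K$ where the ``tangent cone span'' $T^K_zM$ is a proper subspace of $\R^n$ is Lebesgue-null. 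This is a standard density/Lebesgue-point type fact — at a point of Lebesgue density one of $K$ the tangent span must be all of $\R^n$, and by the Lebesgue density theorem almost every point of the measurable set $K$ is a density point (if $K$ is not measurable one covers it by a $G_\delta$ of the same dimension, but here $K(i,g,j)$ is closed, hence measurable in the chart). Pulling back through $\varphi_g$, which is a diffeomorphism and hence preserves null sets, the $M_j$-side bad set also pulls back to a null subset of $M_i$. Then the set of non-strongly-essential points of $K(i,g,j)$ is contained in a countable union (over a chart atlas) of null sets, hence null; and finally an inessential point $x$ means no chain of strongly-essential points exists, in particular $x$ itself fails to be strongly $(i,g,j)$-essential along the trivial one-step chain, so $\kin(i,g,j)$ is contained in the already-null set of non-strongly-essential points of $K(i,g,j)$.

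The main obstacle I anticipate is the measure-zero argument: one must be precise that ``$T^K_zM=T_zM$ for a.e.\ $z\in K$'' — the natural tool is the Lebesgue density theorem together with the elementary fact that a positive-density point of a set cannot have its tangent-cone span lie in a proper hyperplane (since a neighborhood-proportion of mass near $z$ would then have to concentrate near a lower-dimensional subspace, contradicting positive density). Setting this up cleanly, and making sure the interplay between the $M_i$-side and the $\varphi_g$-transported $M_j$-side conditions is handled symmetrically (using that $\varphi_g$ carries $K(i,g,j)$ onto $K(j,g^{-1},i)$ and preserves both the submanifold structure and null sets), is where the real care is needed; the openness/closedness bookkeeping over the finitely many chains in $G_{z_0}$ is routine by comparison.
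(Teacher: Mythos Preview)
Your approach is essentially the paper's: closedness of $\ke(i,g,j)$ gives the openness claim, and a Lebesgue density argument gives the measure-zero claim. Two points deserve comment.

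First, your opening claim that $\varphi_g$ can be represented in charts as a smooth map between finite-dimensional chart domains for $M_i$ and $M_j$ is false: $\varphi_g$ is an sc-diffeomorphism of the ambient $U$, and there is no reason it should restrict to a smooth map $M_i\to M_j$ off of $K(i,g,j)$ --- that is precisely what Proposition~\ref{prop2.9} establishes, and only at essential points. Fortunately you never actually use this reduction in the core argument.

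Second, the step ``pulling back through $\varphi_g$, which is a diffeomorphism and hence preserves null sets'' has the same defect: $\varphi_g$ is not a diffeomorphism between the $n$-manifolds $M_i$ and $M_j$, so preservation of $n$-dimensional null sets between them is not automatic. The paper's proof sidesteps this by asserting that if $x$ is not strongly $(i,g,j)$-essential then already $T_x^{K(i,g,j)}M_i\neq T_xM_i$, thus treating only the $M_i$-side condition. This one-sided reduction can in fact be justified: the first bullet in the proof of Lemma~\ref{newlem} shows that $T_x^{K(i,g,j)}M_i=T_xM_i$ alone forces $T\varphi_g(x)\colon T_xM_i\to T_{\varphi_g(x)}M_j$ to be an isomorphism, and from this (using that sequences in $K(i,g,j)$ realising each tangent direction map under $\varphi_g$ to sequences in $K(j,g^{-1},i)\subset M_j$) one deduces $T^{K(j,g^{-1},i)}_{\varphi_g(x)}M_j=T_{\varphi_g(x)}M_j$. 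So the non-strongly-essential locus is already contained in the $M_i$-side bad set, and your density argument there suffices.

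Finally, your assertion ``finitely many possible chains'' needs the observation that a chain with a repeated pair $(i_l,\,g_{l-1}\cdots g_0)$ can be shortened without changing the product $g$, bounding chain length by $|I|\cdot|G_{z_0}|$; then $\ke(i,g,j)$ is a \emph{finite} union of closed sets, hence closed. The paper's proof writes only ``by construction'' for this step.
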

\begin{proof}
By construction, the set $\ke (i, g, j)$ of essential points is closed in $K (i, g, j)$.
Hence  its complement  is open in $K(i,g,j)$. By the definition of being
$(i,g,j)$-essential,  it suffices to prove  that the set of points in
$M_i$ which are  not strongly $(i,g,j)$-essential is  of measure zero.
To do  so we
pick a point $x\in \kin (i,g,j)$ which is not strongly
$(i,g,j)$-essential and choose  a chart $\varphi:V(x)\subset
M_i\rightarrow O(0)\subset [0,\infty)^d\times {\mathbb R}^{n-d}$ satisfying $\varphi (x)=0$. We abbreviate the image set in $\R^n$ by $K=\varphi [\kin (i, g, j)\cap V(x)]$. It suffices to show that $0$ is a point of Lebesgue density $0$, i.e.,
$$
\lim_{\varepsilon\rightarrow 0}
\frac{1}{\varepsilon^n}\mu(B_\varepsilon\cap K)=0,
$$
where $\mu$ stands for the $n$-dimensional   Lebesgue measure  and  $B_{\varepsilon}$ is the  ball of radius $\varepsilon$ centered at the origin. Since $x$ is not strongly $(i, g, j)$-essential, we have $T_x^{K(i, g, j)}M_i\neq T_xM_i$. Hence by
composing the chart with a rotation in the image,  we may assume
that
$$\Sigma:=T\varphi(x)(T_x^{K(i,g,j)}M_i)\subset\R^{n-1}\times \{0\}.$$
Now  take a number $\delta>0$, define the set
$\Gamma_\delta=\{(a, b)\in \R^{n-1}\times \R\vert \, \abs{b}\leq \delta \abs{a}\}$ and consider the subset
$(B_{\varepsilon}\setminus \Gamma_{\delta})\cap K$ of  $\R^n$. If, for a given a sequence $\varepsilon_k\to 0$,  there exists a sequence of points $x_k$ in
$(B_{\varepsilon_k}\setminus \Gamma_{\delta})\cap K$, then  we arrive at a contradiction to the definition of $\Sigma$. Consequently,   the set
$B_{\varepsilon}\cap K$  is contained in $\Gamma_{\delta}$ if $\varepsilon$ is sufficiently small. Hence
\begin{equation*}
\begin{split}
\limsup_{\varepsilon\to  0}\frac{1}{\varepsilon^n}\mu(B_\varepsilon\cap K)\leq
\limsup_{\varepsilon\rightarrow
0}\frac{1}{\varepsilon^n}\mu(B_\varepsilon\cap \Gamma_\delta)\leq
C(\delta)
\end{split}
\end{equation*}
for a constant $C(\delta)$ satisfying $C(\delta)\rightarrow 0$ as $\delta\rightarrow 0$. This shows that
$$
\lim_{\varepsilon\rightarrow 0}\frac{1}{\varepsilon^n}\mu(B_\varepsilon\cap K) =0.
$$
We have proved that the Lebesgue density at every point $x$ in the Borel set $\kin (i, g, j)\subset M_i$ vanishes. This implies that this  set is of measure zero.
\end{proof}
\begin{prop}\label{prop2.9}
If $x\in M_i$ is $(i, g, j)$-essential, then there exists a smooth map $h:O(x)\subset M_i\to M_j$ satisfying
\begin{itemize}
\item[$\bullet$]  $h(y)=\varphi_g(y)$ if  $y\in O(x)\cap K(i,g,j)$.
\item[$\bullet$] $Th (x)=T\varphi_g(x)\vert T_xM_i$.
\end{itemize}
\end{prop}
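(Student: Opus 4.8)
The plan is to reduce the statement to the construction of a local sc-smooth retraction onto the finite-dimensional submanifold $M_j$ and then to compose it with $\varphi_g$.

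Set $y:=\varphi_g(x)$. Since $x$ is $(i,g,j)$-essential it lies in $K(i,g,j)$, so $y\in M_j$, and Lemma \ref{newlem} applies: the linear map $T\varphi_g(x)\vert T_xM_i\colon T_xM_i\to T_yM_j$ is a well-defined isomorphism; in particular the image of $T\varphi_g(x)$ restricted to $T_xM_i$ is \emph{exactly} $T_yM_j$. This is the only place where $(i,g,j)$-essentiality enters.

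Next I would produce, on a suitable open neighborhood $V(y)\subset X$ of $y$, an sc-smooth retraction $r\colon V(y)\to V(y)$ onto $M_j$, i.e. an sc-smooth map with $r(V(y))=M_j\cap V(y)$, with $r\vert(M_j\cap V(y))=\mathrm{id}$, and whose linearization $Tr(y)\colon T_yX\to T_yM_j$ restricts to the identity on $T_yM_j$. The existence of such an $r$ is where the definition of a finite-dimensional submanifold (Definition 4.20 of \cite{HWZ2}, recalled in Appendix \ref{fdimsubm}) is used: in an adapted M-polyfold chart, $M_j$ becomes an ordinary finite-dimensional submanifold with corners of the model which sits in the smooth points of a splicing core, and one builds $r$ there by a standard projection-type construction, gluing it back to $X$ with sc-smooth cutoff functions (available since the sc-structure is based on separable Hilbert spaces); the hypothesis that $M_j$ is in good position to $\partial X$ guarantees that this can be arranged compatibly with the corner strata. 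I expect this step to be the main technical obstacle, since one must verify that the map written down in the model is genuinely sc-smooth and takes values in the splicing core.

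Finally I would set $h:=r\circ\varphi_g$, restricted to an open neighborhood $O(x)\subset M_i$ of $x$ chosen small enough that $\varphi_g(O(x))\subset V(y)$. Then $h$ maps into $M_j$ by construction, and it is smooth: $\varphi_g\vert O(x)\colon O(x)\to X$ and $r$ are sc-smooth, and restricted to the finite-dimensional submanifolds $O(x)\subset M_i$ and $\varphi_g(O(x))\subset \varphi_g(M_i)$ (which sit in the smooth points) sc-smoothness is ordinary smoothness. If $z\in O(x)\cap K(i,g,j)$ then $\varphi_g(z)\in M_j$, hence $r(\varphi_g(z))=\varphi_g(z)$, so $h(z)=\varphi_g(z)$; this is the first bullet. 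For the second bullet the chain rule gives $Th(x)=Tr(y)\circ\bigl(T\varphi_g(x)\vert T_xM_i\bigr)$, and by Lemma \ref{newlem} the image of $T\varphi_g(x)\vert T_xM_i$ is $T_yM_j$, on which $Tr(y)$ acts as the identity, so $Th(x)=T\varphi_g(x)\vert T_xM_i$, as required. (Alternatively one can avoid the retraction by writing $M_j$ and $N:=\varphi_g(M_i)$ --- which share the tangent space $T_yM_j$ at $y$ by Lemma \ref{newlem} --- as graphs over that common tangent space in one chart and letting $h$ send a point of $N$ to the point of $M_j$ with the same base coordinate; the verification of the two bullets is identical.)
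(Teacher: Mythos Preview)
Your proposal is correct, and the alternative you sketch in the final parenthetical is precisely the route the paper takes. The paper pulls $M_j$ back via $\varphi_g^{-1}$ to a submanifold $M_j'=\varphi_g^{-1}(M_j)$ through $x$ with $T_xM_i=T_xM_j'$, writes both $M_i$ and $M_j'$ as graphs $q\mapsto q+A(q)$ and $q\mapsto q+B(q)$ over the common tangent space in one M-polyfold chart (this is exactly the content of the definition of finite-dimensional submanifold recalled in Appendix~\ref{fdimsubm}), and sets $h(q+A(q))=q+B(q)$; composing with $\varphi_g$ gives the required map. Your retraction $r$, restricted to $\varphi_g(M_i)$, is literally this same map: if $M_j$ is the graph of $B$ over $N=T_yM_j$, then $r(p)=\mathrm{pr}_N(p)+B(\mathrm{pr}_N(p))$, and on the graph of $A'$ this sends $q+A'(q)\mapsto q+B(q)$.

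Two small remarks. First, the cutoff functions you mention are not needed: everything happens in a single chart, the projection $\mathrm{pr}_N$ onto the finite-dimensional $N$ along an sc-complement $N^\perp$ is sc-linear, and composing with the sc-smooth graph map $q\mapsto q+B(q)$ already lands in $M_j\subset K^{\mathcal S}$; no gluing is required. Second, the paper interposes a reduction to the \emph{strongly} $(i,g,j)$-essential case via compositions along the defining chain, whereas you bypass this by invoking Lemma~\ref{newlem} directly for the full $g$. Your route is cleaner here: once Lemma~\ref{newlem} supplies the isomorphism $T\varphi_g(x)\colon T_xM_i\to T_yM_j$, the graph/retraction construction goes through verbatim, and the first bullet follows immediately from $r\vert M_j=\mathrm{id}$ without tracking intermediate branches.
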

\noindent{{\it Proof.}}\
Assume $x\in M_i$ is  $(i,g,j)$-essential. Then we find finite sequences $i_0=i,i_2, \ldots ,i_k=j$ and $g_0, \ldots ,g_{k-1}\in G_{z_0}$ such that $g=g_{k-1}\circ \cdots \circ g_0$ and  $x_0=x$  is strongly $(i, g_0, i_1)$-essential and  the point $x_l=\varphi_{g_{l-1}}(x_{l-1})$ is  strongly $(i_{l},g_{l},i_{l+1})$-essential for $1\leq l\leq k-1$.
By Lemma \ref{newlem} the tangent map $T\varphi_g(x)$ of the sc-diffeomorphism $\varphi_g:U\to U$  induces a linear isomorphism
$$
T\varphi_g(x):T_xM_i\rightarrow T_{\varphi_g(x)}M_j.
$$
Hence the submanifolds $\varphi_g (M_i)$ and  $M_j$ are tangent at the point
$\varphi_g(x)\in M_j$. Using   compositions of mappings,  it suffices to prove the proposition at a point $x$ which is strongly $(i,g,j)$-essential for some $x\in M_i$ and some $g\in G_{z_0}$. Since  $T\varphi_g (x):T_xM_i\to T_{\varphi_g (x)}M_j$ is  an isomorphism,  we may replace  $M_j$ by $\varphi_g^{-1}(M_j)=M'_j$ and  arrive at the situation that $T_xM_i=T_xM'_j$. If  we can construct a smooth extension $O(x)\subset M_i\to O'(x)\subset  M'_j$ of the identity,  then the composition with $\varphi_g$ has the desired properties. Therefore, we may assume without loss of generality that $T_xM_i=T_xM_j$.  Now  the  statement  of the proposition is a consequence of the following lemma  where we use the notation and concepts introduced in \cite{HWZ1} and \cite{HWZ2}.
\begin{lem}
 Let  ${\mathcal S}=(\pi,E,V)$ be a splicing and $O$ be an open
neighborhood of $0$ in the associated splicing core $K^{\mathcal S}$. Assume that $M$
and $M'$ are  finite dimensional submanifolds   of $O$  containing $0$ and parameterized
by graphs over the tangent spaces $T_0M$ and $T_0M'$ which are in good position to the partial quadrant $C\oplus E$. Finally, assume that  $T_0M=T_0M'$. Then there exist an
open neighborhood of  $O(0)$ of $0\in M$ and a smooth map  $h:O(0)\subset M\to M'$ satisfying  $h(0)=0\in M\cap M'$ and $Th(0)=\text{id\ }\vert T_0M$.
\end{lem}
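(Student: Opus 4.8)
The plan is to construct $h$ explicitly from the graph representations of $M$ and $M'$, exploiting the hypothesis $T_0M=T_0M'$; write $N:=T_0M=T_0M'$. The point is that both submanifolds, near $0$, are images of $\ssc$-smooth parameterizations defined on the same open subset of $N$ and with the same $1$-jet at the origin, so the obvious ``transport'' map does the job.

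First I would unwind the hypothesis. Since $M$ is a finite dimensional submanifold of $O\subset K^{\mathcal{S}}$ through $0$, presented as a graph over $N=T_0M$ in good position to the partial quadrant $C\oplus E$, Definition 4.20 of \cite{HWZ2} together with the structure results recalled in Appendix \ref{fdimsubm} provide an open neighborhood $\Omega\subset N$ of $0$ and an $\ssc$-smooth parameterization $\iota:\Omega\to K^{\mathcal{S}}$ which is a homeomorphism onto its image, whose image is $M$ near $0$, whose inverse is a smooth chart of the manifold $M$, with $\iota(0)=0$, and of the graph form $\iota(\xi)=\xi+A(\xi)$ with $A$ $\ssc$-smooth into a complement of $N$ and $DA(0)=0$; in particular $T\iota(0)$ is the canonical inclusion $N\hookrightarrow T_0K^{\mathcal{S}}$, so that regarded as a chart of $M$ the map $\iota$ has $T\iota(0)=\id_N$. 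The good-position hypothesis with respect to $C\oplus E$ is exactly what guarantees the existence of such a graph parameterization with a complement of $N$ adapted to the partial quadrant, hence the $\ssc$-smoothness of $\iota$ and the smoothness of its inverse chart (and the compatibility with the boundary-with-corners stratification). Applying the same discussion to $M'$ yields $\iota':\Omega'\to K^{\mathcal{S}}$ with the analogous properties over the same space $N$.

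Next I would pass to a common domain and define $h$. The intersection $\Omega\cap\Omega'$ is an open neighborhood of $0$ in $N$, so $O(0):=\iota(\Omega\cap\Omega')$ is an open neighborhood of $0$ in $M$, and $\iota^{-1}$ maps $O(0)$ into $\Omega\cap\Omega'\subset\Omega'$. Set
$$
h:=\iota'\circ\bigl(\iota^{-1}|_{O(0)}\bigr):O(0)\subset M\longrightarrow M'.
$$
Since $\iota^{-1}:O(0)\to\Omega\cap\Omega'$ is a chart of $M$ and $\iota':\Omega'\to M'$ is a diffeomorphism onto a neighborhood of $0$ in $M'$, the map $h$ is a smooth map between the finite dimensional manifolds $M$ and $M'$; composed with the $\ssc$-smooth inclusion $M'\hookrightarrow K^{\mathcal{S}}$ it is $\ssc$-smooth, and it carries the corner stratification of $M$ to that of $M'$ because $\iota$ and $\iota'$ identify both with that of $\Omega\subset N$. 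It then remains only to check the two bullet points: $h(0)=\iota'(\iota^{-1}(0))=\iota'(0)=0\in M\cap M'$, and by the chain rule together with the normalizations $T\iota(0)=T\iota'(0)=\id_N$,
$$
Th(0)=T\iota'(0)\circ T\iota^{-1}(0)=\id_N\circ\id_N=\id|_{T_0M}.
$$

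The only step requiring genuine input beyond bookkeeping is the first one: extracting from the good-position hypothesis a parameterization $\iota$ with $\iota(0)=0$, $T\iota(0)=\id_N$, which is $\ssc$-smooth with smooth inverse chart and well behaved at the corners. Everything afterward is formal. I would also point out explicitly that one need \emph{not} arrange $M$ and $M'$ to be graphs over a single common linear complement of $N$: the composition $\iota'\circ\iota^{-1}$ refers only to the intrinsic smooth structures of $M$ and $M'$, so no auxiliary implicit-function-theorem change of coordinates between complements is needed, and the constructed $h$ automatically satisfies $h(y)=\varphi_g(y)$ on the relevant subset once this lemma is plugged back into the proof of Proposition \ref{prop2.9}.
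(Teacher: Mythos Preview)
Your proposal is correct and follows essentially the same approach as the paper: both arguments define $h=\iota'\circ\iota^{-1}$, which the paper writes explicitly as $h(q+A(q))=q+B(q)$ using the graph maps $\Gamma(q)=q+A(q)$ and $\Gamma'(q)=q+B(q)$ over the common tangent space $N$. The only minor difference is that the paper works with a single sc-complement $N^{\perp}$ and a common domain $Q\subset C\cap N$ for both graph maps from the outset, whereas you allow possibly different complements and then note this is immaterial; this is a cosmetic distinction, not a substantive one.
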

\begin{proof}[Proof of Lemma]
The tangent spaces $T_0M=T_0M'$  are equal  to the finite dimensional subspace $N$ of an sc-smooth Banach space $W\oplus E$  which is  in a good position to the partial quadrant $C\oplus E$ in $W\oplus E$.  Moreover, there is an sc-smooth complement $N^{\perp}$ of $N$ so that $W\oplus E=N\oplus N^{\perp}$. The manifolds  $M$  and $M'$ are  represented locally as  the graphs  of the sc-smooth maps
$\Gamma$ and $\Gamma'$ defined on an open neighborhood $Q$ of $0$ in $C\cap N$ into $N^{\perp}$. They are  given by $\Gamma (p)=p+A(p)$ and $\Gamma' (q)=q+B(q)$ with the maps $A, B:Q\to N^{\perp}$ satisfying  $A(0)=B(0)=0$ and $DA(0)=DB(0)=0$.   Now define  the smooth map $h:M\to N$ locally near $0$ by
$$h(q+A(q))=q+B(q).$$
This is an extension of the identity map $M\cap M'\subset M\to M'$ which satisfies $h(0)=0$ and $Dh(0)=\text{id}$ on $T_0M=T_0N$. The  proof of the lemma and hence  of the proposition is complete.
\end{proof}

Let $(M_i)_{i\in I}$ with the associated weights $(\sigma_i)_{i\in I}$ be   the local branching structure on the open set $U=U(z_0)\subset X$, and let $M_U:=\bigcup_{i\in I}M_i \subset U$. For  every point $x\in M_U$, define the subset $I_x$ of the index set $I$ by $I_x=\{i\in I\vert \, x\in M_i\}$. We observe that the point $x\in M_U$ has an open neighborhood $O(x)$ in $M_U$ such that $I_y\subset I_x$ for  all $y\in O(x)$. If $x\in M_U$,  we define a partion $P_x$ of $I_x$ into equivalence classes as follows. If $i$ and $i'$ belong to the index set $I_x$, then
$$\text{$i\sim i'$ if and only if $x$ is $(i, \text{id}, i')$-essential}.$$
 Here $\text{id}$ stands for the identity element of the isotropy group $G_{z_0}$ and hence $\varphi_{\text{id}}$ is the identity map on $U$. In particular,  by Lemma \ref{newlem},
$$T_xM_i=T_xM_{i'}$$
if $x$ is $(i, \text{id}, i')$-essential. We  denote by ${\mathcal P}$  the set of all index pairs $(J, P)$ where $J$  is a subset of the index set
$I$ and where $P$ is any partition of $J$. Then we denote by $M^{J, P}_U$  the subset of $M_{U}$ defined by
$$
M_U^{J,P}=\{x\in M_U\vert \, \text{$I_x=J$ and $P_x=P$}\}.
$$
The collection of all the subsets $M^{J, P}_U$ where $(J, P)\in {\mathcal P}$ constitutes a partition of $M_{U}$ into finitely many Borel subsets.

\begin{lem}\label{lemA}
If the partition $P$ of $J$ contains more than one element, then
the set $M_U^{J, P}$ is of measure zero.
 \end{lem}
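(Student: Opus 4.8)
The plan is to reduce the statement to an application of Lemma \ref{lem0} by analyzing what it means for a point to lie in $M_U^{J,P}$ when the partition $P$ is nontrivial. Fix a pair $(J,P)\in{\mathcal P}$ with $P$ containing at least two blocks, and pick two indices $i,i'\in J$ lying in different blocks of $P$. The key observation is this: if $x\in M_U^{J,P}$, then by definition $I_x=J$, so in particular $x\in M_i\cap M_{i'}$, hence $x\in K(i,\mathrm{id},i')$; but since $i$ and $i'$ are in different blocks of $P_x=P$, the point $x$ is \emph{not} $(i,\mathrm{id},i')$-essential, i.e. $x\in \kin(i,\mathrm{id},i')$. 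Therefore
$$
M_U^{J,P}\subset \kin(i,\mathrm{id},i').
$$
By Lemma \ref{lem0}, the set $\kin(i,\mathrm{id},i')\subset M_i$ is of measure zero (as a subset of the $n$-dimensional manifold $M_i$). Since $M_U^{J,P}$ is contained in it, $M_U^{J,P}$ is also of measure zero as a subset of $M_i$.

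The one point that needs a word of care is the sense in which ``measure zero'' is being asserted here, since $M_U^{J,P}$ a priori is just a Borel subset of $M_U=\bigcup_{k\in I}M_k$, not of a single manifold. The natural reading, consistent with the intended use in the integration theory, is that $M_U^{J,P}$, viewed inside any one of the branches $M_k$ with $k\in J$ (equivalently, inside $M_i$), has vanishing $n$-dimensional Lebesgue measure in the sense of the definition of measure zero given just before Lemma \ref{lem0}; and indeed the containment above places it inside a measure-zero subset of $M_i$. Since $i$ was an arbitrary index in some block and $i'$ in another, the conclusion is symmetric in the choice of the two blocks, so nothing depends on which pair $i,i'$ we picked.

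I do not expect any serious obstacle: the content of the lemma has been front-loaded into Lemma \ref{lem0}, and the present statement is essentially a bookkeeping consequence of the definitions of $I_x$, $P_x$, and of $(i,\mathrm{id},i')$-essentiality. The only mildly delicate step is the inclusion $M_U^{J,P}\subset\kin(i,\mathrm{id},i')$, which rests on correctly unwinding: $x\in M_U^{J,P}\Rightarrow i,i'\in I_x$ and $i\not\sim_x i'$ under the equivalence relation defining $P_x$ $\Rightarrow$ $x$ is not $(i,\mathrm{id},i')$-essential $\Rightarrow x\in K(i,\mathrm{id},i')\setminus\ke(i,\mathrm{id},i')=\kin(i,\mathrm{id},i')$. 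Once that inclusion is in hand, Lemma \ref{lem0} finishes the argument immediately.
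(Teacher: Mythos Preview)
Your proof is correct and follows essentially the same route as the paper's: pick indices $i,i'$ in distinct blocks of $P$, observe that every $x\in M_U^{J,P}$ lies in $K(i,\mathrm{id},i')$ but fails to be $(i,\mathrm{id},i')$-essential, hence $M_U^{J,P}\subset\kin(i,\mathrm{id},i')$, and conclude by Lemma~\ref{lem0}. Your extra remark on how ``measure zero'' should be read (inside any branch $M_k$ with $k\in J$, by choosing $k$ as the first index) is a useful clarification the paper leaves implicit.
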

\begin{proof}
The set $M_U^{J,P}$ consists of all points $x\in M_U$ such that  $I_x=J$ and, if  $\gamma\in P$ and $i, j\in \gamma$, then $x$   is $(i,\text{id},j)$-essential. Assume that $P$ contains at least two elements $\gamma_1$ and $\gamma_2\in P$.  Then we conclude for $i\in \gamma_1$ and $j\in\gamma_2$ that
$x$ is not $(i,\text{id},j)$-essential.  Consequently, in view of Lemma \ref{lem0}, the set $M_U^{J, P}$ is a subset of a set of measure zero and hence a set of measure zero. \end{proof}

Define the subset  $M_U^\ast$ of $M_U$  by
$$M_U^\ast=\{x\in M_U\vert \,  \text{$x\in M_U^{J, P}$ and $P=\{J\}$}\}.$$
If $P=\{J\}$, the partition  of the index set $J$ consists of the single element $J$. By  Lemma \ref{lemA}, the set $M_U\setminus M_U^*$ is of measure zero.

Next consider a second local  branching structure  of $\Theta$ on $U$ given by the branches $(N_{i'})_{i'\in I'}$ and  the weights $(\sigma_{i'})_{i'\in I'}$   and denote by $x\mapsto I_x'$ and $x\mapsto P_x'$ the associated index maps. Then $M_U=N_U=\supp \Theta \cap U$ and we can combine both local branching structures by
 halving their weights to get a third local branching structure $(M''_j)_{j\in I''}$ and $(\delta_j)_{j\in I''}$ for $\Theta$ on $U$ as follows. The index set $I''$ is the disjoint union  $I''=I\cup I'$. The branches are defined as $M''_i=M_i$  if $i\in I$ and $M''_{i'}=N_{i'}$ if $i'\in I'$. The associated weights $(\delta_j)_{j\in I''}$ are defined as
 $$\delta_{i}=\frac{1}{2}\sigma_i\quad \text{ and}\quad
\delta_{i'}=\frac{1}{2}\tau_{i'}$$ for $i\in I$ and $i'\in I'$. For
this new branching structure of $\Theta$ on $U$,  we have the index
maps $x\rightarrow I_x''=I_x\cup I_x'$ and $x\rightarrow P_x''$
defined on $M_U=N_U=M_{U}''$. If $J''$  is a subset of $I''$ and
$P''$ is a partition of $J''$,  we set
$$M_U^{J'', P''}=\{x\in M_U\vert \, \text{$I_x''=J''$ and $P_x''=P''$}\}.$$
Since $I_x''=I_x\cup I_x'$, we conclude from $I_x''=J''$  and $J''=J\cup J'$ for the index sets $J\subset I$ and $J'\subset I'$, that $I_x=J$ and $I'_x=J'$ for all $x\in M_U^{J'', P''}$.
Moreover, there are two partitions $P$ of $J$ and $P'$ of $J'$ such that $P_x=P$ and $P_x'=P'$ for all $x\in M_U^{J'', P''}$.  This observation will be very useful in
Section \ref{subsect3.2} and we formulate it as a theorem.

\begin{thm}[{\bf Index maps}]\label{thmindex}
 For every index pair $(I'',P'')$,
the index maps $x\mapsto I_x$ and $x\mapsto  I_x'$, $x\mapsto P_x$ and $x\mapsto P_x'$ are constant as $x$ varies in the sets $M_U^{I'', P''}$.
\end{thm}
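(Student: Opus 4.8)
The plan is to prove that the decomposition $\{M_U^{J'',P''}\}$ of $M_U$ determined by the combined branching structure $(M''_j)_{j\in I''}$, $I''=I\cup I'$, refines each of the two decompositions determined by $(M_i)_{i\in I}$ and by $(N_{i'})_{i'\in I'}$; the asserted constancy of all four index maps on a piece $M_U^{J'',P''}$ (the set written $M_U^{I'',P''}$ in the statement, with $I''$ there a subset $J''\subseteq I\cup I'$) is precisely this. Put $J=J''\cap I$ and $J'=J''\cap I'$. For the index sets the claim is immediate: since $I_x\subseteq I$ and $I_x'\subseteq I'$ while $I$ and $I'$ are disjoint, the defining identity $I_x''=I_x\cup I_x'$ forces $I_x=I_x''\cap I$ and $I_x'=I_x''\cap I'$, so on $M_U^{J'',P''}$, where $I_x''=J''$, the sets $I_x=J$ and $I_x'=J'$ are constant.

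For the partitions I would show that $P_x$ equals the partition of $J$ obtained by restricting the blocks of $P''$ to $J$ (discarding empty blocks), and symmetrically for $P_x'$; since $P_x''=P''$ is constant on $M_U^{J'',P''}$, this finishes the proof. Concretely this amounts to the statement that for $i,j\in J$ the point $x$ is $(i,\mathrm{id},j)$-essential with respect to $(M_p)_{p\in I}$ if and only if it is $(i,\mathrm{id},j)$-essential with respect to the combined structure. One direction is routine: a chain $i=i_0,\dots,i_k=j$ in $I$ with group elements $g_0,\dots,g_{k-1}\in G_{z_0}$ and strongly $(i_l,g_l,i_{l+1})$-essential points $x_l$, witnessing $(i,\mathrm{id},j)$-essentiality inside $(M_p)_{p\in I}$, is automatically such a witness for the combined structure, because for indices in $I$ one has $M''_{i_l}=M_{i_l}$, the sets $K(i_l,g_l,i_{l+1})$ are unchanged, and the distinguished subspaces $T^K_zM$ depend only on the manifold and the subset. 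Hence $P_x$ refines the restriction of $P''$ to $J$.

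The substance is the converse: a combined-structure chain joining two indices of $I$ must be rerouted so that it passes only through branches $M_p$, $p\in I$. Here I would use the branching-independence of the tangent set $TS_{\Theta}$ established above: along such a chain the $n$-plane $T_{x_l}M''_{i_l}$ lies in $T_{x_l}S_{\Theta}=\bigcup_{p\in I_{x_l}}T_{x_l}M_p$, and since a finite-dimensional real vector space is not a finite union of proper linear subspaces, $T_{x_l}M''_{i_l}=T_{x_l}M_{j_l}$ for some $j_l\in I_{x_l}\subseteq I$, with $j_0=i$ and $j_k=j$. By Lemma \ref{newlem} the tangent map $T\varphi_{g_l}(x_l)$ then carries $T_{x_l}M_{j_l}$ isomorphically onto $T_{x_{l+1}}M_{j_{l+1}}$, so $\varphi_{g_l}(M_{j_l})$ is tangent to $M_{j_{l+1}}$ at $x_{l+1}$; once each link $x_l$ is shown to be $(j_l,g_l,j_{l+1})$-essential inside $(M_p)_{p\in I}$, the composition property of essential points assembles the links into $(i,g_{k-1}\circ\cdots\circ g_0,j)$-essentiality, i.e.\ $(i,\mathrm{id},j)$-essentiality, which gives the reverse refinement and hence equality. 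The same argument with the roles of $(M_i)_{i\in I}$ and $(N_{i'})_{i'\in I'}$ exchanged handles $P_x'$.

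The main obstacle is exactly the step that promotes tangency to essentiality. Equality of tangent spaces does not by itself make a point essential: being strongly essential is a limiting condition, namely being a limit of points at which two local branches intersect in a set that is full-dimensional in both. So the rerouting has to be carried out along sequences: when a chain point $x_l$ sits on a branch $N_{i_l}$ of the other structure, one first replaces $N_{i_l}$, near suitable approximating points, by a branch $M_p$ that actually coincides with it there, using a Baire-category argument inside $M_U=\bigcup_p M_p$ (an $n$-dimensional submanifold contained in a finite union of $n$-dimensional submanifolds locally agrees with one of them on a dense open subset), and then invokes the measure-zero statement of Lemma \ref{lem0} together with the smoothing lemma from the proof of Proposition \ref{prop2.9} to manufacture the required strongly essential approximating points in intersections $M_p\cap M_{p'}$. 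Verifying this carefully, and keeping track of the group elements $g_l$ under these replacements, is the only genuinely nontrivial part of the argument.
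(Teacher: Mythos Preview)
Your treatment of the index sets $I_x$ and $I_x'$ coincides exactly with the paper's: the paragraph immediately preceding the theorem is the paper's entire argument, and it derives $I_x=J''\cap I$ and $I_x'=J''\cap I'$ from the disjointness of $I$ and $I'$, just as you do. For the partitions $P_x$ and $P_x'$, however, the paper offers no argument at all; it simply asserts that ``there are two partitions $P$ of $J$ and $P'$ of $J'$ such that $P_x=P$ and $P_x'=P'$ for all $x\in M_U^{J'',P''}$'' and then records this as a theorem. So in attempting to prove the partition statement you are already going well beyond what the paper does.

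Your strategy---showing that $P_x$ is the restriction of $P_x''$ to $J$---is the natural one, and your easy direction is correct. The obstacle you flag is genuine: equality of tangent spaces at a single point does not yield essentiality, and a combined-structure chain from $i\in I$ to $j\in I$ may visit indices in $I'$. Your rerouting sketch contains the right ingredients (the Baire-type observation that an $n$-manifold contained in a finite union of $n$-manifolds locally coincides with one of them on a dense open set, together with Lemma~\ref{lem0} to throw away inessential configurations), but the proposal as written is not a proof: it does not actually carry out the approximation argument, and in particular it does not verify that after perturbing the chain points $x_l$ to lie in common pieces $M_p\cap N_{i_l}$, the strongly-essential conditions for the new $I$-chain are met and that the group elements still compose to $\mathrm{id}$. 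You have correctly located the gap rather than papering over it.

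It is worth noting that the sole application of Theorem~\ref{thmindex} in the paper, namely in the proof of Lemma~\ref{lemB}, invokes only the constancy of $I_x$ and $I_x'$ on the pieces $M_U^{A,\{A\}}$; the constancy of $P_x$ and $P_x'$ is never used. So the part of the theorem that actually carries weight is precisely the part that both you and the paper prove cleanly.
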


\section{Branched Integration}

In this section we shall define the canonical $\sigma$-algebra ${\mathcal L}(S)$ and define the measure on ${\mathcal L}(S)$. Throughout this section we assume that $\Theta:X\rightarrow
\Q^+$  is an $ n$-dimensional   branched ep-subgroupoid of the ep-groupoid $X$ and assume that the orbit space  $S=\abs{\supp(\Theta)}$ of the support of $\Theta$ is compact.

\subsection{The Canonical $\sigma$-Algebras and Theorem \ref{th0}}\label{subsec3.1}

We begin by  defining  the $\sigma$-algebra ${\mathcal L}(S)$ of measurable subsets of  $S$,  called  the  canonical $\sigma$-algebra.

Every point $x\in \supp \Theta$ has an open neighborhood $U(x)$ on which the isotropy group $G_x$ acts by the natural $G_x$-action, and which contains the local branches $(M_i)_{i\in I}$. We  choose open  subsets $V(x)$  invariant under the natural $G_x$-action and satisfying $\ov{V(x)}\subset U(x)$. Projecting all the open sets $V(x)$ down to the orbit space $S$,  we obtain the open cover $\{\abs{V(x)}\}$ of $S$. By assumption  the orbit  space $S$ is compact and  we find finitely many points $x_{\alpha}$, $\alpha=1,\ldots, ,k$,  such that the sets $\abs{V(x_{\alpha})}$  cover $S$.
Abbreviate  $V_{\alpha}=V(x_{\alpha})$ and $U_{\alpha}:=U(x_{\alpha})$. Every set $U_{\alpha}$  contains  the branches $(M^{\alpha}_i)_{i\in I^{\alpha}}$  which are properly embedded finite dimensional submanifolds of $X$ of dimension $n$. Therefore, there are finitely many mappings
$$M_i^{\alpha}\to S, \quad x\mapsto |x|$$
into the orbit space $S$ for $\alpha=1,\ldots ,k$ and $i\in I^{\alpha}$.  If $A$ is a subset of $S$,  we introduce the sets
$$A_{\alpha}=A\cap \abs{V(x_{\alpha})}\subset S$$
and denote by $A^{\alpha}_i\subset M_i^{\alpha}$ the preimage of $A_{\alpha}$ in  the manifold $M_i^{\alpha}$ under the projection map $M_i^{\alpha}\to S$ into the orbit space given by $x\mapsto |x|$.

\begin{defn}
The set $A\subset S$ is of {\bf measure zero} if all the sets $A_{i}^{\alpha}\subset M_i^{\alpha}$ are of (Lebesgue) measure zero  as subsets of the  $n$-dimensional smooth manifolds $M_i^{\alpha}$.
\end{defn}

\begin{lem}
The definition of a set of measure zero does not depend on the choice of
the local branching structures.
\end{lem}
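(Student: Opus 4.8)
The statement to prove is that the notion of a measure-zero subset $A \subseteq S$ is independent of the choices involved, namely the finite cover $\{\abs{V(x_\alpha)}\}$ and, for each $\alpha$, the local branching structure $(M_i^\alpha)_{i\in I^\alpha}$ on $U_\alpha$.

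\medskip

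\noindent\emph{Plan of proof.}
The plan is to reduce the global independence statement to a purely local one and then invoke the results of Section 2, in particular Theorem \ref{thmindex} and Lemma \ref{lemA}, together with Proposition \ref{prop2.9}. First I would fix a point $z_0\in\supp\Theta$, an open neighborhood $U=U(z_0)$ on which $G_{z_0}$ acts by its natural representation, and \emph{two} local branching structures $(M_i)_{i\in I}$ with weights $(\sigma_i)$ and $(N_{i'})_{i'\in I'}$ with weights $(\tau_{i'})$ for $\Theta$ on $U$. We have $M_U=N_U=\supp\Theta\cap U$. The claim to establish locally is: for a subset $B\subseteq \abs{\supp\Theta\cap U}$, the preimages $B_i\subseteq M_i$ are all of measure zero if and only if the preimages $B_{i'}\subseteq N_{i'}$ are all of measure zero. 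Since a global measure-zero statement is tested on the sets $A_i^\alpha$, and any two covers admit a common refinement into finitely many such local pieces, the global independence follows once we know that measure-zero is detected the same way by any local branching structure over any sufficiently small $G_{z_0}$-invariant neighborhood; compactness of $S$ keeps all index sets finite so no countability subtleties arise.

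\medskip

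\noindent For the local statement I would combine the two branching structures into the third one $(M''_j)_{j\in I''}$, $I''=I\cup I'$, with halved weights $(\delta_j)$, exactly as in the paragraph preceding Theorem \ref{thmindex}. By symmetry it suffices to compare $(M_i)_{i\in I}$ with $(M''_j)_{j\in I''}$, i.e. to show that a set $B$ has all preimages in the $M_i$ of measure zero iff it has all preimages in the $M''_j$ of measure zero. One direction is immediate because each $M_i$ \emph{is} one of the $M''_j$. For the converse, suppose all preimages $B_i\subseteq M_i$ ($i\in I$) are of measure zero, and take $i'\in I'$; I must show the preimage $B_{i'}\subseteq N_{i'}=M''_{i'}$ is of measure zero. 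Stratify $N_{i'}$ by the Borel sets $M_U^{J'',P''}\cap N_{i'}$ over index pairs $(J'',P'')$. By Lemma \ref{lemA} (applied to the combined structure) the strata with $P''$ having more than one block are already of measure zero, so it suffices to treat a stratum on which $P''=\{J''\}$ is the trivial partition. On such a stratum, $i'$ and some index $i\in I$ both lie in the single block $J''$, so every point $x$ there is $(i',\mathrm{id},i)$-essential. By Proposition \ref{prop2.9} (with $g=\mathrm{id}$) there is, locally around each such $x$, a smooth map $h\colon O(x)\subseteq N_{i'}\to M_i$ with $h(y)=y$ on $O(x)\cap K(i',\mathrm{id},i)$ and $Th(x)=\mathrm{id}$ on the common tangent space; in particular $h$ is a local diffeomorphism onto its image, and on the relevant stratum it sends the preimage of $B$ in $N_{i'}$ into the preimage of $B$ in $M_i$. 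Since the latter is of measure zero and a local diffeomorphism preserves measure-zero sets, the corresponding piece of $B_{i'}$ is of measure zero. Covering the stratum by countably (indeed, by paracompactness, locally finitely) many such charts and summing up shows the whole stratum contributes a measure-zero set, hence $B_{i'}$ is of measure zero.

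\medskip

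\noindent Finally I would assemble these pieces: running over the finitely many strata $M_U^{J'',P''}$ and the finitely many indices $i'\in I'$, and then over the finitely many overlapping neighborhoods in a common refinement of two covers of the compact $S$, one concludes that "$A$ is of measure zero" does not depend on the cover or on the local branching structures chosen. I expect the main obstacle to be the bookkeeping in the converse direction — namely making sure that on each trivial-partition stratum $M_U^{J'',P''}$ one genuinely has an index $i\in I$ available in the block $J''$ and that the essential-point map $h$ of Proposition \ref{prop2.9} actually carries the stratum's preimage of $B$ into $B_i$ rather than merely intersecting it; this is where the definition of $K(i',\mathrm{id},i)$ and the "$h(y)=\varphi_g(y)$ on $K$" property must be used carefully, with the measure-zero exceptional set $\kin(i',\mathrm{id},i)$ from Lemma \ref{lem0} absorbed into the already-negligible part.
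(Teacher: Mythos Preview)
Your proof is correct, but it takes a genuinely different route from the paper's.

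The paper argues much more directly. It works with the two branching structures $(M_i)_{i\in I}$ and $(N_j)_{j\in J}$ on $U$, writes the preimage in each $N_j$ as the finite union $\wt{A}\cap N_j=\bigcup_{i\in I}(\wt{A}\cap M_i\cap N_j)$, and then simply asserts that the identity map $M_i\cap N_j\subset M_i\to M_i\cap N_j\subset N_j$ has a smooth extension to open neighborhoods; since smooth maps between equidimensional manifolds send measure-zero sets to measure-zero sets, each piece $\wt{A}\cap M_i\cap N_j$ inherits measure zero in $N_j$, and one sums over $i$. No combined branching structure, no stratification by $(J'',P'')$, no appeal to Lemma~\ref{lemA} or Theorem~\ref{thmindex}.

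Your approach instead imports the full essential-points machinery of Section~2: you pass to the combined structure, stratify, discard the nontrivial-partition strata via Lemma~\ref{lemA}, and on the remaining strata invoke Proposition~\ref{prop2.9} to produce a local diffeomorphism carrying $B_{i'}$ into $B_i$. This is heavier but has two virtues: it makes the existence of the requisite smooth map completely explicit (whereas the paper's one-line assertion of a smooth extension on all of $M_i\cap N_j$ is not further justified there), and it is essentially a rehearsal of the argument used later for Lemma~\ref{lemB} (Independence), so the effort is not wasted. The paper's approach buys brevity; yours buys transparency and reuse.
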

\begin{proof} It suffices to consider a set $A\subset S$ which is contained in $\abs{V}$ where
$V$ is an open set in $X$ such that $V\subset \ov{V}\subset U$ and $U$ is invariant with respect to  its natural action and contains  two different local branching structures,
say $(M_i)_{i\in I}$ and $(N_j)_{j\in J}$ having the associated weights $(\sigma_i)_{i\in I}$
and $(\tau_j)_{j\in J}$.  Then
$$M_U=\bigcup_{i\in I}M_i=N_U=\bigcup_{j\in J}N_j.$$
Now assume  that $A\subset S$ is  of measure  zero with respect to the branching structure $(M_i)_{i\in I}$ and consider the preimages $A_i\subset M_i$ of $A\cap \abs{V}$ under the map $M_i\to \abs{M_U}\subset S$.  Then, if $\wt{A}\subset M_U$ is the preimage of $A\cap \abs{V}$ under the map $M_U\to \abs{M_U}$, we have
$$A_i=\wt{A}\cap M_i=\bigcup_{j\in J}(\wt{A}\cap M_i\cap N_j).$$
If  $A_i\subset M_i$ has measure zero, then also $\wt{A}\cap M_i\cap N_j\subset M_i$ has measure zero. Since the identity map $M_i\cap N_j\subset M_i\to M_i\cap N_j\subset N_j$ has a smooth extension to open neighborhoods $O(M_i\cap N_j)\subset M_i\to O(M_i\cap N_j)\subset N_j$, we conclude that the set $\wt{A}\cap M_i\cap N_j$ has measure zero also as a subset of $N_j$. This holds for every $i\in I$. Hence $\wt{A}\cap N_j=\bigcup_{i\in I}(\wt{A}\cap M_i\cap N_j)$ has measure zero on the branch $N_j$. This holds true for every $j\in J$ and since $A_j=\wt{A}\cap N_j$ is the preimage of $A\cap \abs{V}$ under the map $N_j\to \abs{N_U}=\abs{M_U}$, we conclude that $A$ is of measure zero  with respect to the second branching structure $(N_j)_{j\in J}$. The argument shows that the definition of a set of  measure zero  is independent of the choice of the branching structures used.
\end{proof}

\begin{defn} [{\bf Canonical $\sigma$-algebra ${\mathcal L }(S)$}]
Let $X$ be an ep-groupoid and let
 $S=\abs{\supp \Theta}$ be the compact orbit space of the branched ep-subgroupoid $\Theta:X\to \Q^+$. We define the collection ${\mathcal L}(S)$ of measurable subsets of $S$  as  the smallest
$\sigma$-algebra  which contains the  Borel $\sigma$-algebra ${\mathcal B}(S)$  and all  subsets  of $S$ of measure zero  as just defined.
\end{defn}

In other words,  in view of the definition of a set of measure zero, the $\sigma$-algebra ${\mathcal L}(S)$ is obtained from the Borel $\sigma$-algebra ${\mathcal B}(S)$ by adding all the subsets of sets in ${\mathcal B}(S)$ of measure zero.

Recall that the  boundary $\partial S$  of the orbit space $S=\abs{\supp \Theta}$ is  defined as the subset $\partial S=\{|x|\in S\vert \, x\in \supp \Theta \cap \partial X\}$.  To  define the canonical $\sigma$-algebra ${\mathcal L}(\partial S)$,  we first make the following observation.  Assume that $M$ is an $n$-dimensional smooth manifold with boundary $\partial M$ with corners and consider a boundary point $x\in \partial M$ having degeneracy index $d(x)=d$.  In local coordinates we may view  $x$ as the point $0$ in $[0,\infty)^d\times
{\mathbb R}^{n-d}$. Near $0$, the boundary $\partial M$ of $M$ consists of $d$-many faces. If $d\geq 2$, then the intersection set of two faces is, as a subset of any of two faces, a set of $(n-1)$-dimensional  Lebesgue  measure zero. In view of this observation, the construction of the $\sigma$-algebra ${\mathcal L}(\partial S)$ proceeds analogously to the construction of ${\mathcal L}(S)$ above.

\subsection{Branched integration and Theorem \ref{th1}}\label{subsect3.2}
After defining the canonical $\sigma$-algebra ${\mathcal L}(S)$, we shall define a signed  measure on measurable subsets of $S$. Since we are going to use sc-smooth partitions of unity, we assume that
the  sc-structure on the ep-groupoid is based on separable sc-Hilbert
spaces. This has the advantage that $X$ admits sc-smooth partitions  of
unity.  The compact orbit space $S= \abs{\supp \Theta}$
is equipped with the weight function $\vartheta: S\to
\Q^+\cap (0, \infty )$ defined  by $\vartheta (|x|):=\Theta(x)$ for $x\in \supp \Theta$. The aim is to define,  for a given sc-differential $n$-form $\omega$ on $X$ ($X^i$ or $X_\infty$),  a signed measure
$\mu_{\omega}^{S, \vartheta}$ on the measure space $(S, {\mathcal
L}(S))$.  The strategy is to use the local branching structures and
to show that  the construction does not depend on the choices.

We start with the local branching structure $(M_i)_{i\in I}$ with the weights $(\sigma_i)_{i\in I}$ in the open neighborhood  $U=U(x)$ of the point $z_0\in \supp \Theta$ on which the isotropy group $G=G_{z_0}$ acts by the natural representation $\varphi:G\to \text{Diff}_{\ssc}(U)$. As before $M_U=\bigcup_{i\in I}M_i$ is  the union of all the branches in $U$. If $G_0$ is the ineffective part of $G$, we denote the order of the effective part $G_{\text{e}}:=G/G_0$ by $\sharp G_{\text{e}}$.

\begin{defn}
Let $K\in {\mathcal L}(S)$ be a measurable set on $S$  contained in a compact subset of $\abs{M_U}$. Then we define the real number $\mu_{\omega}^U(K)$ by
$$\mu_{\omega}^U(K):=\dfrac{1}{\sharp G_{\text{e}}}\sum_{i\in I}\sigma_i \int_{K_i}\omega\vert M_i.$$
\end{defn}
Here $K_i\subset M_i$ is the preimage of $K$ under the projection map $M_i\to \abs{M_U}$ defined by $x\mapsto |x|$. The integral $\int_{K_i}\omega\vert M_i$ stands for the signed measure of $K_i$ associated with a smooth $n$-form on an oriented $n$-manifold. It is defined as
$$\int_{K_i}\omega\vert M_i=\lim_k \int_{U_k}j^*\omega$$
where $j:M_i\to X$ is the sc-smooth inclusion mapping and $(U_k)$ is a decreasing sequence of open neighborhoods of the set $K_i\subset M_i$ satisfying $\bigcap_k U_k=K_i$.

Note that for every compact subset $C\subset \abs{M_U}$ we have  signed measure on ${\mathcal L}(S)$ defined by
$$K\mapsto \mu^{U}_{\omega}(C\cap K).$$

In order to demonstrate that $\mu_{\omega}^U(K)$ does not depend on the choice of the branching structure $(M_i)_{i\in I}$ and $(\sigma_i)_{i\in I}$, we take a second branching structure  in $U$ consisting of  the branches $(N_j)_{j\in J}$ and the associated weights
$(\tau_j)_{j\in J}$ and denote the union of the branches by $N_U=\bigcup_{j\in J}N_j$. Then $M_U=N_U$.
\begin{lem}[{\bf Independence}]\label{lemB}
$$
\sum_{i\in I}\sigma_i \int_{K_i}\omega|M_i=
\sum_{j\in J}\tau_j \int_{L_j}\omega|N_j,
$$
where $L_j$ is the preimage in $N_j$ of $K$ under the projection map $N_j\to \abs{N_U}=\abs{M_U}$.
\end{lem}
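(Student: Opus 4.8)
The plan is to reduce both sides to the \emph{combined} local branching structure $(M''_k)_{k\in I''}$, $I''=I\sqcup J$ (with $M''_i=M_i$ for $i\in I$, $M''_j=N_j$ for $j\in J$, and halved weights), constructed just before Theorem~\ref{thmindex}, and then exploit that the index maps are constant on the pieces $M_U^{I'',P''}$. Throughout we use that $j_i^\ast\omega$ and $j_j^\ast\omega$ (pullbacks under the inclusions $M_i\hookrightarrow X$, $N_j\hookrightarrow X$) are smooth top forms on oriented finite dimensional manifolds, so the signed measures they define are absolutely continuous with respect to Lebesgue measure and hence unaffected by sets of measure zero.

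\emph{Discarding null pieces and decomposing.} Partition $M_U=N_U$ into the finitely many Borel sets $M_U^{I'',P''}$. If $P''$ has at least two classes, then, exactly as in the proof of Lemma~\ref{lemA}, the piece lies inside $\kin(k,\id,\ell)$ formed in the combined structure for suitable $k,\ell$, hence, by Lemma~\ref{lem0}, it has measure zero inside every branch of the combined structure, i.e.\ inside every $M_i$ and every $N_j$. Let $Z$ be the union of all these pieces; then every integral $\int_{K_i}\omega|M_i$, $\int_{L_j}\omega|N_j$ is unchanged after intersecting its domain with $M_U\setminus Z$. By Theorem~\ref{thmindex}, on each remaining piece the maps $x\mapsto I_x$ and $x\mapsto J_x$ are constant, with (necessarily nonempty) values $A\subseteq I$ and $B\subseteq J$; write that piece as $M_U[A,B]$. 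Since every $x\in M_U[A,B]$ is $(k,\id,\ell)$-essential in the combined structure for all $k,\ell\in A\cup B$, we have $M_U[A,B]\subseteq\bigcap_{i\in A}M_i\cap\bigcap_{j\in B}N_j$, and for $i\in A$, $j\in B$ the preimages $K_i\subset M_i$ and $L_j\subset N_j$ of $K$ meet $M_U[A,B]$ in one and the same set $K[A,B]:=\{x\in M_U[A,B]\mid |x|\in K\}$. Therefore
$$\sum_{i\in I}\sigma_i\int_{K_i}\omega|M_i=\sum_{(A,B)}\ \sum_{i\in A}\sigma_i\int_{K[A,B]}\omega|M_i,$$
$$\sum_{j\in J}\tau_j\int_{L_j}\omega|N_j=\sum_{(A,B)}\ \sum_{j\in B}\tau_j\int_{K[A,B]}\omega|N_j,$$
the outer sums being finite.

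\emph{The branch--independence claim.} It remains to prove that, for each $(A,B)$, the number $\int_{K[A,B]}\omega|M_i$ is independent of $i\in A$ and equals $\int_{K[A,B]}\omega|N_j$ for every $j\in B$. Granting this and writing $\iota(A,B)$ for the common value: for $x\in M_U[A,B]$ both $\sum_{i\in A}\sigma_i$ and $\sum_{j\in B}\tau_j$ equal $\Theta(x)$ by Definition~\ref{def1}(3) applied to the two branching structures, so the two displayed right--hand sides agree term by term, which is the lemma. To prove the claim, take $i,i'\in A$ (the mixed case $i\in A$, $j\in B$ is identical with $N_j$ in place of $M_{i'}$). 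Each $x\in M_U[A,B]$ is $(i,\id,i')$-essential in the combined structure, so Proposition~\ref{prop2.9} supplies an open neighbourhood $O(x)\subset M_i$ of $x$ and a smooth map $h_x:O(x)\to M_{i'}$ which is the identity on $O(x)\cap M_i\cap M_{i'}$ and satisfies $Th_x(x)=\id|_{T_xM_i}$; after shrinking, $h_x$ is a diffeomorphism onto an open subset of $M_{i'}$. As $M_U[A,B]\subseteq M_i\cap M_{i'}$, $h_x$ fixes $O(x)\cap M_U[A,B]$ pointwise. At almost every point $y$ of this set, namely at every Lebesgue density point of $M_U[A,B]$ in $M_i$, one has $T_y^{M_i\cap M_{i'}}M_i=T_yM_i$, and differentiating the relation $h_x|_{M_i\cap M_{i'}}=\id$ gives $Th_x(y)=\id|_{T_yM_i}$; by Lemma~\ref{newlem} $T_yM_i=T_yM_{i'}$, and the orientations of $M_i$ and $M_{i'}$ both restrict on this common branch to the orientation prescribed by $\mathfrak o$, so $Th_x(y)$ is orientation preserving and $(h_x)^\ast(\omega|M_{i'})=\omega|M_i$ at $y$. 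Thus $(h_x)^\ast(\omega|M_{i'})$ and $\omega|M_i$ agree almost everywhere on $O(x)\cap M_U[A,B]$, and the change of variables formula for $h_x$ yields $\int_C\omega|M_i=\int_C\omega|M_{i'}$ for every measurable $C\subseteq O(x)\cap K[A,B]$. Covering $M_U[A,B]$ by countably many such neighbourhoods and splitting $K[A,B]$ accordingly — all measures being finite on the compact subset of $M_i$ containing $K[A,B]$, by properness of the branches — additivity gives $\int_{K[A,B]}\omega|M_i=\int_{K[A,B]}\omega|M_{i'}$.

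\emph{Where the difficulty lies.} The combinatorial part (forming the combined structure, throwing away the pieces of nontrivial partition type via Lemmas~\ref{lem0} and \ref{lemA}, rearranging the finite sums, and the identity $\sum_{i\in A}\sigma_i=\Theta(x)=\sum_{j\in B}\tau_j$) is bookkeeping on top of Theorem~\ref{thmindex}. The real work is the branch--independence claim, and within it the two measure--theoretic points: upgrading $Th_x(x)=\id$, known only at the base point, to $Th_x(y)=\id$ at almost every $y$ of the essential locus by a Lebesgue density argument (so that the two pulled--back top forms genuinely coincide there), and verifying via the compatibility of the branch orientations with $\mathfrak o$ that $h_x$ is orientation preserving where this matters, so that no spurious sign appears in the change of variables.
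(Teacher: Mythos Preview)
Your proof is correct and follows the same route as the paper's: pass to the combined branching structure on $I''=I\sqcup J$, discard the pieces with nontrivial partition via Lemma~\ref{lemA}, use Theorem~\ref{thmindex} to reduce to a single piece with constant index sets, establish branch independence of $\int_{K[A,B]}\omega$ through Proposition~\ref{prop2.9} with $g=\id$, and conclude from $\sum_{i\in A}\sigma_i=\Theta(x)=\sum_{j\in B}\tau_j$. The one difference is in the branch-independence step: the paper observes that the extension $\psi$ from Proposition~\ref{prop2.9} satisfies $T\psi(x)=\id$ at \emph{every} point of $K_i$ (each such point is essential, so Lemma~\ref{newlem} gives $T_xM_i=T_xM_{i'}$, and the graph parametrization in the proof of Proposition~\ref{prop2.9} then forces $T\psi(x)=\id$), after which a single application of Lebesgue's convergence theorem on a decreasing sequence of neighborhoods finishes; your Lebesgue-density argument yielding $Th_x(y)=\id$ only almost everywhere, followed by a countable patching, reaches the same conclusion by a slightly longer but equally valid path.
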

\begin{proof}
We first observe that taking a finite partition of $|M_U|=|N_U|$, say $\abs{M_U}=\bigcup
\Gamma_\alpha$,  it suffices to prove the lemma for   measurable subsets $K$  in  $\Gamma_\alpha$.  In order to obtain a convenient partition,  we take a third branching structure which is the union of the two branching structures with halved
weights as described at the end of Section \ref{essentialp}. The index set is the disjoint union $I''=I\cup J$. The set $M_U$ is partitioned into the sets
$$M^{A, P}_U=\{x\in M_U\vert \,  \text{$I_x''=A$ and $P_x''=P$}\}$$
where $P$ is a partition of the index set $A\subset I''$ and where $P_x''$ is the partition of $I_x''$ into its equivalence classes as described above. Then the sets
$\Gamma_{A, P}=\abs{M_U^{A,P}}$ define a partition of $\abs{M_U}$.
By   Lemma \ref{lemA}  the set  $\Gamma_{A, P}\subset S$ is  of measure zero if $P\neq \{A\}$. Therefore, we may assume that $P=\{A\}$. Setting
$\Gamma_A=\Gamma_{(A,\{A\})}$ we then have, up to a set of measure $0$ in $S$, the partition
$$
\abs{M_U}=\bigcup_{A\subset I''} \Gamma_A.
$$
As explained at the beginning of the proof we may now assume that $K\subset \Gamma_A$ for some index set $A\subset I''$. In view of the definition of $M^{A, \{A\}}_U$, we conclude from Theorem \ref{thmindex}, that  the sets $I_x$  and $J_x$ are constant for all
$x\in M^{A, \{A\}}_U$, say  $I_x=I_K$ and $J_x=J_K$. We shall denote by $K_i\subset M_i$ if $i\in I_K$ and by $K_j\subset M_j$ if $j\in J_K$ the preimages of $K$ under the projection maps $M_i\to \abs{M_U}$ and $N_j\to \abs{M_U}$.
By the definition of the partition of the index set $A=I_K\cup J_K$ into equivalence classes of essential points, we know that $i\sim i'$ for all $i, i'\in I_K$ and $j\sim j'$ for all $j, j'\in J_K$ and also $i\sim j$ for all $i\in I_K$ and $j\in J_K$. Therefore, the sets $K_i$ and $K_j$ are all the same as subsets of $M_U$, but may lie  in possibly different manifolds $M_i$ and $N_j$. By construction, the points $x\in K_i$ under consideration are all essential points and we conclude from Proposition \ref{prop2.9} (in  the special case  $g=\text{id}$) that the identity map  $K_i\subset M_i\to K_{i'}\subset M_{i'}$,  for $i$ and $i'\in I_K$, has a smooth extension to a diffeomorphism $\psi:U(K_i)\subset M_i\to U(K_{i'})\subset M_{i'}$ between open neighborhoods.  Therefore,
$$\int_{U(K_i)}\psi^* (j^*_{M_{i'}}\omega )=\int_{U(K_{i'})}(j_{M_{i'}}^*\omega ).$$
In view of the definition of essential points in the special case when $g=\text{id}$, we conclude for all $x\in K_i$ that $\psi (x)=x$ and $T\psi (x)=\text{id}$ so that $\psi^* (j^*_{M_{i'}}\omega)=j^*_{M_i}\omega$ on $K_i$. Hence taking a decreasing sequence  of open neighborhoods of $K_i$ in $M_i$ we obtain, by means of the Lebesgue's convergence theorem, that
$$
\int_{K_i}\omega\vert M_i =\int_{K_{i'}} \omega\vert M_{i'}.
$$
By the same arguments,
$$
\int_{K_i}\omega\vert M_i =\int_{K_{j}} \omega\vert N_j  =\int_{K_{j'}} \omega\vert N_{j'}
$$
for all $i\in I_K$ and $j, j'\in J_K$. If $|x|\in K$, then $\vartheta (|x|)=\sum_{i\in I_K}\sigma_i=\sum_{j\in J_K}\tau_j$ and we can compute,
\begin{equation*}
\begin{split}
\sum_{i\in I}\sigma_i \int_{K_i}\omega\vert M_i&=
\sum_{i\in I_K}\sigma_i \int_{K_i}\omega \vert M_i\\
&= \sum_{j\in J_K}\tau_j \int_{L_j}\omega\vert N_j=\sum_{j\in J}\tau_j \int_{L_j}\omega\vert N_j.
\end{split}
\end{equation*}

This  completes the proof that the number $\muo^U(K)$ does not depend on the choice of a branching structure in $U$.
\end{proof}

Next we investigate the compatibility of our definition of  $\muo^U(K)$ with  restrictions  of the local branching structure $(M_i)_{i\in I}$ and $(\sigma_i)_{i\in I}$ on the open neighborhood $U=U(z_0)$ of the point $z_0\in \sth$.
On $U$ the isotropy group $G=G_{z_0}$ acts by the natural representation $\varphi :G\to \dif (U)$. The construction of a restriction is as follows. For a   point $x\in U\cap \sth$,  we introduce the subgroup $H\subset G$ by
$$H=\{g\in G\vert \, \varphi_g (x)=x\}.$$
From Proposition \ref{prop1.2} one concludes that $g\in H$ if and only if $\Gamma (g, x)$ belongs to the isotropy group $G_x$ of the point $x$. The mapping $\alpha:H\to G_x$ defined by $\alpha (g)=\Gamma (g, x)$ is an isomorphism of groups.  The mapping $G_x\to \dif (U)$, $\gamma \mapsto \varphi_{\alpha^{-1}(\gamma )}$ is a natural representation of the isotropy group $G_x$ on the set $U$. It can be identified with the natural representation $\varphi:H\to \dif (U)$ of the subgroup $H\subset G$.  Now we take an open neighborhood $V=V(x)\subset U$ so that $\varphi_g (V)=V$ if $g\in H$ and $\varphi_g (V)\cap V=\emptyset$ if $g\not \in H$, and set
$$\wt{M}_j=V(x)\cap M_j\quad \text{for $j\in J$},$$
where $J=\{i\in I\vert \, V(x)\cap M_i\neq \emptyset\}$.  Then $(\wt{M}_j)_{j\in J}$ with $(\sigma_j)_{j\in J}$ constitutes  a local branching structure on the open set $V$ on which the isotropy group  $G_x$  (identified with $H\subset G$) acts by the natural representation of $H$.  We shall refer to this construction as the {\bf restriction of the local branching structure} of $U=U(z_0)$ to the neighborhood $V=V(x)\subset U$. Setting
$$M_V=\bigcup_{j\in J}\wt{M}_j=M_U\cap V$$
we consider a measurable subset $K\subset \abs{M_V}\subset S$. As before we define the measure
$\muo^V (K)$ as
$$
\muo^V(K)=\dfrac{1}{\sharp \wt{G}_{\e}}\sum_{j\in J}\sigma_j\int_{\wt{K}_j}\omega\vert \wt{M}_j
$$
where $\wt{K}_j\subset \wt{M}_j$ is the preimage of $K$ under the projection map $\wt{M}_j\to \abs{M_V}$ and
$\wt{G}_{\e}$ is the effective part of the isotropy group $G_x$.

On the other hand, since $\abs{M_V}\subset \abs{M_U}$, we also have $K\subset \abs{M_U}$ and hence  the  measure
$$\muo^U(K)=\dfrac{1}{\sharp G_{\e}}\sum_{i\in I}\sigma_i\int_{K_i}\omega\vert M_i
$$
where $K_i\subset M_i$ is the preimage of $K$ under the projection $M_i\to \abs{M_U}$.

\begin{lem}[{\bf Restrictions}] \label{lemrest} If $V\subset U$ is a restriction of $U$ and $K\in \cl (S)$ is a measurable set contained in $\abs{M_V}$, then
$$\muo^V(K)=\muo^U(K).$$
\end{lem}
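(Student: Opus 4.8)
The plan is to reduce the asserted identity, by way of the Independence Lemma~\ref{lemB}, to a bookkeeping statement about how $U$ decomposes into translates of $V$ under the natural $G$-action. Write $G=G_{z_0}$, let $G_0$ be its ineffective part on $U$, and recall that $H=\{g\in G\mid\varphi_g(x)=x\}\supseteq G_0$; by construction the natural representation of $G_x$ entering the definition of $\muo^V$ is the restriction $\varphi|_H$ to $U$, whose kernel is $G_0$, so $\sharp\wt G_{\e}=\abs{H}/\abs{G_0}$ and therefore
\[
\sharp G_{\e}=[G:H]\cdot\sharp\wt G_{\e}.
\]
Set $m=[G:H]$ and fix coset representatives $g_1=\id,g_2,\dots,g_m$ of $G/H$.

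First I would record the geometry of the translates. Since $\varphi_g(V)=V$ for $g\in H$ and $\varphi_g(V)\cap V=\emptyset$ for $g\notin H$, the sets $\varphi_{g_a}(V)$, $a=1,\dots,m$, are pairwise disjoint and their union $W:=\bigcup_{a=1}^m\varphi_{g_a}(V)$ is an open subset of $U$ with $\varphi_g(W)=W$ for every $g\in G$ and $\abs{W}=\abs{V}$ in the orbit space. Each $\varphi_{g_a}$ is induced by the morphisms $\Gamma(g_a,\cdot)$, hence is an sc-diffeomorphism of $U$ preserving $\supp\Theta=M_U$, the degeneracy index, and --- by condition~(2) in the definition of an orientation --- the orientations of the local branches; moreover $\varphi_{g_a}^{\ast}\omega=\omega$ because $\omega$ is compatible with morphisms. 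Consequently, for each $a$ the collection $\bigl(\varphi_{g_a}^{-1}(M_i)\cap V\bigr)_{i\in I}$ with the weights $(\sigma_i)_{i\in I}$ is an oriented local branching structure on $V$; it has underlying set $\varphi_{g_a}^{-1}(M_U)\cap V=M_U\cap V=M_V=\bigcup_{j\in J}\wt M_j$, and it carries the same orientation as $(\wt M_j)_{j\in J}$.

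Now fix $K\in\cl(S)$ with $K\subseteq\abs{M_V}$. If $q\in M_i$ satisfies $\abs q\in\abs{M_V}$, then $q$ is joined inside $U$ by a morphism, i.e.\ by some $\varphi_g$, to a point of $V\subseteq W$, whence $q\in W$; thus the preimage $K_i\subseteq M_i$ of $K$ lies in $M_i\cap W$, and $K_i=\bigsqcup_{a=1}^m K_i^a$ with $K_i^a=K_i\cap\varphi_{g_a}(V)$. By countable additivity of the signed measure of $\omega\vert M_i$ and the change-of-variables identity for the orientation-preserving diffeomorphism $\varphi_{g_a}^{-1}\colon M_i\cap\varphi_{g_a}(V)\to\varphi_{g_a}^{-1}(M_i)\cap V$ (using $\varphi_{g_a}^{\ast}\omega=\omega$),
\[
\int_{K_i}\omega\vert M_i=\sum_{a=1}^m\int_{K_i^a}\omega\vert M_i=\sum_{a=1}^m\int_{\varphi_{g_a}^{-1}(K_i^a)}\omega\vert\bigl(\varphi_{g_a}^{-1}(M_i)\cap V\bigr).
\]
Since $\abs{\varphi_{g_a}(p)}=\abs p$, the set $\varphi_{g_a}^{-1}(K_i^a)$ is exactly the preimage of $K$ under $\varphi_{g_a}^{-1}(M_i)\cap V\to\abs{M_V}$ --- the \emph{same} set $K$ for every $a$. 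Summing over $i\in I$ and applying Lemma~\ref{lemB} on $V$ to the two branching structures $\bigl(\varphi_{g_a}^{-1}(M_i)\cap V\bigr)_{i\in I}$ and $(\wt M_j)_{j\in J}$ (equal underlying sets, equal orientations) yields
\[
\sum_{i\in I}\sigma_i\int_{K_i}\omega\vert M_i=\sum_{a=1}^m\sum_{i\in I}\sigma_i\int_{\varphi_{g_a}^{-1}(K_i^a)}\omega\vert\bigl(\varphi_{g_a}^{-1}(M_i)\cap V\bigr)=m\sum_{j\in J}\sigma_j\int_{\wt K_j}\omega\vert\wt M_j.
\]
Dividing by $\sharp G_{\e}=m\,\sharp\wt G_{\e}$ gives $\muo^U(K)=\muo^V(K)$.

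The additivity of the signed measures and the change-of-variables computations are routine given the earlier results; the step requiring care is the geometric/combinatorial one: identifying the correct $G$-invariant open set $W$, checking $K_i\subseteq W$, and matching the factor $m$ produced by the $m$ translates of $V$ with the ratio $\sharp G_{\e}/\sharp\wt G_{\e}$ of the two normalization constants. I expect that matching to be the only delicate point.
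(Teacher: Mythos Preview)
Your proof is correct and follows essentially the same route as the paper's: decompose $G$ into cosets of $H$, use the disjoint translates $\varphi_{g_a}(V)$ to split each $K_i$, invoke $\varphi_g^\ast\omega=\omega$ together with the Independence Lemma~\ref{lemB} to reduce each piece to the same integral over $(\wt M_j)_{j\in J}$, and finally match the coset count $m=[G:H]$ with $\sharp G_{\e}/\sharp\wt G_{\e}$. The only noteworthy difference is in this last step: you compute $\sharp\wt G_{\e}=\abs{H}/\abs{G_0}$ directly from the observation $G_0\subseteq H$ (so the kernel of $\varphi|_H$ on $U$ is exactly $G_0$), whereas the paper appeals to an external result (Proposition~5.6 in \cite{HWZ3}) stating $\abs{(G_x)_0}=\abs{G_0}$; your argument is the more self-contained one and is valid precisely because the restriction construction specifies that the natural representation of $G_x$ is $\varphi|_H$ acting on all of $U$.
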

\begin{proof}
We identify the isotropy group $G_x$ with the subgroup $H\subset G$. Let
$$G=\bigcup_{r\in R}r\cdot H$$
be the partition of $G$ into the right cosets, where $R\subset G$ is the set of representatives. Then we introduce the open neighborhoods $V_r=\varphi_r (V)$ of the points $\varphi_r (x)$ for all $r\in R$.

\begin{figure}[htbp]
\mbox{}\\[2ex]
\centerline{\relabelbox
\epsfxsize 3.8truein \epsfbox{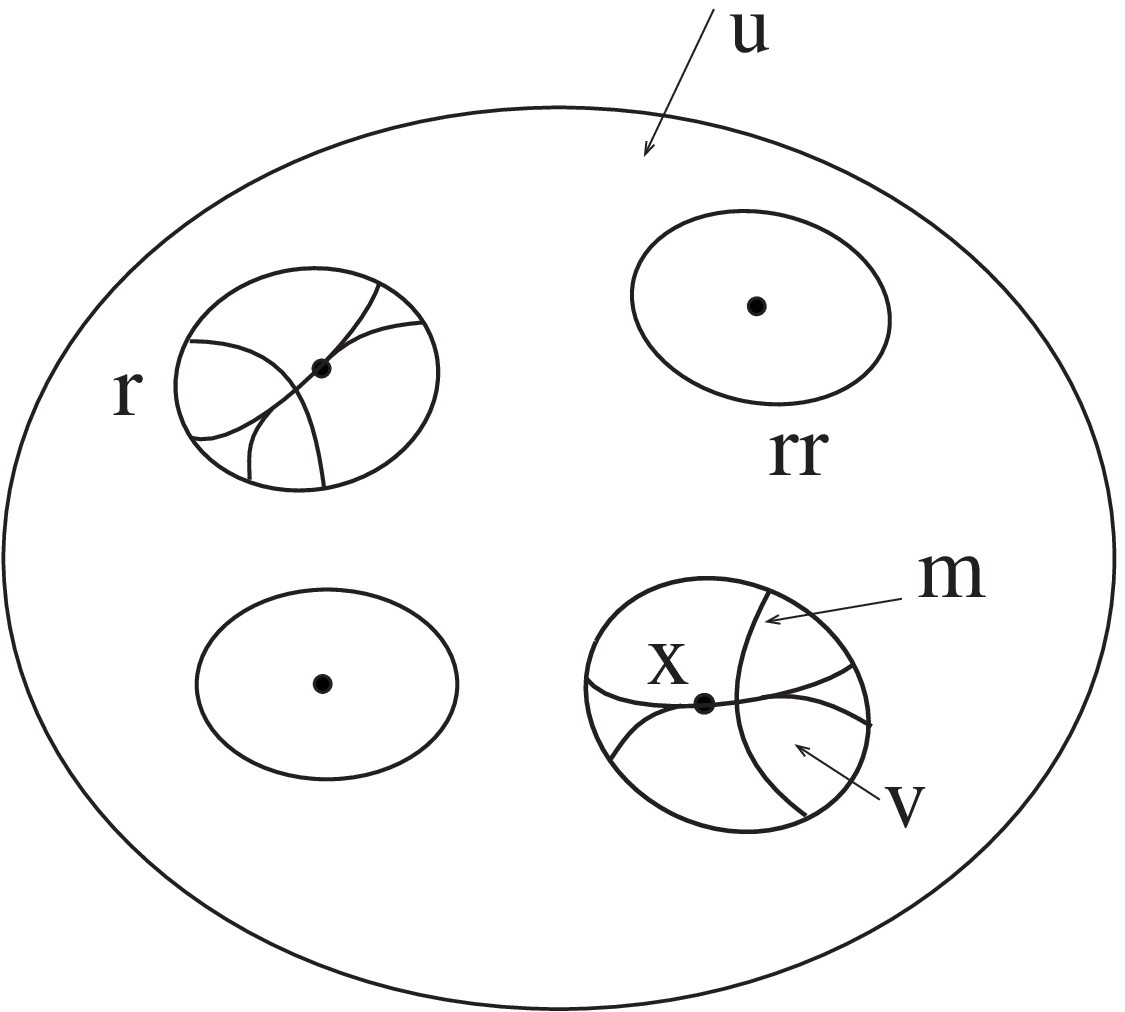}
\relabel {v}{$V$}
\relabel {r}{$V_r$}
\relabel {rr}{$V_{r'}$}
\relabel {u}{$U$}
\relabel {x}{$x$}
\relabel {m}{$\wt{M}_j$}
\endrelabelbox}
\caption{}\label{Fig2}
\mbox{}\\[1ex]
\end{figure}

In $V_r$ we have the natural representation of the isotropy group of $\varphi_r (x)$ by diffeomorphisms from the natural representation of $G$. Abbreviating $\wt{K}=\bigcup_{j\in J}\wt{K}_j\subset V$,  we compute
$$
\muo^U(K)=\dfrac{1}{\sharp G_{\e}}\sum_{i\in I}\sigma_i\int_{K_i}\omega\vert M_i
$$
where $K_i=[\bigcup_{r\in R}\varphi_r (\wt{K})]\cap M_i.$ Hence
\begin{equation*}
\begin{split}
\muo^U(K)&=\dfrac{1}{\sharp G_{\e}}\sum_{i\in I}\sigma_i\sum_{r\in R}\int_{\varphi_r(\wt{K})\cap M_i}\omega\vert M_i\\
&=\dfrac{1}{\sharp G_{\e}}\sum_{r\in R}\sum_{i\in I}\sigma_i\int_{\varphi_r(\wt{K})\cap M_i}\omega\vert M_i.
\end{split}
\end{equation*}
Observe that $\varphi_r (\wt{K})\subset V_r$. Since $\varphi_r$ is an sc-diffeomorphism of $U$ mapping $V$ onto $\varphi_r (V)=V_r$ and $\sth$ is invariant under $\varphi_r$, we obtain another branching structure in $V_r$ consisting of the branches $\varphi_r (\wt{M}_j)_{j\in J}$ and the weights $(\sigma_j)_{j\in J}$. Hence, in view of
Lemma \ref{lemA} and since $\varphi_g^*\omega=\omega$ for all $g\in G$,
\begin{equation*}
\begin{split}
\sum_{i\in I}\sigma_i\int_{\varphi_r(\wt{K})\cap M_i}\omega\vert M_i&=\sum_{j\in j}\sigma_j\int_{\varphi_r(\wt{K}_j)}\omega\vert  \varphi_r(\wt{M}_j)\\
&=\sum_{j\in J}\sigma_j\int_{\wt{K}_j}\varphi_r^*\omega\vert  \wt{M}_j=
\sum_{j\in J}\sigma_j\int_{\wt{K}_j}\omega\vert  \wt{M}_j.
\end{split}
\end{equation*}
Consequently,
$$\muo^U(K)=\dfrac{1}{\sharp G_{e}}\abs{R}\sum_{j\in J}\sigma_j\int_{\wt{K}_j}\omega\vert \wt{M}_j.$$
Now, $\abs{R}=\frac{\abs{G}}{\abs{H}}$. Since the groups $H$ and $G_x$ are isomorphic, we have $\abs{H}=\abs{G_x}$. Moreover, $\sharp G_{\e}=\frac{\abs{G}}{\abs{G_0}}$ and $\sharp \wt{G}_{\e}=\frac{\abs{G_x}}{\abs{(G_x)_0}}.$
In view of Proposition 5.6 in \cite{HWZ3}, the ineffective parts $(G_x)_0$ and $G_0$ are isomorphic as groups so that
$\abs{(G_x)_0}=\abs{G_0}$. Summarizing,
$$
\dfrac{1}{\sharp G_{\e}}\cdot \abs{R}=\dfrac{1}{\dfrac{\abs{G}}{\abs{(G_x)_0}}} \cdot \dfrac{\abs{G}}{\abs{G_x}}=\dfrac{1}{\sharp \wt{G}_{\e}}.
$$
We have verified that $\muo^U(K)=\muo^V(K)$ as claimed.
\end{proof}
Next we establish a property which we call morphism
invariance. Having a subset $K$ of $S$ and a local branching
structure on the open  set $U\subset X$ so that $|M_U|=K$,  it is possible that there
is a second open set $U'\subset X$ which is disjoint from $U$, but also
equipped with a local branching structure so that $|M_{U'}|=K$. Of
course, we expect the associated measures constructed from $\omega$
to be the same. This follows from the next lemma.
\begin{lem}[{\bf Morphism Invariance}] \label{lemD} Let  $U=U(x)$ and $U'=U(x')$ be open subsets of the ep-groupoid $X$ invariant under the natural representations  of
$G_{x}$ and $G_{x'}$, respectively. Assume $\varphi:x\rightarrow x'$ is a morphism
 so that the corresponding
$t\circ s^{-1}$ gives an sc-diffeomorphism $U\rightarrow U'$. Moreover, assume that
$U$ and $U'$ contain local branching structures  of  the $n$-dimensional
branched ep-subgroupoid $\Theta$.   If  $\omega$ is an sc-differential $n$-form on $X_\infty$,  then the  associated measures $\mu_\omega^U$ and $\mu_\omega^{U'}$ coincide.\end{lem}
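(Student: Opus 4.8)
The plan is to reduce the statement to the Independence Lemma~\ref{lemB} by transporting the branching structure on $U$ to $U'$ along the sc-diffeomorphism that the morphism $\varphi$ provides, and then to apply the change-of-variables formula for smooth $n$-forms under a diffeomorphism.

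First I would record the relevant properties of the sc-diffeomorphism $\Phi:=t\circ s^{-1}:U\to U'$ extending $\varphi$. For every $y\in U$ the arrow $s^{-1}(y)$ is a morphism $y\to\Phi(y)$, so $\abs{\Phi}$ is the identity on the orbit space; in particular $\abs{\sth\cap U}=\abs{\sth\cap U'}$ as subsets of $S$, so $\mu_\omega^U$ and $\mu_\omega^{U'}$ are genuinely measures on compact subsets of the same set. Since $\Theta$ is a functor, $\Theta\circ\Phi=\Theta$ on $U$, hence $\Phi$ maps $\sth\cap U$ bijectively onto $\sth\cap U'$ and matches the weights; since $\omega$ is compatible with morphisms, $\Phi^\ast\omega=\omega$ at every point of $\sth\cap U\subset X_\infty$; since $\Phi$ is a local sc-diffeomorphism it preserves the degeneracy index; and by the compatibility condition~(2) in the definition of an orientation, the tangent map of $\Phi$ sends oriented branches to oriented branches preserving orientation. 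Finally, $\varphi$ conjugates the natural representations (uniqueness in Proposition~\ref{prop1.2}), so it induces an isomorphism $G_x\to G_{x'}$, and by Proposition~5.6 in~\cite{HWZ3} the ineffective parts correspond, whence $\sharp G_{x,\text{e}}=\sharp G_{x',\text{e}}$.

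Next I would push the local branching structure $(M_i,\sigma_i)_{i\in I}$ on $U$ forward to $(\Phi(M_i),\sigma_i)_{i\in I}$ on $U'$ and check, using the properties above, that this is again a local branching structure for $\Theta$ on $U'$ in the sense of Definition~\ref{def1}: sc-diffeomorphisms preserve finite dimensional submanifolds, their common dimension, good position to $\partial X$ (via the degeneracy index) and properness of the inclusions; the natural representation of $G_{x'}$ on $U'$ is given; the weights reproduce $\Theta$ by functoriality; and the pushed-forward orientation on $\Phi(M_i)$ agrees on $T\sth$ with the one used on $U'$, again by condition~(2). Hence, by Lemma~\ref{lemB} applied on $U'$, the number $\mu_\omega^{U'}(K)$ may be computed with $(\Phi(M_i),\sigma_i)_{i\in I}$ in place of the original branching structure on $U'$. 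It then remains only to compare the two formulas: for $K$ in a compact subset of $\abs{\sth\cap U}$ with preimage $K_i\subset M_i$ under $M_i\to\abs{\sth\cap U}$, the fact that $\abs{\Phi}=\id$ shows that the preimage of $K$ in $\Phi(M_i)$ is $\Phi(K_i)$; since $\Phi\vert M_i$ is an orientation-preserving diffeomorphism and $\Phi^\ast(\omega\vert\Phi(M_i))=j_{M_i}^\ast\Phi^\ast\omega=j_{M_i}^\ast\omega=\omega\vert M_i$, the transformation rule for the Lebesgue signed measures of smooth $n$-forms (valid for the decreasing-neighbourhood definition of $\int_{K_i}\omega\vert M_i$ by Lebesgue's dominated convergence theorem, exactly as in the proof of Lemma~\ref{lemB}) gives $\int_{\Phi(K_i)}\omega\vert\Phi(M_i)=\int_{K_i}\omega\vert M_i$. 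Multiplying by $\sigma_i$, summing over $i\in I$, and dividing by $\sharp G_{x',\text{e}}=\sharp G_{x,\text{e}}$ yields $\mu_\omega^{U'}(K)=\mu_\omega^U(K)$.

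I expect the main obstacle to be not any single computation but the verification that the overhead data match under $\Phi$: namely that $\Phi$ intertwines the natural representations of $G_x$ and $G_{x'}$ and respects their ineffective parts, so that the normalising factors $\sharp G_{\text{e}}$ coincide. Once that is in place, the lemma is essentially the Independence Lemma~\ref{lemB} together with the naturality of integration of $n$-forms under the orientation-preserving sc-diffeomorphism $\Phi$.
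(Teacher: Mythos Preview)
Your proposal is correct and follows essentially the same approach as the paper: push the local branching structure on $U$ forward along $\Phi=t\circ s^{-1}$ to obtain a branching structure on $U'$, invoke Lemma~\ref{lemB} to compute $\mu_\omega^{U'}$ with this transported structure, and conclude from $\Phi^\ast\omega=\omega$. The paper's proof is a two-line sketch of exactly this argument; you have simply supplied the verifications (equality of $\sharp G_{\text{e}}$, preservation of good position and orientation, the change-of-variables identity) that the paper leaves implicit.
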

\begin{proof}
We  already  know from Lemma \ref{lemB} that the measures  $\mu_\omega^U$ and
$\mu_\omega^{U'}$ do not depend on the local branching structures.
We fix a local branching structure $(M_i)_{i\in I}$ with the weights $(\sigma_i)_{i\in I}$ on the sets $U$. We  push  the local branches $M_i$ forward by an  sc-diffeomorphism $t\circ s^{-1}$  to obtain local branches $M_i'$ on $U'$. Then $(M_i')_{i\in I}$   with the same weights $(\sigma_i)_{i\in I}$ is a local branching structure in $U'$.  Since
$\omega$ is invariant under this sc-diffeomorphism,  the result follows.
\end{proof}
We continue with the proof of Theorem \ref{th1}. For every
point $s\in S$,  we take  $x\in \supp \Theta$ so that  $|x|=s$, and an
open subset $U(x)\subset X$ invariant under the natural representation of
$G_x$ and  containing a  local branching structure. Since $S$ is compact, we find
finitely many points $x_1, \ldots  ,x_k$ so that the open sets $\abs{U(x_i)}$ cover $S$.  We abbreviate $U_i=U(x_i)$ and denote by $U_i^\ast =\pi^{-1}(\pi (U_i))$
the saturation of $U_i$. We add another open set $U^\ast_0$ so that
${(U^\ast_i)}_{1\leq i\leq k}$ is a saturated open cover of $X$ and,  in
addition,  $(\abs{U^\ast_i\setminus \overline{U^\ast_0}})_{1\leq i\leq k}$,  is an open  cover of $S$.

Then  there exists an sc-smooth partition of unity, Theorem \ref{scpounity},
 $(\beta_i)_{0\leq i\leq k}$ on $X$  such that $\supp \beta_i \subset U_i^\ast$  and each  $\beta_i$  is compatible with
morphisms. Moreover,
$$
\sum_{i=1}^k \beta_i =1 \ \ \hbox{on}\ \bigcup_{i=1}^k M_{U_i}.
$$
Fix an sc-differential $n$-form $\omega$ and define $\omega_i=\beta_i\omega$. Then  $\supp \omega_i    \subset U_i^\ast$,
$$
\omega = \sum_{i=0}^k \omega_i,
$$
and $\omega_0=0$ on $\bigcup_{i=1}^k M_{U_i}$.   If $b_i$, $i=1,\ldots ,k$, is the induced continuous partition of unity on $S$ defined by $b_i(|x|)=\beta_i (x)$ for $|x|\in S$, then the support $\supp b_i\subset \abs{M_{U_i}}$ is compact.  Hence, for every $i=1,\ldots  ,k$,  we can construct
a measure on ${\mathcal L}(S)$ by
$$
K\mapsto \mu_{\omega_i}^{U_i}(K\cap \abs{M_{U_i}})=\mu_{\omega_i}^{U_i}(K\cap \supp b_i).
$$
By abuse of notation we  denote it by $\mu_{\omega_i}^{U_i}$.  Then  we
define the measure $\mu_\omega$ on ${\mathcal L}(S)$ by
\begin{equation}\label{deff}
\mu_\omega := \sum_{i=1}^k \mu_{\omega_i}^{U_i}.
\end{equation}
Next we shall  demonstrate  that the measure $\mu_\omega$ is well defined and
independent of the choices made in the construction on the
right-hand side. Once the lemma is proved, Theorem \ref{th1} follows immediately.

\begin{lem}\label{lemma3.8a}
 For every  $s\in S$,  there exist an open neighborhood $O(s)$ and an open set
$U=U(x)\subset X$ covering $O(s)$ such that the following holds. The set $U$ is  invariant under the natural representation  of the isotropy group
$G_{x}$, contains a local branching structure, and  for every  measurable subset $K\in {\mathcal L}(S)$ contained in a compact subset
of $O(s)$,
$$
\mu_\omega(K) = \mu_\omega^U(K).
$$
\end{lem}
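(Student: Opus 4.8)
The plan is to start from the defining formula \eqref{deff}, $\mu_\omega = \sum_{i=1}^k \mu_{\omega_i}^{U_i}$, and localize near a point $s\in S$. First I would pick a representative $x\in \supp\Theta$ with $|x|=s$, an open $G_x$-invariant neighborhood $U=U(x)\subset X$ carrying a local branching structure $(M_i)_{i\in I}$, and then shrink: choose an open neighborhood $O(s)$ of $s$ in $S$ whose closure in $S$ is compact and contained in $|M_U|$, and small enough that $O(s)$ meets $|M_{U_i}|$ only for those $i$ with $s\in |M_{U_i}|$. For each such $i$, the morphism $\varphi:x\to x_i$ (which exists because $|x|=|x_i|$ on the overlap, up to passing to a point in $\supp\Theta\cap U_i$ mapping to $s$) extends via $t\circ s^{-1}$ to an sc-diffeomorphism between suitable neighborhoods, so by the Morphism Invariance Lemma \ref{lemD} the measures $\mu_{\omega}^{U_i}$ and $\mu_\omega^U$ agree on sets lying in $|M_{U_i}\cap V|$ for a common restriction neighborhood $V$.

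The key steps, in order, are: (1) reduce to $K$ contained in a compact subset of $O(s)$, so that only finitely many indices $i$ contribute to the sum defining $\mu_\omega(K)$, namely those $i$ for which $\supp b_i$ meets $K$; for those indices, $s\in |M_{U_i}|$; (2) for each such $i$, use that $U$ and $U_i$ both carry local branching structures and are linked by a morphism extending to an sc-diffeomorphism between restrictions $V\subset U$ and $V_i\subset U_i$ — here I would invoke Lemma \ref{lemrest} (Restrictions) to pass from $\mu_\omega^{U_i}$ to $\mu_\omega^{V_i}$ and from $\mu_\omega^U$ to $\mu_\omega^V$, and then Lemma \ref{lemD} (Morphism Invariance) to identify $\mu_\omega^{V_i}$ with $\mu_\omega^{V}$; (3) combine: for $K$ in a compact subset of $O(s)$,
$$
\mu_\omega(K) = \sum_{i=1}^k \mu_{\omega_i}^{U_i}(K) = \sum_{i=1}^k \mu_{\omega_i}^{U}(K) = \mu_{\sum_i \omega_i}^U(K) = \mu_\omega^U(K),
$$
using linearity of $\omega\mapsto \mu_\omega^U$ in the form (property (1) of the to-be-proved theorem, which is immediate from the definition of $\mu_\omega^U$ as a finite sum of integrals linear in $\omega$) together with $\sum_{i=1}^k \omega_i = \omega - \omega_0$ and the fact that $\omega_0$ vanishes on $\bigcup_j M_{U_j}$, hence contributes nothing to $\mu_{\cdot}^U(K)$ since $K\subset |M_U|$.

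The main obstacle I expect is step (2): carefully setting up, for each contributing index $i$, a single neighborhood $V$ (inside $U$) and $V_i$ (inside $U_i$) with a genuine sc-diffeomorphism $V\to V_i$ coming from a morphism, and checking that $K$ (or rather its relevant piece) actually lies in $|M_V| = |M_{V_i}|$ so that Lemmas \ref{lemrest} and \ref{lemD} apply on the nose. This requires being a little careful about isotropy: the morphism $\varphi:x\to x_i$ need not send $x$ to $x_i$ but to some point with the same orbit, so one works with the subgroup $H_i = \{g\in G_{x_i}\mid \varphi_g(\text{point})=\text{point}\}$ as in the setup preceding Lemma \ref{lemrest}, and shrinks $O(s)$ once and for all to be contained in the intersection of all the relevant $|M_{V_i}|$. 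Once the neighborhoods are aligned, the independence of $\mu_\omega^U$ from the branching structure (Lemma \ref{lemB}) guarantees that the choice of structure pushed forward by the sc-diffeomorphism is immaterial, and the computation in step (3) closes the argument.
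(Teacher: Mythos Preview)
Your plan is correct and follows essentially the same route as the paper's own proof: restrict each $U_i$ to a small $G_{x_i}$-invariant neighborhood $V_i$ around a preimage of $s$ (Lemma \ref{lemrest}), identify all the $V_i$ with one fixed $V$ via morphism-induced sc-diffeomorphisms (Lemma \ref{lemD}), and then use linearity together with $\sum_i\beta_i=1$ on $\bigcup M_{U_j}$ to recombine $\sum_i\mu_{\omega_i}^{V}$ into $\mu_\omega^V$. The paper simply takes $U=V_1$ rather than starting from a separately chosen $U(x)$, but this is cosmetic; your anticipated ``main obstacle'' about aligning the $V_i$ and handling isotropy is exactly the point the paper disposes of in one sentence by shrinking the $V_i$ to be mutually sc-diffeomorphic via $t\circ s^{-1}$.
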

In  other words, the formula  \eqref{deff}  defines a signed measure $\mu_\omega$ on
$(S,{\mathcal L}(S))$ which locally coincides with the previously
constructed local measures $\mu_\omega^U$. There can be, of course,
at most one such measure. Hence we conclude  the existence and the  uniqueness of
a measure $\mu_\omega$ on $(S,{\mathcal L}(S))$ which has the
properties listed in Theorem \ref{th1}.
\begin{proof}
Assume that $s\in S$ is given. Denote by $I_s\subset \{1,\ldots ,k\}$ the set
of all $i$ such that there exists $x_i\in U_i$ with $[x_i]=s$.  Now we choose a small open neighborhoods $V_i\subset U_i$ of $x_i$ on which the isotropy  group $G_{x_i}$ acts by its natural representation. Since all points $x_i$ belong to the same equivalence class $s\in S$, we can  choose these open neighborhoods $V_i$ such that they are, in addition, sc-diffeomorphic by means of diffeomorphisms  of the form $t\circ s^{-1}$. We define the open neighborhood $O(s)\subset S$ by $O(s)=S\cap \abs{M_{V_i}}$ for some and hence for every $i\in \{1, \ldots, k\}$. In every open neighborhood $V_i$ we have the induced branching structure from the branching structure of $U_i$, namely the restriction of the local branching structure of $U_i$ to $V_i$. If $K\in {\mathcal L}(S)$ is a  measurable set contained in a compact subset of the open neighborhood $O(s)$ we compute

\begin{equation*}
\begin{split}
\mu_\omega(K)&=\sum_{I_s} \mu_{\omega_i}^{U_i}(K)\ \ \text{(Definition)}\\
&= \sum_{I_s} \mu_{\omega_i}^{V_i}(K)\ \ \text{(Restrictions)}\\
&=\sum_{I_s}\mu_{\omega_i}^{V_1}(K)\ \text{(Morphism Invariance)}\\
&=\sum_{I_s} \dfrac{1}{\sharp G^1_{\e} } \sum_J \sigma_j\int_{K_j}\omega_i\vert M_j\ \ \text{(Independence)}\\
&= \dfrac{1}{\sharp G^1_{\e} }\sum_J\sigma_j \int_{K_j}\sum_{I_s}\omega_i\vert M_j\\
&=\dfrac{1}{\sharp G^1_{\e} }\sum_J\sigma_j\int_{K_j} \omega|M_j\ \text{(Partition of unity)}\\
&=\mu^{V_1}_{\omega}(K).
\end{split}
\end{equation*}
Putting $U=V_1$ completes the proof of Lemma \ref{lemma3.8a} and hence of Theorem
\ref{th1}.
\end{proof}

\subsection{Stokes' Theorem}
In this section we prove Theorem 1.10.  Since we already know that the definitions  of the measures do not depend on the choices involved,  the proof of Stokes'
theorem is  rather straightforward.

As before we assume that the ep-groupoid is build on
separable sc-Hilbert spaces.  Let  $\Theta$ be  an oriented
branched ep-subgroupoid whose support  $\supp \Theta$ has a compact orbit space $S=\abs{\supp \Theta}$.   We may assume that $\Theta$ is of dimension $n$ and
$\omega$  is an  sc-differential form of degree $n$ on $X_\infty$.

By the compactness of $S$, we find finitely many  open sets $U_k=U(x_k)$, $1\leq  k\leq n$,  in $X$ such that  the sets $\abs{U_k}$ cover $S$ and every $U_{k}$ is invariant with respect to  the natural $G^k$-action where $G^k=G_{x_k}$ is the  isotropy group of  the point $x_k$.
Moreover,  every $U_k$ contains  the local  branching structures  $(M^k_i)_{i\in I^k}$  with the associated weights $(\sigma_i^k)_{i\in I^k}$. The local branches $M_i^k$  are properly embedded finite dimensional submanifolds of $X$, all  of the same dimension $n$.  Abbreviating by $U_{k}^*$ the saturations of the sets $U_{k}$, we add  another open set $U^\ast_0$ so that
the sets $U^\ast_{k}$  cover $X$ and
the sets $\abs{U^\ast_{k}\setminus \overline{U^\ast_0}}$  still cover $S$.
Next we  take an sc-smooth partition of unity  $\beta_0,\ldots ,\beta_n$ on $X$  such that  $\supp \beta_k\subset U^{\ast}_k$ and  each  $\beta_k$  is compatible with
morphisms.  Then
$$\text{$\sum_{k=1}^n \beta_{k} =1$\,  on  \, $\bigcup_{k=1}^n  M_{U_{k}}$.}$$
Setting  $\omega_k=\beta_k \omega$, we have $\supp \omega_k\subset U^*_k$ and
$\omega=\sum_{k=1}^n\omega_k$.

If  the set $U_k$ does not contain parts of $\partial S$,  we
conclude  by the standard Stokes' theorem
$$
\int_{M_i^k} d\omega_{k} =0.
$$
In the case of  boundary components we obtain,  again by Stokes' theorem,
$$
\int_{M_i^k} d\omega_k=\int_{\partial M_i^k} \omega_{k}.
$$
Summing  up  the left hand side as well as the right hand side we obtain
\begin{equation*}
\begin{split}
\mu_{d\omega}^{(S,\theta)}(S) &=
\sum_{k=1}^n \dfrac{1}{\sharp G_{\e}^k} \sum_{i\in I^k} \sigma^k_i \int_{M_i^k} d\omega_k\\
&=\sum_{k=1}^n\dfrac{1}{\sharp G_{\e}^k}
\sum_{i\in I^k}\sigma^k_i \int_{\partial M_i^k} \omega_k\\
&=\mu_{\omega}^{(\partial S,\theta)}(\partial S).
\end{split}
\end{equation*}
This completes the proof of Stokes' theorem in the branched context.

\subsection{Proofs of Theorem \ref{th4} and Theorem \ref{thmst1}}
Having proved Stokes' theorem in the  ep-groupoids  context, we are ready to prove Theorem \ref{th4} and Theorem \ref{thmst1}. For the convenience we restate these theorems below.
\begin{thnnumber}[{\bf Equivalences}]
Let $F:X\to  Y$ be an equivalence between
ep-groupoids $X$ and $Y$.  Assume that  $\Theta:Y\rightarrow {\mathbb Q}^+$ is  an
oriented $n$-dimensional branched ep-subgroupoid of $Y$  whose orbit
space $S=\abs{\supp \Theta}$ is compact and equipped with the weight
function $\vartheta:S\to \Q^+$ defined by $\vartheta (|y|)=\Theta
(y)$ for $|y|\in S$. Define the $n$-dimensional branched
ep-subgroupoid  on $X$ by $\Theta':=\Theta\circ F:X\to \Q^+$ and denote by $S'$ and $\vartheta'$  the associated orbit space   and the  weight function on $S'$. Moreover, assume that
$\Theta'$ is equipped with the induced orientation. Then, for every
sc-differential $n$-form $\omega$ on $Y_\infty$,
$$
\mu_{\omega}^{(S,\vartheta)}\circ \abs{F} = \mu_{\omega'}^{(S',\vartheta')}
$$
where the $n$-form $\omega'$  is the pull back form $\omega'=F^*\omega$
on $X_\infty$.
Similarly,
$$
\mu_{\tau}^{(\partial S,\vartheta)}\circ \abs{F}=
\mu_{\tau'}^{(\partial S',\vartheta')}
$$
for every $(n-1)$-form $\tau$ on $Y$.
\end{thnnumber}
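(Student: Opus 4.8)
The plan is to reduce the asserted identity of measures to a purely local comparison — using the uniqueness of a locally determined measure established right after Lemma~\ref{lemma3.8a} — and that local comparison will come down to the change-of-variables formula for a top-degree form under an orientation-preserving diffeomorphism of finite-dimensional manifolds. The key structural observation is that an equivalence pulls a local branching structure of $\Theta$ on $Y$ back to a local branching structure of $\Theta'=\Theta\circ F$ on $X$. Concretely, given $x_0\in\supp\Theta'$ I would put $y_0=F(x_0)\in\supp\Theta$ and choose an open $U=U(y_0)\subset Y$ carrying the natural representation of $G_{y_0}$ together with an oriented local branching structure $(M_i)_{i\in I}$ with weights $(\sigma_i)_{i\in I}$ (Definition~\ref{def1}). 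Since $F$ is a local sc-diffeomorphism on objects and induces a bijection $G_{x_0}\to G_{y_0}$ of isotropy groups, after shrinking $G_{x_0}$-invariantly I may assume $F$ restricts to an sc-diffeomorphism $F\colon U'\to U$ with $U'=U(x_0)$, and — by functoriality and the uniqueness clause of Proposition~\ref{prop1.2} — that $F$ intertwines the two natural representations under the isomorphism $G_{x_0}\to G_{y_0}$. Then $M_i':=F^{-1}(M_i)$ are finite-dimensional submanifolds of $X$ of common dimension $n$; since $F$ preserves the degeneracy index they are in good position to $\partial X$; and $(M_i')_{i\in I}$ with the \emph{same} weights is a local branching structure for $\Theta'$ on $U'$, because $\Theta'(x)=\Theta(F(x))=\sum_{\{i\,:\,x\in M_i'\}}\sigma_i$. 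Orienting $T_xM_i'$ so that $T_xF\colon T_xM_i'\to T_{F(x)}M_i$ is orientation-preserving yields precisely the induced orientation on $\Theta'$ and makes each $F|M_i'\colon M_i'\to M_i$ an orientation-preserving diffeomorphism.

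Next I would record that all the numerical and measure-theoretic data match up. The bijection $G_{x_0}\to G_{y_0}$ is a group isomorphism intertwining the natural representations, so it carries the ineffective part of $G_{x_0}$ onto that of $G_{y_0}$; hence $\sharp (G_{x_0})_{\text{e}}=\sharp (G_{y_0})_{\text{e}}$. The orbit map $|F|\colon|X|\to|Y|$ is a homeomorphism restricting to a homeomorphism $S'\to S$, and $\vartheta'=\vartheta\circ|F|$ since $\Theta'=\Theta\circ F$. Moreover, since locally $|F|$ is realized by the diffeomorphisms $F|M_i'$, it sends measure-zero subsets of $S'$ to measure-zero subsets of $S$ and conversely; combined with its being a homeomorphism this shows $|F|$ is an isomorphism $(S',\mathcal L(S'))\to(S,\mathcal L(S))$ of the canonical $\sigma$-algebras. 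In particular $K'\mapsto \mu_\omega^{(S,\vartheta)}(|F|(K'))$ is a well-defined signed finite measure on $(S',\mathcal L(S'))$, which I call $\nu$.

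The local comparison then runs as follows. Fix $x_0$, $U'$, $U$ as above and let $K'\in\mathcal L(S')$ lie in a compact subset of $|M_{U'}|$; put $K=|F|(K')\subset|M_U|$, and let $K_i'\subset M_i'$, $K_i\subset M_i$ be the preimages of $K'$, $K$ under the projections to the orbit spaces, so that $F|M_i'$ maps $K_i'$ bijectively onto $K_i$. Since $\omega'|M_i'=(F|M_i')^\ast(\omega|M_i)$ and $F|M_i'$ is an orientation-preserving diffeomorphism, the change-of-variables formula — applied through a decreasing sequence of open neighbourhoods of $K_i'$ together with Lebesgue's convergence theorem, exactly as in the definition of $\int_{K_i}\omega|M_i$ — gives $\int_{K_i'}\omega'|M_i'=\int_{K_i}\omega|M_i$. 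Using the order identity above,
\[
\mu_{\omega'}^{U'}(K')=\frac{1}{\sharp (G_{x_0})_{\text{e}}}\sum_{i\in I}\sigma_i\int_{K_i'}\omega'|M_i'=\frac{1}{\sharp (G_{y_0})_{\text{e}}}\sum_{i\in I}\sigma_i\int_{K_i}\omega|M_i=\mu_\omega^{U}(K).
\]

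Finally I would globalize. By Lemma~\ref{lemma3.8a}, around each point of $S$ the measure $\mu_\omega^{(S,\vartheta)}$ agrees with a local measure $\mu_\omega^{U}$; pulling such a neighbourhood back by $|F|^{-1}$ and invoking the displayed identity shows that around each point of $S'$ the measure $\nu$ agrees with the corresponding local measure $\mu_{\omega'}^{U'}$ used in building $\mu_{\omega'}^{(S',\vartheta')}$. Since, by the remark following Lemma~\ref{lemma3.8a}, there is at most one measure on $(S',\mathcal L(S'))$ with this local property, $\nu=\mu_{\omega'}^{(S',\vartheta')}$, which is the first assertion. The boundary statement is proved verbatim, using in addition that an equivalence induces equivalences between the local faces of $\partial X$ and $\partial Y$, so that $F|\partial M_i'\colon\partial M_i'\to\partial M_i$ is again an orientation-preserving diffeomorphism, together with the boundary version of the local formula. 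The main obstacle is the first step: producing, from the abstract equivalence $F$, an isotropy-equivariant sc-diffeomorphism $U'\to U$ between natural-representation neighbourhoods that carries branches to branches with matching weights and orientations, and deriving $\sharp (G_{x_0})_{\text{e}}=\sharp (G_{y_0})_{\text{e}}$; once that is in hand, everything else is the change-of-variables formula plus the uniqueness of a locally determined measure.
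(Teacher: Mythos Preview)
Your proposal is correct and follows essentially the same strategy as the paper: reduce to a local statement, pull back the branching structure $(M_i)_{i\in I}$ on $U=U(y_0)$ to $(M_i')_{i\in I}=(F^{-1}(M_i))_{i\in I}$ on $U'=U(x_0)$ via the local sc-diffeomorphism $F|U'$, match the isotropy data, and conclude by change of variables. Your write-up is in fact more careful than the paper's on several points---you explicitly verify that $|F|$ carries $\mathcal L(S')$ to $\mathcal L(S)$, you work with the effective part $\sharp(G_{x_0})_{\text{e}}$ rather than the full order, and you globalize via the uniqueness clause after Lemma~\ref{lemma3.8a} rather than simply asserting that the local computation suffices.
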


\begin{proof}
It suffices to prove that $\mu_{\omega}^{S', \vartheta'}(K')=\mu_{\omega}^{S, \vartheta}\circ \abs{F}(K')$ for sets $K'\in {\mathcal L}(S')$ contained in open sets which are invariant under the natural action and contain local branching structure.
Take  $y\in \supp \Theta$ and let $U=U(y)$ be an open neighborhood of $y$  invariant under the  natural action of the isotropy group $G_y$  and containing  a local branching structure  $(M_i)_{i\in I}$  with the weights $(\sigma_i)_{i\in I}$. Of course, the point  $y$ does not need to be in the set $F(X)$. However, using the fact that $\abs{F}:\abs{X}\to \abs{Y}$ is a homeomorphism, we can find
a point  $x\in X$ and a morphism $\varphi:F(x)\rightarrow y$.   This morphism extends to a local sc-diffeomorphims of the form $t\circ s^{-1}$. Replacing  if necessary $U(y)$ by a smaller open set, taking a suitable open neighborhoods $U'=U(x)$, and composing $F$ with $t\circ s^{-1}$, we may assume that $y=F(x)$ and that  the map $F:U'\to U$ is an sc-diffeomorphism.
Then  $\sharp G_x=\sharp G_y$ and  $\varphi_{F(g)}=F\circ \varphi_{g}\circ F^{-1}$. Hence $U'$ is an $G_x$-invariant neighborhood of $x$.  Putting   $M_i'=F^{-1}(M_i)$, we  see that   $(M_i')_{i\in I}$ together with weights $(\sigma)_{i\in I}$ is a local branching structure in $U'$.  If $K'\in {\mathcal L}(S')$ is  a compact subset of $\abs{\supp \Theta'\cap U'}$, then $K=\abs{F}(K')$  so that $K\in {\mathcal L}(S)$.  Since  $\omega'=F^*\omega$, we find that
\begin{equation*}
\begin{split}
\mu_{\omega}^{(S,  \vartheta )}\circ \abs{F}(K')&=\mu_{\omega}^{(S,  \vartheta )}(K)
=\dfrac{1}{\#G_y}\sum_{i\in I}\sigma_i \int \omega\vert M_i\\
&=\dfrac{1}{\#G_x}\sum_{i\in I}\sigma_i \int \omega'\vert M_i'
=\mu_{\omega'}^{(S', \vartheta )}(K).
\end{split}
\end{equation*}
This completes the proof of Theorem \ref{th4}.
\end{proof}
Next we restate and  prove Theorem \ref{thmst1}.
\begin{thnnumber}
Let $Z$ be a polyfold and $S\subset Z$ be an oriented compact branched suborbifold
defined by the equivalence class $[(X, \beta ,\Theta)]$ and equipped with the weight function $w:S\to \Q^+\cap (0,\infty )$.
For an sc-differential $n$-form $\tau$ on  $Z_\infty$ and $K\in {\mathcal L}(S)$,  define
$$
\int_{(K,w)}\tau:=\int_{\beta^{-1}(K)}
d\mu_\omega^{(\beta^{-1}(S),\vartheta)}=\mu_\omega^{(\beta^{-1}(S),\vartheta)}(\beta^{-1}(K)),
$$
where  the equivalence class $\tau$ is represented by the triple $(X, \beta ,\omega)$ and the weight function $\vartheta$  on
$\beta^{-1}(S)=\abs{\supp \Theta}$ is defined by $\vartheta (|x|)=\Theta (x).$
Then   the integral  $\int_{(K,w)}\tau$ is
well defined. Moreover,   if $\tau$ is an sc-differential $(n-1)$-form on $Z_\infty$,  then
$$
\int_{(\partial S,w)}\tau =\int_{(S,w)} d\tau.
$$
\end{thnnumber}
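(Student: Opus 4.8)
The plan is to reduce the whole statement to the ep-groupoid results already proved, namely Theorem~\ref{th4} (Equivalences), the ep-groupoid Stokes theorem (Theorem~\ref{thmst0}), together with the facts that $d[\omega]=[d\omega]$ and that a representative $\omega$ of $[\omega]$ is an sc-differential form on $X_\infty$ for a polyfold structure $(X,\beta)$ on $Z$. There is essentially no new analysis to do; the content is entirely in tracking the various identifications.

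First I would settle well-definedness of $\int_{(K,w)}\tau$, i.e. independence of the chosen representatives $(X,\beta,\Theta)$ of the suborbifold $S$ and $(X,\beta,\omega)$ of the form $\tau$. So let $(X,\beta,\Theta,\omega)$ and $(X',\beta',\Theta',\omega')$ be two compatible choices. By the definition of equivalence of branched suborbifolds and of sc-differential forms on $Z$, there is a third ep-groupoid $X''$ and equivalences $F\colon X''\to X$, $F'\colon X''\to X'$ with $\beta\circ\abs{F}=\beta'\circ\abs{F'}$, with $\Theta\circ F=\Theta'\circ F'=:\Theta''$ (an oriented branched ep-subgroupoid of $X''$, the orientations being preserved by $F,F'$ in the oriented case), and with $F^\ast\omega=(F')^\ast\omega'$. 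Writing $S''=\abs{\supp\Theta''}$ and denoting by $\vartheta,\vartheta',\vartheta''$ the induced weight functions, Theorem~\ref{th4} applied to $F$ gives $\mu_\omega^{(\abs{\supp\Theta},\vartheta)}\circ\abs{F}=\mu_{F^\ast\omega}^{(S'',\vartheta'')}$, and applied to $F'$ gives $\mu_{\omega'}^{(\abs{\supp\Theta'},\vartheta')}\circ\abs{F'}=\mu_{(F')^\ast\omega'}^{(S'',\vartheta'')}$; since $F^\ast\omega=(F')^\ast\omega'$ these right-hand sides agree. Because $\abs{F}$ and $\abs{F'}$ are sc-homeomorphisms carrying $S''$ onto $\abs{\supp\Theta}$ and $\abs{\supp\Theta'}$ (and carrying the canonical $\sigma$-algebras onto one another, since ``measure zero'' is defined through the local branches, which equivalences identify), and since $\beta\circ\abs{F}=\beta'\circ\abs{F'}$, it follows that $\mu_\omega^{(\beta^{-1}(S),\vartheta)}(\beta^{-1}(K))=\mu_{\omega'}^{((\beta')^{-1}(S),\vartheta')}((\beta')^{-1}(K))$ for every $K\in{\mathcal L}(S)$. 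The identical argument with the boundary measures and an $(n-1)$-form establishes that $\int_{(\partial S,w)}\tau$ is well defined.

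Second, with well-definedness available, I would fix one representative $(X,\beta,\Theta)$ of $S$ and one representative $(X,\beta,\omega)$ of the $(n-1)$-form $\tau$, so that $d\tau$ is represented by $(X,\beta,d\omega)$ because $d[\omega]=[d\omega]$. Unwinding the definitions,
$$\int_{(S,w)}d\tau=\mu_{d\omega}^{(\beta^{-1}(S),\vartheta)}\bigl(\beta^{-1}(S)\bigr)=\mu_{d\omega}^{(\abs{\supp\Theta},\vartheta)}\bigl(\abs{\supp\Theta}\bigr).$$
On the boundary side, since $\partial S=\beta(\abs{\supp\Theta\cap\partial X})$ we have $\beta^{-1}(\partial S)=\abs{\supp\Theta\cap\partial X}$, which is exactly the boundary orbit space entering the boundary measure construction, so
$$\int_{(\partial S,w)}\tau=\mu_{\omega}^{(\abs{\supp\Theta\cap\partial X},\vartheta)}\bigl(\abs{\supp\Theta\cap\partial X}\bigr).$$
The ep-groupoid Stokes theorem (Theorem~\ref{thmst0}), $\mu^{(S,\vartheta)}_{d\omega}(S)=\mu^{(\partial S,\vartheta)}_{\omega}(\partial S)$, applied to the $n$-dimensional oriented branched ep-subgroupoid $\Theta$ with compact orbit space $\abs{\supp\Theta}$, says precisely that the two displayed numbers are equal, which completes the proof.

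The main obstacle is not analytic but organizational: one must be careful that the canonical $\sigma$-algebras ${\mathcal L}(S)$ and ${\mathcal L}(\partial S)$, as well as the orientations, transport consistently along the homeomorphism $\beta$ and along $\abs{F},\abs{F'}$, and that the ``boundary'' $\partial S$ of the suborbifold in $Z$ corresponds under $\beta$ exactly to $\abs{\supp\Theta\cap\partial X}$ rather than to some topological boundary. Once these identifications are in place, the theorem is an immediate corollary of Theorems~\ref{th1}, \ref{th4}, and \ref{thmst0}; all the genuine work (the limit definition of $\int_{K_i}\omega\vert M_i$, independence of branching structure, restrictions, morphism invariance, and the partition-of-unity gluing) has already been carried out in the ep-groupoid setting.
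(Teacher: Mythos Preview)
Your proposal is correct and follows essentially the same route as the paper: well-definedness is obtained by passing to a common refinement $X''$ via equivalences $F,F'$ and invoking Theorem~\ref{th4} together with $F^\ast\omega=(F')^\ast\omega'$, and the Stokes identity is then read off from Theorem~\ref{thmst0} applied to a single representative, using $d[\omega]=[d\omega]$. Your additional remarks about transporting ${\mathcal L}(S)$, orientations, and the identification $\beta^{-1}(\partial S)=\abs{\supp\Theta\cap\partial X}$ are exactly the bookkeeping the paper leaves implicit.
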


\begin{proof}
Let $(X, \beta, \Theta)$ and $(X', \beta', \Theta')$ be two equivalent representatives of the oriented compact branched suborbifold $S\subset Z$.  This means that there exist a third polyfold structure $(X'', \beta'')$ and two quivalences $X\xleftarrow{F} X''\xrightarrow{F'} X'$ satisfying $\beta''=\beta \circ \abs{F}=\beta'\circ \abs{F'}$. Then we have $\Theta'':=\Theta'\circ F'=\Theta \circ F$.  Moreover, we require that the equivalences $F$ and $F'$ preserve orientations.  Now if
$\tau$ is  an sc-dffferential form on the polyfold $Z_\infty$, let $\omega$ and $\omega'$ be sc-differential $n$ forms on $X_\infty$ and $X'_\infty$ respectively, so that $F^*\omega=(F')^*\omega'$. Hence the triples $(X, \beta, \omega)$ and $(X', \beta',\omega')$ belong to the equivalence class $\tau$.

Assume that $K\subset {\mathcal L}(S)$. Then Theorem \ref{th4} applied to the equivalence $F:X''\to X$,  oriented $n$-dimensional branched ep-subgroupoids $\Theta:X\to \Q^+$ and  $\Theta'':X''\to \Q^+$, and
sc-differential forms $\omega$ on $X_\infty$ and $F^*\omega$ on $X''_\infty$  gives
\begin{equation*}
\begin{split}
\mu_{\omega}^{(\beta^{-1}(S), \vartheta)}(\beta^{-1}(K))&=
\mu_{F^*\omega}^{((\beta'')^{-1}(S), \vartheta'')}( \abs{F}^{-1}\circ \beta^{-1}(K))\\
&=
\mu_{F^*\omega}^{((\beta'')^{-1}(S), \vartheta'')}( (\beta'')^{-1}(K)).
\end{split}
\end{equation*}
Similarly,  Theorem \ref{th4} applied to the equivalence $F:X''\to X'$,  oriented $n$-dimensional branched ep-subgroupoids $\Theta':X\to \Q^+$ and  $\Theta'':X''\to \Q^+$, and
sc-differential forms  $\omega'$ on $X_{\infty}$ and $(F')^*\omega'$ on $X_{\infty}''$ gives
$$\mu_{\omega'}^{((\beta')^{-1}(S), \vartheta')}((\beta')^{-1}(K))=\mu_{(F')^*\omega'}^{((\beta'')^{-1}(S), \vartheta'')}( (\beta'')^{-1}(K)).$$
Since $F^*\omega=(F')^*\omega'$,  we obtain
$$\mu_{F^*\omega}^{(\beta'')^{-1}(S), \vartheta'')}( (\beta'')^{-1}(K))=\mu_{(F')^*\omega'}^{(\beta'')^{-1}(S), \vartheta'')}( (\beta'')^{-1}(K))$$
and  conclude that
$$\mu_{\omega}^{(\beta^{-1}(S), \vartheta)}(\beta^{-1}(K))=\mu_{\omega'}^{((\beta')^{-1}(S), \vartheta')}((\beta')^{-1}(K)).$$
Consequently,   the integral $\int_{(K, w)}\tau$ is well defined.

The remaining part of the theorem is a consequence of Theorem \ref{thmst0}.
Choose  a  representative  $(X, \beta , \Theta )$  of the oriented $n$-dimensional  branched  suborbifold $S$  equipped with the weighting function $w:S\to  \Q^+\cap (0,\infty )$. Next we choose an sc-differential $(n-1)$-form  $\omega$  such that  $(X, \beta, \omega )$ is  a  representative of the differential form $\tau$ on $Z_\infty$. Since $d[\omega]=[d\omega]$,  the triple $(X, \beta, d\omega)$ is a representative of the  sc-differential $n$-form $d\tau$ on $Z_\infty$.
Putting $K=S$, we find
\begin{equation*}
\begin{split}
\int_{(\partial S, w)}d\tau&=\mu_{d\omega}(\beta^{-1}(S), \vartheta )(\beta^{-1}(S))\\
&=\mu_{\omega}^{\partial \beta^{-1}(S), \vartheta)}(\partial \beta^{-1}(S))\\
&=
\mu_{\omega}^{ \beta^{-1}(\partial S), \vartheta)}( \beta^{-1}(\partial S))=\int_{(\partial S, w)}\tau.
\end{split}
\end{equation*}
The proof of the theorem is completed.
\end{proof}

\section{Appendix}

\subsection{Finite dimensional submanifolds}\label{fdimsubm}
In \cite{HWZ1}  we  introduced  the concept  of a strong finite dimensional  submanifold of an
M-polyfold. It  carries  the structure of a manifold in a natural way. In \cite{HWZ2} we introduced the more general notion of a finite dimensional submanifold which we recall below.    Submanifolds according to the new definition  have natural manifold structures. Moreover,  strong
finite dimensional submanifolds  are also submanifolds according to the new definition.
The manifold structures induced in both cases are the same.

We recall from  Definition 3.1  in \cite{HWZ1} that the splicing
${\mathcal S}=(\pi, E, V)$  consists of the sc-Banach space $E$, the open subset $V$ of a partial quadrant $C$ in the sc-Banach space $W$,  and the sc-smooth map $\pi:V\oplus E\to E$ has the property that for every $v\in V$ the mapping
$$\pi_v=\pi (v, \cdot ):E\to E$$
is a  bounded linear projection operator. The splicing core $K^{\mathcal S}$  associated with the splicing
${\mathcal S}=(\pi, E, V)$ is the set
$$K^{\mathcal S}=\{(v, e)\in V\oplus E\vert \, \pi_ve=e\}.$$
\begin{defn}\label{sub1}
Let $X$ be an M-polyfold and  let $M\subset X$ be a subset  equipped with the
induced topology. The subset  $M$ is called a  {\bf finite dimensional submanifold} of $X$
provided the following holds.
\begin{itemize}
\item[$\bullet$]  The subset $M$ lies in $X_\infty$.
\item[$\bullet$] At  every point $m\in M$  there exists an  M-polyfold chart
$$
(U, \varphi, (\pi,E,V))
$$
where  $m\in U\subset X$ and $\varphi:U\rightarrow
O$ is a homeomorphism satisfying $\varphi (m)=0$ onto the  open neighborhood $O$ of $0$ in
the splicing core $K^{\mathcal S}$ associated with the splicing  ${\mathcal S}=(\pi, E, V)$
and having the following property.  There
exist  a finite-dimensional smooth linear subspace $N\subset W\oplus
E$ in good position to $C$, a  corresponding sc-complement
$N^\perp$, and an open neighborhood $Q$ of $0\in C\cap N$,  and an
$\ssc$-smooth map $A:Q\rightarrow N^\perp$ satisfying  $A(0)=0$, $DA(0)=0$
so that the map
$$
\Gamma:Q\rightarrow W\oplus E:q\rightarrow q+A(q)
$$
has its image in $O$ and the image of the composition
$\Phi:=\varphi^{-1}\circ\Gamma:Q\rightarrow U$ is equal to  $M\cap U$.
\item[$\bullet$] The map $\Phi:Q\rightarrow M\cap U$ is a homeomorphism.
\end{itemize}
The map $\Phi:Q\rightarrow U$ is called a  {\bf good parametrization}
of a neighborhood of $m$   in $M$.
\end{defn}
In  other words,   a  subset $M$  of an  M-polyfold $X$  consisting of smooth points
is a submanifold if for every $m\in M$ there is  a good
parametrization of an open neighborhood of $m$ in $M$. It has been proved in \cite{HWZ2}  that the transition maps  $\Phi\circ \Psi^{-1}$ defined  by two good
parameterizations $\Phi$ and $\Psi$  are smooth, so the inverses of the good parametrizations define an atlas of smoothly compatible charts.  Consequently, a finite dimensional submanifold is in a natural way a manifold with boundary with corners. By construction the map $\Phi:Q\rightarrow M$ occurring in  Definition \ref{sub1} is sc-smooth.
\subsection{The Lie bracket [A, B]}\label{poly}
In this appendix we shall define the Lie bracket $[A, B]$ of two sc-smooth vector fields on an M-polyfold $X$. A vector field is an sc-smooth section of the tangent bundle $TX\to X^1$. In order to define the Lie derivative,  we go into a local chart $O\subset K^{\mathcal S}$ of $X$. Here  $O$ is an open subset of the splicing core
$K^{\mathcal S}=\{(v, e)\in V\oplus E\vert \, \pi_ve=e\}$ associated with  the splicing
${\mathcal S}=(\pi, E, V)$. The triple consists of the sc-Banach space $E$, the open subset $V$ of a partial quadrant $C$ in the sc-Banach space $W$, and the sc-smooth map $\pi:V\oplus E\to E$ which has the property that for every $v\in V$ the mapping
$$\pi_v=\pi (v, \cdot ):E\to E$$
is a  bounded linear projection operator. The fiber $T_{(v, e)}O$ of the tangent bundle $TO\to O^1$ over the point $(v, e)\in O^1$ is given by
\begin{equation}\label{eqtan}
T_{(v, e)}O=\{(\delta v, \delta e)\in W\oplus E\vert \, T\pi ((v, e), (\delta v, \delta e))=(e, \delta e)\}.
\end{equation}

We recall that the tangent map $T\pi$ of the map $\pi$ at $(v, e)\in O^1$ is defined by
$ T\pi ((v, e), (\delta v, \delta e))=(\pi (v, e), D\pi (v, e)(\delta v, \delta e))\in E^1\oplus E$. Therefore, we deduce from \eqref{eqtan} that $\pi (v, e)=e$ and
\begin{equation*}
\begin{split}
\delta e&=D\pi (v, e)(\delta v, \delta e)\\
&=D_v\pi (v, e)\cdot \delta v+\pi (v, \delta e),
\end{split}
\end{equation*}
if $(\delta v, \delta e)\in T_{(v, e)}O$. Consequently, the principal part of a vector field is in our local coordinates at the points $(v, e)\in O^1$ represented by
$$A(v, e)=(\alpha (v, e), \beta (v, e))\in W_0\oplus E_0$$
where the two maps $\alpha$ and $\beta$ satisfy the identity
\begin{equation}\label{eqtan1}
\beta (v, e)=\pi (v, \beta (v, e))+[D_v\pi (v, e)]\cdot \alpha (v, e).
\end{equation}
Similarly, the second vector field $B$ has at points $(v, e)\in O^1$ the representation
$$A(v, e)=(\gamma (v, e), \delta (v, e))\in W_0\oplus E_0$$
with $\gamma$ and $\delta$ satisfying the equation
\begin{equation}\label{eqtan2}
\delta  (v, e)=\pi (v, \beta (v, e))+[D_v\pi (v, e)]\cdot \gamma  (v, e).
\end{equation}
At a point $(v, e)\in O_2$,  we can differentiate the identity \eqref{eqtan1} in the direction of the vector field $B(v, e)\in W_1\oplus E_1$ and obtain
\begin{equation*}
\begin{split}
D\beta (v, e)\cdot B(v, e)&=\pi (v,  D\beta (v, e)\cdot B(v, e))\\
&\phantom{=}+[D_v\pi (v, e)]\cdot D\alpha (v, e))\cdot B(v, e)\\
&\phantom{=}+[D_v\pi (v, \beta (v, e))]\cdot  \gamma (v, e)\\
&\phantom{=}+[D_v\pi (v,\delta (v, e))]\cdot \alpha (v, e)\\
&\phantom{=}+[D_v^2\pi(v, e)] (\gamma (v, e), \alpha (v, e)).
\end{split}
\end{equation*}
Similarly,  differentiating \eqref{eqtan2} at the point $(v, e)\in O_2$  in the direction of the vector $A(v, e)$,  one finds
\begin{equation*}
\begin{split}
D\delta (v, e)\cdot A(v, e)&=\pi (v,  D\delta(v, e)\cdot A(v, e))\\
&\phantom{=}+[D_v\pi (v, e)]\cdot D\gamma (v, e))\cdot A(v, e)\\
&\phantom{=}+[D_v\pi (v, \delta (v, e))]\cdot  \alpha (v, e)\\
&\phantom{=}+[D_v\pi (v,\beta(v, e))]\cdot \gamma (v, e)\\
&\phantom{=}+[D_v^2\pi(v, e)] (\alpha (v, e), \gamma (v, e)).
\end{split}
\end{equation*}
Taking  the difference  and using the symmetry of the second derivative
$$[D_v^2\pi(v, e)] (\gamma (v, e), \alpha (v, e))
=[D_v^2\pi(v, e)] (\alpha (v, e), \gamma (v, e)),$$
one finds
\begin{equation*}
\begin{split}
D\beta (v, e)\cdot B(v, e)&-D\delta (v, e)\cdot A(v, e)\\
&=\pi (v,  D\beta (v, e)\cdot B(v, e)-D\delta (v, e)\cdot A(v, e))\\
&\phantom{=}+D_v\pi (v, e)\cdot [  D\alpha (v, e)\cdot B(v, e)-D\gamma (v, e)\cdot A(v, e)].
\end{split}
\end{equation*}
This shows that
$$ (D\alpha (v, e)\cdot B(v, e)-D\gamma (v, e)\cdot A(v, e),  D\beta (v, e)\cdot B(v, e)-D\delta (v, e)\cdot A(v, e))$$
belongs to the fiber  $(T_{(v, e)}O)_1$ whenever $(v, e)\in O_2$.

Therefore, we can define the {\bf Lie bracket } of the  vector fields $A$ and $B$ at the point $(v, e)\in O_2$ by
$$[A, B](v, e)=DA(v, e)\cdot B(v, e)-DB(v, e)\cdot A(v, e).$$
More explicitly,
\begin{equation*}
[A, B](v, e)=
(D\alpha \cdot B-D\gamma \cdot A,  D\beta \cdot B-D\delta \cdot A
\end{equation*}
where all the terms are evaluated at $(v, e)\in O_2$.  Observe that if $(v, e)\in O_2$,  then $[A, B](v, e)$ belongs to the  level $1$.  That the definition of $[A, B]$ does not depend on the choice of local coordinates is a consequence of the next  lemma.
The lemma  also implies that for an ep-groupoid $X$ the Lie bracket is compatible with the morphisms.
\begin{lem}\label{proplie2}
Let  $\varphi:X\to X$ be  a local sc-diffeomorphism. If $A$ and $B$ are two sc-smooth vector fields on $X$ satisfying  $T\varphi (A(x))=A(\varphi (x))$ and $T\varphi (B(x))=B(\varphi (x))$ at points in $X_2$, then also
$$T\varphi ([A, B](x))= [A, B](\varphi (x))$$
at $x\in X_2$.
\end{lem}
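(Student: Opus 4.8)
The plan is to reduce the statement to a computation in local sc-coordinates, mirroring the derivation of the formula for $[A, B]$ carried out just above, and then to run the classical argument for naturality of the Lie bracket; the only genuinely new content is the bookkeeping of sc-levels.

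First I would use that the assertion is local and fix sc-charts around $x$ and around $\varphi(x)$, whose images are open subsets of splicing cores. After shrinking domains, $\varphi$ is represented in these charts by an sc-diffeomorphism, which I will still denote by $\varphi$, and the two vector fields by their principal parts, which — as in the computation above — I will continue to denote by $A$ and $B$. Unwinding the description of the tangent fibers in \eqref{eqtan} together with the definition of $T\varphi$, the hypotheses $T\varphi(A(x))=A(\varphi(x))$ and $T\varphi(B(x))=B(\varphi(x))$ become the identities of principal parts
$$D\varphi(x)\,A(x)=A(\varphi(x)),\qquad D\varphi(x)\,B(x)=B(\varphi(x)),$$
initially valid at every base point coming from $X_2$. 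Since for fixed argument each side of the first identity is a continuous (indeed sc-smooth) section over $O^1$ and $X_2$ is dense in $X_1$, the first identity in fact holds on an open subset of $O^1$, hence may be differentiated.

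Next I would differentiate $D\varphi(x)\,A(x)=A(\varphi(x))$ at a point $x\in O_2$ in the direction of the vector $B(x)$ — which, as in the derivation of the bracket formula, lies one sc-level higher, so that every term below is meaningful — and substitute $D\varphi(x)\,B(x)=B(\varphi(x))$ on the right. This yields
$$D^2\varphi(x)(B(x),A(x))+D\varphi(x)\,DA(x)\,B(x)=DA(\varphi(x))\,B(\varphi(x)).$$
Interchanging the roles of $A$ and $B$ gives
$$D^2\varphi(x)(A(x),B(x))+D\varphi(x)\,DB(x)\,A(x)=DB(\varphi(x))\,A(\varphi(x)).$$
Subtracting the two and invoking the symmetry of the second derivative of the sc-smooth map $\varphi$, so that $D^2\varphi(x)(B(x),A(x))=D^2\varphi(x)(A(x),B(x))$, all second-derivative terms cancel, leaving
$$D\varphi(x)\bigl(DA(x)\,B(x)-DB(x)\,A(x)\bigr)=DA(\varphi(x))\,B(\varphi(x))-DB(\varphi(x))\,A(\varphi(x)),$$
i.e. $D\varphi(x)\,[A,B](x)=[A,B](\varphi(x))$ for all $x\in O_2$. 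Translating back through the charts gives exactly $T\varphi([A,B](x))=[A,B](\varphi(x))$ for $x\in X_2$, as claimed.

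The main obstacle — and the only place where this departs from the finite-dimensional argument — is checking that the directional derivatives and applications of the chain rule above are legitimate at the sc-levels involved: that $DA(x)\,B(x)$ and $D^2\varphi(x)(B(x),A(x))$ make sense for $x\in O_2$, and that every term on both sides of the displayed identities lives at the same level. I expect this to come down to exactly the level bookkeeping already performed in the derivation of the formula for $[A,B]$ above, so that it carries over without change. The asserted compatibility of the Lie bracket with morphisms of an ep-groupoid is then just the special case $\varphi=t\circ s^{-1}$, the local sc-diffeomorphism attached to a morphism, applied to vector fields that are themselves compatible with morphisms.
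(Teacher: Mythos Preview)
Your proof is correct and follows essentially the same route as the paper's: pass to local coordinates, differentiate the $\varphi$-relatedness identities $D\varphi\cdot A=A\circ\varphi$ and $D\varphi\cdot B=B\circ\varphi$ in the direction of the other field, subtract, and cancel the $D^2\varphi$ terms by symmetry of the second derivative. Your density argument, extending the relatedness identities from $X_2$ to an open subset of $O^1$ before differentiating, is a useful clarification that the paper leaves implicit.
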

\begin{proof}
Working in local coordinates, we can assume that $x$ varies in an open neighborhood $O$ in the splicing core $K\subset V\oplus E$ and $\varphi (x)$ in $O'$ of the splicing core $K'\subset V'\oplus E'$. Denoting the sc-smooth vector fields in local coordinates again by $A$ and $B$ we have, by assumption,  that $D\varphi \cdot A=A\circ \varphi$ and
$D\varphi \cdot B=B\circ \varphi$. Differentiation of these relations at points on the  level $2$  gives
\begin{align*}
D(A\circ \varphi  )\cdot B&=D^2\varphi\cdot (A, B)+D\varphi \cdot [DA\cdot B]\\
D(B\circ \varphi  )\cdot A&=D^2\varphi\cdot (B, A)+D\varphi \cdot [DB\cdot A].
\end{align*}
Taking the difference, using the symmetry of the second derivative and the chain rule, one computes
\begin{equation*}
\begin{split}
D\varphi \cdot [A, B]&=D\varphi \cdot [DA\cdot B-DB\cdot A]\\
&=D (A\circ \varphi)\cdot B-D (B\circ \varphi)\cdot A\\
&=D A\circ \varphi \cdot [D\varphi \cdot B]-DB\circ \varphi\cdot [D\varphi \cdot A]\\
&=D A\circ \varphi\cdot [B\circ \varphi ]-DB\circ \varphi\cdot [A\circ \varphi ]\\
&=[A, B]\circ \varphi.
\end{split}
\end{equation*}
The proof of  Proposition \ref{proplie2} is complete.
\end{proof}
The next  proposition summarizes  our discussion.
\begin{prop}
If $X$ is an M-polyfold and $A$ and $B$ are sc-smooth vector fields on $X$, then $[A, B]$ is an sc-smooth vector field on $X^1$,that is, $[A, B]$ is a section of $T(X^1)\to X^2$.
\end{prop}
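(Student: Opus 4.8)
The plan is to assemble the proposition from the local computation carried out just before it together with Lemma \ref{proplie2}. There are three points to check: that the locally defined expression $DA(v,e)\cdot B(v,e)-DB(v,e)\cdot A(v,e)$ takes its values in the appropriate fibers, that it does not depend on the choice of M-polyfold chart, and that it is sc-smooth.

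For the first point nothing new is needed. The displayed computation preceding the proposition shows that whenever $(v,e)\in O_2$ the pair $(D\alpha\cdot B-D\gamma\cdot A,\, D\beta\cdot B-D\delta\cdot A)$ satisfies the defining relation \eqref{eqtan} for $T_{(v,e)}O$, and in fact lies in its level-$1$ part $(T_{(v,e)}O)_1$. Since the tangent bundle of the M-polyfold $X^1$ lives over $(X^1)^1=X^2$ and its fiber over a point of $X^2$ is precisely this level-$1$ subspace, this says that $[A,B]$, read in a chart, is a map from $O_2$ into the fiber of $T(X^1)$. For the second point I would invoke Lemma \ref{proplie2}: a transition map between two overlapping M-polyfold charts is a local sc-diffeomorphism, and in the two charts the principal parts of $A$ and of $B$ are related by its tangent map at points of $X_2$; the lemma then gives that the two local expressions for $[A,B]$ are related by the same tangent map, which is exactly the compatibility needed for the locally defined maps to glue to one global section of $T(X^1)\to X^2$. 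The same lemma applied to the natural representations $\varphi_g$ yields, as noted before its statement, that $[A,B]$ is compatible with morphisms when $X$ is an ep-groupoid.

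The only real work is in the third point, and it is a matter of keeping track of levels. The principal parts $a=(\alpha,\beta)$ and $b=(\gamma,\delta)$ of $A$ and $B$ are sc-smooth maps on the local model $O$; by the chain rule and the basic properties of the sc-calculus from \cite{HWZ1}, the differential $Da$ is sc-smooth on the tangent bundle $TO\to O^1$ and drops the level by one, and similarly for $Db$. Evaluating $Da$ along the sc-smooth section $B$, which over a point of $O_2$ takes its value at level $1$, and composing, one finds that $(v,e)\mapsto DA(v,e)\cdot B(v,e)$ is sc-smooth from $O_2$ into the level-$1$ fibers; subtracting the symmetric term $DB\cdot A$ shows that $[A,B]$ is an sc-smooth section of $T(X^1)\to X^2$ in each chart. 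I expect this level count to be the only delicate step; once it is done, combining it with the gluing provided by Lemma \ref{proplie2} completes the proof.
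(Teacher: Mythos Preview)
Your proposal is correct and follows the paper's approach: the proposition is stated there merely as a summary of the preceding discussion, and you have organized exactly that discussion into its three natural pieces (tangency via the displayed identity, chart-independence via Lemma \ref{proplie2}, and sc-smoothness by level-counting). If anything, your treatment of the sc-smoothness point is more explicit than what the paper writes out.
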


\subsection{Sc-Smooth Partitions of Unity}\label{scpartition}
In  this section  we  prove the existence of an sc-smooth partition of unity on an ep-groupoid.  We consider an ep-groupoid  whose sc-structure is  based on separable sc-Hilbert spaces.
We view $[0,1]$ as a category with only the identity morphisms.   An sc-smooth functor $f:X\to [0,1]$ on $X$  is an sc-smooth map on the object M-polyfold  which  is invariant under morphisms, that is, $f(x)=f(y)$ if there exists a morphism $h:x\to y$.

\begin{defn}  Let $X$ be an ep-groupoid and let ${\mathcal U}=(U_{\alpha})_{\alpha \in A}$ be an open cover  of $X$ consisting of saturated sets.  An sc-partition of unity  $(g_\alpha)_{\alpha \in A}$ subordinate to ${\mathcal U}$ consists of  the  locally finite  collection of sc-smooth functors $g_\alpha :X\to [0,1]$ so that $\sum_{\alpha \in A}g_\alpha =1$ and $\supp g_{\alpha}\subset U_{\alpha}$ for every $\alpha \in A$.
\end{defn}
The existence of an sc-smooth partition of unity depends on a sufficient supply of sc-smooth functions.  }  We note the following result proved in \cite{Fathi} for separable Hilbert spaces.

\begin{lem}\label{l1}
Let $U$ and $W$ be  open subsets of a separable Hilbert space $H$  such that $\ov{W}\subset  U$. Then there exists a smooth function $f:H\to [0,1]$ having support contained in $U$ and $f=1$ on $\ov{W}$.
\end{lem}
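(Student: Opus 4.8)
The plan is to reduce the statement to the construction of a single smooth ``bump'' function subordinate to the two-element open cover $\{U,\ H\setminus\ov W\}$ of $H$, built from the one feature that distinguishes a Hilbert space from a general Banach space: the function $q(x)=\langle x,x\rangle=\norm{x}^{2}$ is $C^{\infty}$ — indeed a quadratic polynomial, with $Dq(x)h=2\langle x,h\rangle$, $D^{2}q(x)(h,k)=2\langle h,k\rangle$ and $D^{k}q=0$ for $k\ge 3$. Composing $q$ with an elementary one-variable bump gives smooth bump functions on balls, and the separability of $H$ (hence its Lindel\"of property) lets us splice countably many of them together; no general partition-of-unity theorem or paracompactness argument is needed, because a telescoping-product trick provides local finiteness for free. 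This is in essence the argument of \cite{Fathi}.

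First, by one-variable calculus, for $0<r<R$ there is a smooth $\chi\colon\R\to[0,1]$ with $\chi\equiv 1$ on $(-\infty,r^{2}]$ and $\chi\equiv 0$ on $[R^{2},\infty)$; for $a\in H$ the function $\psi_{a,r,R}(x):=\chi(\norm{x-a}^{2})$ is then $C^{\infty}$, is identically $1$ on the \emph{open} ball $B(a,r)$, and satisfies $\supp\psi_{a,r,R}\subset\ov{B(a,R)}$. Next, since $\ov W\subset U$ and $H\setminus\ov W$ is open, $\{U,\ H\setminus\ov W\}$ covers $H$; for each $x\in U$ choose $r_x>0$ with $\ov{B(x,2r_x)}\subset U$, and for each $x\in H\setminus U$ (so $x\notin\ov W$) choose $r_x>0$ with $\ov{B(x,2r_x)}\subset H\setminus\ov W$, writing $R_x=2r_x$. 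The open balls $\{B(x,r_x)\}_{x\in H}$ cover $H$, so by the Lindel\"of property countably many of them already do; this yields centres $x_n$, radii $r_n<R_n=2r_n$, smooth functions $\psi_n:=\psi_{x_n,r_n,R_n}$ and labels $\tau(n)\in\{+,-\}$ with $\bigcup_n B(x_n,r_n)=H$, with $\psi_n\equiv 1$ on $B(x_n,r_n)$, and with $\ov{B(x_n,R_n)}$ — hence $\supp\psi_n$ — contained in $U$ if $\tau(n)=+$ and in $H\setminus\ov W$ if $\tau(n)=-$.

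Now set $f_n:=\psi_n\prod_{m=1}^{n-1}(1-\psi_m)$, the empty product being $1$. Each $f_n$ is a finite product of smooth $[0,1]$-valued functions, hence $C^{\infty}$ with $0\le f_n\le 1$, and induction gives $\sum_{n=1}^{N}f_n=1-\prod_{m=1}^{N}(1-\psi_m)$. Given $x\in H$, pick $n_0$ with $x\in B(x_{n_0},r_{n_0})$; on this open neighbourhood $\psi_{n_0}\equiv 1$, so $f_n\equiv 0$ there for every $n>n_0$ (each such $f_n$ carries the factor $1-\psi_{n_0}$). Hence $\{f_n\}_n$ and $\{\supp f_n\}_n$ are locally finite, $\sum_n f_n$ is locally a finite sum of smooth functions equal to the constant $1$ (near $x$ the partial sums stabilise at $N=n_0$, where the vanishing factor $1-\psi_{n_0}$ kills the product), and $\bigcup_{\tau(n)=+}\supp f_n$ is a locally finite union of closed sets each inside $U$, hence a closed subset of $U$. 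Put $f:=\sum_{\{n\,:\,\tau(n)=+\}}f_n$. Then $f$ is $C^{\infty}$ as a locally finite sum of smooth functions, $0\le f\le\sum_n f_n=1$, $\supp f\subset\bigcup_{\tau(n)=+}\supp f_n\subset U$, and on $\ov W$ every $f_n$ with $\tau(n)=-$ vanishes (its support lies in $H\setminus\ov W$), so there $f=\sum_n f_n=1$. This is the required function.

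The one genuinely delicate point — the step I expect to be the crux — is turning the countable family of local bumps into an honestly $C^{\infty}$ global object, since a naive sum $\sum_n\psi_n$ need be neither locally finite nor convergent. The telescoping product $f_n=\psi_n\prod_{m<n}(1-\psi_m)$, combined with the facts that each $\psi_n$ equals $1$ on an \emph{open} ball and that these open balls cover $H$, is exactly what forces all but finitely many $f_n$ to vanish near any given point; granting that, everything else — the one-variable bump, the Lindel\"of extraction, and the bookkeeping of supports — is routine.
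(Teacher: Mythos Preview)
Your proof is correct and is essentially the argument of Fathi \cite{Fathi}, which is exactly what the paper does: it gives no proof of this lemma at all and simply cites \cite{Fathi}. So there is nothing to compare beyond noting that you have written out in full the reference the authors invoke.
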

The proof  of  Lemma \ref{l1}  extends  easily to the case when  $U$ and $W$ are  open subsets of  a partial quadrant in  a separable Hilbert space. In the next lemma we extend  Lemma \ref{l1}  to the sc-context.
\begin{lem}\label{l2}
Let $W$ and $U$ be open subsets of a splicing core $K$ such that $\ov{W}\subset U$. Then there exists an sc-smooth function $f: K\to [0,1]$ such that $f$ has its support in $U$ and is equal to $1$ on $W$.
\end{lem}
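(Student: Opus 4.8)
The plan is to reduce the assertion to Lemma~\ref{l1} on the ambient sc-Hilbert space and then to carry the resulting function back onto the splicing core by composing with the $\ssc$-smooth inclusion of $K$. Recall that $K=K^{\mathcal S}$ is a subset of $V\oplus E$, which is an open subset of the partial quadrant $C\oplus E$ in a separable sc-Hilbert space; on the $0$-level it is therefore an open subset of a partial quadrant in a separable Hilbert space. First I would record that $K$ is closed in $V\oplus E$: the map $(v,e)\mapsto \pi_v e-e$ is $\ssc^0$ and hence continuous on the $0$-level, and $K$ is its zero set. Consequently, if we write $W=\wt W\cap K$ and $U=\wt U\cap K$ with $\wt W,\wt U$ open in $V\oplus E$, the hypothesis $\ov W\subset U$ (closure in $K$) is equivalent to the same inclusion with the closure taken in $V\oplus E$, since that closure already lies inside the closed set $K$.

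Next I would use that the $0$-level topology is metrizable, hence normal, to interpolate an open set $O$ of $V\oplus E$ with $\ov W\subset O\subset \ov O\subset \wt U$, and then replace $\wt W$ by $\wt W\cap O$. The new set meets $K$ in exactly $W$ (because $W\subset O$), and now $\ov{\wt W}\subset \wt U$. Applying Lemma~\ref{l1}, in the form adapted to partial quadrants in separable Hilbert spaces mentioned above, to the pair $\wt W\subset\wt U$ yields a smooth function $g$ on the $0$-level, with values in $[0,1]$, whose support is contained in $\wt U$ and which is identically $1$ on $\ov{\wt W}$, in particular on $W$.

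It then remains to transport $g$ to $K$. Since $g$ is classically smooth on the $0$-level, its restriction to each level is smooth, so $g$ defines an $\ssc$-smooth function on $V\oplus E$ (cf.\ \cite{HWZ1}). The composition $g\circ r$ with the retraction $r(v,e)=(v,\pi_v e)$, which is $\ssc$-smooth because $\pi$ is, is then an $\ssc$-smooth function on $V\oplus E$ extending $g|_K$; by the definition of $\ssc$-smoothness on a splicing core this means that $f:=g|_K\colon K\to[0,1]$ is $\ssc$-smooth. Its support in $K$ lies in $(\supp g)\cap K\subset \wt U\cap K=U$, and $f\equiv 1$ on $W$, as required.

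The one step that requires genuine care is the extension/shrinking in the ambient space: one must arrange $\wt W,\wt U$ so that they intersect $K$ in precisely $W$ and $U$ while still satisfying $\ov{\wt W}\subset\wt U$, which is exactly where closedness of $K$ in $V\oplus E$ and normality of the level-$0$ topology enter. The remaining ingredients—that a function which is smooth in the classical sense on the $0$-level is $\ssc$-smooth, and that the retraction defining a splicing core is $\ssc$-smooth—are standard facts from \cite{HWZ1}.
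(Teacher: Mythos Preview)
Your argument is correct and reaches the same conclusion as the paper, but the paper takes a slightly cleaner route that avoids your normality/shrinking step entirely. Instead of extending $W$ and $U$ to arbitrary open sets $\wt W,\wt U$ in $V\oplus E$ and then arranging $\ov{\wt W}\subset\wt U$ by interpolation, the paper \emph{pulls back} the given sets through the retraction: it sets $W'=\Phi^{-1}(W)$ and $U'=\Phi^{-1}(U)$ for $\Phi(v,e)=(v,\pi_v e)$. Continuity of $\Phi$ on level $0$ then gives $\ov{W'}\subset\Phi^{-1}(\ov W)\subset\Phi^{-1}(U)=U'$ for free, so Lemma~\ref{l1} applies immediately to the pair $(W',U')$. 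After that the two proofs converge: both produce a classically smooth $f_0$ on the ambient space and use sc-smoothness of the splicing map to conclude that $f_0|_K$ is sc-smooth. Your approach has the minor virtue of making explicit why closedness of $K$ in $V\oplus E$ matters for the support statement, while the paper's approach buys economy by letting the retraction do the work of producing compatible ambient open sets.
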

\begin{proof}
Assume that  $K=K^{{\mathcal S}}=\{(v, e)\in V\oplus E\vert \, \pi_v (e)=e\}$ is the splicing core associated with the splicing ${\mathcal S}=(\pi, E, V)$.  Here $V$ is an open subset of a partial cone $C$ in a separable sc-Hilbert space $Z$,  $E$ is a separable sc-Hilbert space, and $\pi:V\oplus E\to E$ is an sc-smooth map such that $\pi (v, \cdot ):=\pi_v:E\to E$ is a bounded  linear projection for every $v\in V$.  Consider  $\Phi:W\oplus E\to W\oplus E$ defined by $\Phi (v, e)=(v, \pi (v, e))$. The map
$\Phi$ is sc-smooth and, in particular, continuous from level $0$ to level $0$ of $V\oplus E$.
Moreover, $\Phi (V\oplus E)=K$. Put  $W'=\Phi^{-1}(W)$ and $U'=\Phi^{-1}(U)$.
Then $W'$ and $U'$ are open and since  $\Phi^{-1}(\ov{W})$ is closed,  we get
$\ov{W'}=\ov{\Phi^{-1}(W)}\subset \Phi^{-1}(\ov{W})\subset U'.$
By Lemma \ref{l1}, there exists a smooth function $f_0:V\oplus E\to [0,1]$  such that $\supp f_0\subset U'$ and $f=1$ on $W'$.  Since $f_0$ is smooth and  the map $\pi$ is sc-smooth, the composition $f_0\circ \pi$ is sc-smooth. Hence,  putting
 $f:=f_0\vert K$,  we obtain an sc-smooth function  defined on $K$  having support in $U$ and equal to $1$ on $W$.
\end{proof}

We will make use several times of the  following lemma from \cite{HWZ3}.

\begin{lem}\label{lem7.9.3}
Let $X$ be an ep-groupoid. For every point $x_0\in X$, there exists an open
neighborhood $U=U(x_0)\subset X$  having the following properties.
\begin{itemize}
\item[$\bullet$]  The natural representation of the isotropy group $G_{x_0}$ acts  by sc-diffeomorphisms on $ U$.
\item[$\bullet$] If $y\in X$ and there are no morphisms between $y$ and objects in
$\ov{U}$, then there exists an open neighborhood $V$ of $y$ so
that for every $z\in V$ there is no morphism to an object  in $\ov{U}$.
\item[$\bullet$] If $y\in X$ and  there are no morphisms  between $y$ and objects in
$U$, but there is a morphism to some element in $\partial U$, then given
an open neighborhood $W$ of $\partial U$ (the set theoretic boundary of $U$),
there exists an open neighborhood $V$ of $y$ so that if there is a morphism
$y'\to x'$ for some $y'\in V$ and $x'\in U$, then $x'\in W$.
\end{itemize}
The open set  $U$ can be taken as small as we wish.
\end{lem}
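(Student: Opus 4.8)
The plan is to construct $U$ from the local structure theorem for ep-groupoids (Proposition~\ref{prop1.2}) together with the properness axiom, and then to deduce both non-trivial bullets from a single closedness statement about saturations. All spaces involved are metrizable, so I will use sequences freely.

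First I would fix, by Proposition~\ref{prop1.2}, an open neighborhood $U_0$ of $x_0$ on which $G_{x_0}$ acts by its natural representation $\varphi:G_{x_0}\to\text{Diff}_{\ssc}(U_0)$, and, by the properness condition, an open neighborhood $V_0=V(x_0)$, which after shrinking we may assume satisfies $V_0\subset U_0$. Note that every open subset of $V_0$ again has the properness property, since $s^{-1}$ of a smaller closed set is closed in $s^{-1}(\overline{V_0})$ and the restriction of a proper map to a closed subset is proper. Given any prescribed neighborhood of $x_0$, intersect $V_0$ with it, choose an open $U'$ with $x_0\in U'$ and $\overline{U'}\subset V_0$, and set
$$
U=\bigcap_{g\in G_{x_0}}\varphi_g(U').
$$
Since $G_{x_0}$ is finite and $\varphi$ is a homomorphism with $\varphi_g(x_0)=x_0$, the set $U$ is an open $G_{x_0}$-invariant neighborhood of $x_0$ contained in the prescribed neighborhood and with $\overline{U}\subset\overline{U'}\subset V_0$. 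This yields the first bullet and the closing remark of the lemma.

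The key step is the claim: \emph{for every closed set $C\subset\overline{U}$, the set $t(s^{-1}(C))$ --- which consists of all objects lying in the same orbit as some object of $C$ --- is closed in $X$.} To see this, suppose $y_n\in t(s^{-1}(C))$ with $y_n\to y$, and pick morphisms $g_n$ with $s(g_n)\in C$ and $t(g_n)=y_n$. Since $C\subset\overline{U}\subset V_0\subset\overline{V_0}$, all $g_n$ lie in $s^{-1}(\overline{V_0})$, and they lie in the $t$-preimage (within $s^{-1}(\overline{V_0})$) of the compact set $\{y\}\cup\{y_n\mid n\in\N\}$, which is compact by properness. A convergent subsequence $g_{n_k}\to g$ then satisfies $s(g)\in C$ (as $C$ is closed) and $t(g)=y$, so $y\in t(s^{-1}(C))$.

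Granting the claim, the second bullet follows by taking $C=\overline{U}$: if $y$ admits no morphism to $\overline{U}$ then $y$ lies in the open complement of $t(s^{-1}(\overline{U}))$, and any neighborhood $V$ of $y$ inside that complement works. For the third bullet, given an open $W\supset\partial U$, put $C=\overline{U}\setminus W$; this is closed, and since $\partial U=\overline{U}\setminus U\subset W$ we have $C\subset U$. By hypothesis $y$ has no morphism to any object of $U$, hence none to any object of $C$, so $y\notin t(s^{-1}(C))$; let $V$ be a neighborhood of $y$ inside the open complement of $t(s^{-1}(C))$. If $y'\in V$ and there is a morphism $y'\to x'$ with $x'\in U$, then $x'\notin C$ (else $y'\in t(s^{-1}(C))$), and as $x'\in U\subset\overline{U}$ this forces $x'\in W$. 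The main obstacle is the closedness claim, where one must keep the auxiliary morphisms inside $s^{-1}(\overline{V_0})$, the set on which properness is available --- this is precisely why arranging $\overline{U}\subset V_0$ in the construction of $U$ is important; the remainder is bookkeeping with the finite group action and elementary point-set topology.
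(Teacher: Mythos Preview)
The paper does not actually prove this lemma; it is quoted from \cite{HWZ3} and used as a black box in the construction of sc-smooth partitions of unity, so there is no in-paper argument to compare against.

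Your proof is correct and is essentially the expected one. The heart of the matter is your closedness claim for $t(s^{-1}(C))$ with $C\subset\overline{U}$ closed, which you extract cleanly from the properness of $t|_{s^{-1}(\overline{V_0})}$ via a sequential compactness argument; metrizability of $X$ is used elsewhere in the paper, so invoking sequences is legitimate. Once that is in hand, both nontrivial bullets follow immediately by taking $C=\overline{U}$ and $C=\overline{U}\setminus W$ respectively, and your bookkeeping for the third bullet (that $C\subset U$ because $\partial U\subset W$, and that a morphism $y'\to x'$ with $x'\in C$ would force $y'\in t(s^{-1}(C))$) is accurate. The one point that genuinely matters, and which you flag yourself, is arranging $\overline{U}\subset V_0$ so that all auxiliary morphisms stay inside the set on which properness is available; you handle this correctly in the construction of $U$.
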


 If $g:X\to [0,1]$ is an sc-smooth function on the ep-groupoid $X$, we denote by $\abs{g}$  the continuous function defined on the orbit space $\abs{X}$ by $\abs{g}( \abs{ x}):=g(x)$.  Now we come to the statement of the main theorem of this section.

\begin{thm} [{\bf  sc-smooth partition of unity}] \label{scpounity}
Let $X$ be an  ep-groupoid  and let ${\mathcal O}=(O_{\alpha})_{\alpha \in A}$ be an  open cover of the orbit space  $\abs{X}$. Then there exists an sc-smooth partition of unity $(g_{\alpha})_{\alpha\in A}$ on  $X$ so that the associated continuous  partition of unity $(\abs{g_{\alpha}})_{\alpha\in A}$ of $\abs{X}$ is subordinate to ${\mathcal O}=(O_{\alpha})_{\alpha \in A}$.
\end{thm}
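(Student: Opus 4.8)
The plan is to carry out the classical three-step construction of a partition of unity, but with sc-smooth \emph{functors} in place of smooth functions. First, near each object I build a $G_x$-invariant sc-smooth bump function, using Lemma \ref{lem7.9.3} to locate a suitable neighborhood and Lemma \ref{l2} to produce the function. Second, I promote each such local bump to a globally defined sc-smooth functor on $X$ by spreading it along morphisms; this is the one step that genuinely uses the ep-groupoid structure, and it is the main obstacle. Third, I pass to a locally finite refinement of the resulting open cover of $\abs{X}$, normalize, and regroup the pieces according to the index set $A$.

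\emph{Step 1: local invariant bumps.} Fix $x\in X$. By Lemma \ref{lem7.9.3} there is an arbitrarily small $G_x$-invariant open neighborhood $U=U(x)$ of $x$ on which $G_x$ acts by its natural representation $\varphi:G_x\to\dif(U)$ and which has the two morphism-separation properties listed there; shrinking $U$ I also arrange $U\subset U_0$ for a neighborhood $U_0$ of $x$ as in Proposition \ref{prop1.2}, and $\abs U\subset O_{\alpha(x)}$ for some index $\alpha(x)\in A$. Using an M-polyfold chart at $x$ together with Lemma \ref{l2}, choose an sc-smooth $h:U\to[0,1]$ whose support is a compact subset of $U$ and which equals $1$ near $x$. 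Its $G_x$-average
$$\widetilde h:=\frac{1}{\sharp G_x}\sum_{g\in G_x}h\circ\varphi_g$$
is sc-smooth, has support in the compact set $\bigcup_{g\in G_x}\varphi_g(\supp h)\subset U$, satisfies $\widetilde h\circ\varphi_g=\widetilde h$ for all $g\in G_x$, and equals $1$ on an open neighborhood $W(x)$ of $x$ (each $\varphi_g$ fixes $x$).

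\emph{Step 2: globalization.} I claim $\widetilde h$ extends to an sc-smooth functor $\widehat h:X\to[0,1]$. For $y\in X$ with $\abs y\in\abs U$ pick any $z\in U$ with $z\sim y$ and set $\widehat h(y):=\widetilde h(z)$; if $z,z'\in U$ both satisfy $z,z'\sim y$, then $z'=\varphi_g(z)$ for some $g\in G_x$ by Proposition \ref{prop1.2}, whence $\widetilde h(z)=\widetilde h(z')$, so $\widehat h$ is well defined, and for all remaining $y$ I put $\widehat h(y):=0$. By construction $\widehat h$ is a functor. For sc-smoothness near a given $y_0\in X$: if $\abs{y_0}\in\abs U$, extend a morphism $z_0\to y_0$ with $z_0\in U$ to a local sc-diffeomorphism $\Psi=t\circ s^{-1}$ with $\Psi(z_0)=y_0$; then $\widehat h=\widetilde h\circ\Psi^{-1}$ near $y_0$, which is sc-smooth. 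If $y_0$ admits no morphism to $\ov U$, the second bullet of Lemma \ref{lem7.9.3} gives a neighborhood of $y_0$ with no morphism to $\ov U$, on which $\widehat h\equiv 0$. Finally, if $y_0$ admits a morphism to $\partial U$ but none to $U$, apply the third bullet of Lemma \ref{lem7.9.3} with the auxiliary neighborhood of $\partial U$ chosen disjoint from the compact set $\supp\widetilde h$ (possible since $\supp\widetilde h\subset U$ is compact): this yields a neighborhood $V$ of $y_0$ such that every object of $U$ related by a morphism to a point of $V$ lies in that auxiliary neighborhood, hence outside $\supp\widetilde h$; therefore $\widehat h\equiv 0$ on $V$. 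Thus $\widehat h$ is sc-smooth; moreover $\abs{\widehat h}=1$ on $\abs{W(x)}$ and $\supp\abs{\widehat h}\subset\abs{\ov U}\subset O_{\alpha(x)}$. This globalization is the crux: well-definedness is exactly what Proposition \ref{prop1.2} provides, and sc-smoothness at orbits which are limits of orbits meeting $U$ without themselves meeting $U$ is exactly what the second and third bullets of Lemma \ref{lem7.9.3} are designed to control.

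\emph{Step 3: assembly.} The sets $\abs{W(x)}$, $x\in X$, cover the orbit space $\abs X$, which is paracompact (see \cite{HWZ3}). Choose a locally finite open refinement and, by the standard shrinking lemma, extract objects $x_j$ ($j\in J$) together with open sets $W_j\subset\abs{W(x_j)}$ still covering $\abs X$, with the family $(\abs{\ov{U(x_j)}})_{j\in J}$ locally finite and each $\abs{\ov{U(x_j)}}$ contained in a single member of the cover $(O_\alpha)_{\alpha\in A}$ — all achievable by choosing the $U(x_j)$ small enough at the outset. Let $\widehat h_j$ be the functor from Steps 1--2 associated with $x_j$ (arranging, by choosing the initial bump supported over the refinement element, that $\supp\abs{\widehat h_j}\subset\abs{\ov{U(x_j)}}$ and $\abs{\widehat h_j}=1$ on $W_j$). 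Then $H:=\sum_{j\in J}\widehat h_j$ is a locally finite sum of nonnegative sc-smooth functors with $H\ge 1$ everywhere, so each $g_j':=\widehat h_j/H$ is an sc-smooth functor and $\sum_{j\in J}g_j'=1$. Putting $g_\alpha:=\sum_{\{j\,:\,\alpha(j)=\alpha\}}g_j'$, local finiteness gives $\supp\abs{g_\alpha}\subset\bigcup_{\alpha(j)=\alpha}\abs{\ov{U(x_j)}}\subset O_\alpha$ and $\sum_{\alpha\in A}g_\alpha=1$, which is the asserted sc-smooth partition of unity.
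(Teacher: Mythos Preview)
Your argument is essentially the paper's proof: Steps~1--2 reproduce Lemma~\ref{lemmascp1} (local $G_x$-invariant bump plus globalization via morphisms, with the three cases handled exactly as there), and Step~3 plays the role of Lemma~\ref{invarinatcover} together with the final normalization.

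The only soft spot is in Step~3. You assert that one can arrange the family $(\abs{\ov{U(x_j)}})_{j\in J}$ to be locally finite ``by choosing the $U(x_j)$ small enough at the outset,'' but smallness alone does not yield this: the $U(x_j)$ are selected before you know the refinement, and once you pick $x_j$ so that the refinement element $W_j$ sits inside $\abs{W(x_j)}$, you cannot afterwards shrink $U(x_j)$ below $W_j$ without losing $\abs{\widehat h_j}=1$ on $W_j$. What actually makes Step~3 go through is your parenthetical remark: re-run Step~1 at each $x_j$ with the bump chosen so that $\widetilde h=1$ on $\pi^{-1}(\ov{W_j})\cap U(x_j)$ and $\supp\widetilde h$ lies inside $\pi^{-1}(R_j)\cap U(x_j)$, where $(R_j)$ is the locally finite refinement and $(W_j)$ its shrinking. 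Then $\supp\abs{\widehat h_j}\subset\ov{R_j}$, and since the closures of a locally finite open family in a paracompact Hausdorff space are again locally finite, the sum $H=\sum_j\widehat h_j$ is locally finite on $X$. One must also check that this modified $\widetilde h$ still has support bounded away from $\partial U(x_j)$ so that your case~(c) argument survives; this holds because $\ov{R_j}\subset\abs{W(x_j)}$ forces $\pi^{-1}(\ov{R_j})\cap U(x_j)$ into the $G_{x_j}$-invariant compact set $\ov{W(x_j)}\subset U(x_j)$. The paper avoids this back-and-forth by first producing, in Lemma~\ref{invarinatcover}, locally finite $G$-invariant covers $(W_j)$, $(U_j)$ with $\ov{W_j}\subset U_j$ and $(\pi^{-1}(\pi(U_j)))$ locally finite, and only then invoking Lemma~\ref{lemmascp1}; the proof of that local finiteness uses the properness of the ep-groupoid in an essential way.
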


The proof of Theorem \ref{scpounity}  follows from the next  two lemmata.

\begin{lem}\label{invarinatcover}
Let ${\mathcal O}=(O_{\alpha})_{\alpha\in A}$ be an open cover of $\abs{X}$. Then  there exist locally finite open covers $(W_{j})_{j\in J}$ and $(U_{j})_{\beta \in J}$  subordinate to ${\mathcal O}$ and such that $\ov{W_j}\subset U_{j}$. The sets
$W_{j}$ and $U_{j}$ are invariant with respect to  the natural group action on
$U_{j}$, and the open cover $(\pi^{-1}(\pi (U_{j}))_{j\in J}$ is locally finite.
\end{lem}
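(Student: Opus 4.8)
The plan is to do the combinatorics downstairs, in the orbit space $\abs{X}$, and then pull everything back along the quotient projection $\pi:X\to\abs{X}$, using that $\pi$ is continuous and open (openness because the source and target maps of $X$ are local sc-diffeomorphisms) and that $\abs{X}$ is second countable and metrizable, hence paracompact. First I would produce an invariant refinement of $\mathcal O$: for each $p\in\abs{X}$ pick $x_p\in X$ with $\abs{x_p}=p$ and, by Lemma \ref{lem7.9.3} (applied and then shrunk), nested $G_{x_p}$-invariant open sets $V(x_p)\subset\overline{V(x_p)}\subset U(x_p)\subset X$ on which $G_{x_p}$ acts by its natural representation, chosen — by the last sentence of that lemma — small enough that $\abs{U(x_p)}$ lies in some member of $\mathcal O$; one may retain the two ``no-escape'' properties of Lemma \ref{lem7.9.3} as well. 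Then $(\abs{V(x_p)})_{p\in\abs{X}}$ is an open cover of $\abs{X}$ refining $\mathcal O$. By paracompactness it has a locally finite open refinement, which is automatically countable because $\abs{X}$ is Lindel\"of; relabelling and shrinking once more with the shrinking lemma, I obtain a countable index set $J$, points $(y_j)_{j\in J}$ with nested invariant neighborhoods $V(y_j)\subset\overline{V(y_j)}\subset U(y_j)$ as above, and a locally finite open cover $(\Omega_j)_{j\in J}$ of $\abs{X}$ with $\overline{\Omega_j}\subset\abs{V(y_j)}$ for each $j$.

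Now I would set
$$U_j:=\pi^{-1}(\Omega_j)\cap U(y_j),\qquad j\in J,$$
and similarly, after applying the shrinking lemma to the locally finite cover $(\Omega_j)_{j\in J}$ to get an open cover $(\Omega_j')_{j\in J}$ with $\overline{\Omega_j'}\subset\Omega_j$,
$$W_j:=\pi^{-1}(\Omega_j')\cap V(y_j),\qquad j\in J.$$
Since $\pi^{-1}(\Omega_j)$ and $\pi^{-1}(\Omega_j')$ are saturated, $U_j$ and $W_j$ are open and invariant under the natural $G_{y_j}$-action (restricted from $U(y_j)$). Because $\Omega_j\subset\abs{V(y_j)}\subset\pi(U(y_j))$ every orbit in $\Omega_j$ is represented in $U(y_j)$, so $\abs{U_j}=\Omega_j$, and likewise $\abs{W_j}=\Omega_j'$; hence $(\abs{U_j})$ and $(\abs{W_j})$ are locally finite open covers of $\abs{X}$, subordinate to $\mathcal O$. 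The inclusion $\overline{W_j}\subset U_j$ follows from $\overline{W_j}\subset\pi^{-1}(\overline{\Omega_j'})\cap\overline{V(y_j)}\subset\pi^{-1}(\Omega_j)\cap U(y_j)=U_j$, using $\overline{\Omega_j'}\subset\Omega_j$, $\overline{V(y_j)}\subset U(y_j)$, and continuity of $\pi$. Finally, since $\pi(U_j)=\Omega_j$ we have $\pi^{-1}(\pi(U_j))=\pi^{-1}(\Omega_j)$, which is open and saturated; given $z\in X$, choose an open $N\ni\abs{z}$ in $\abs{X}$ meeting only finitely many $\Omega_j$, and then $\pi^{-1}(N)$ is a neighborhood of $z$ with $\pi^{-1}(N)\cap\pi^{-1}(\pi(U_j))=\pi^{-1}(N\cap\Omega_j)=\emptyset$ for all but finitely many $j$. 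Thus $(\pi^{-1}(\pi(U_j)))_{j\in J}$ is a locally finite open cover of $X$, which finishes the argument.

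The main point to watch, and the only nontrivial one, is the local finiteness of the saturations $\pi^{-1}(\pi(U_j))$: a priori, passing from an invariant but non-saturated open set to its saturation spreads it across whole orbits, and through the action of infinitely many (finite) isotropy groups such saturations could conceivably accumulate near a point of $X$. The construction above circumvents this by building each $U_j$ as the part of a Lemma \ref{lem7.9.3} neighborhood lying over a member $\Omega_j$ of a \emph{locally finite} cover of $\abs{X}$, so that $\pi(U_j)=\Omega_j$ exactly and local finiteness upstairs is inherited from $\abs{X}$ by continuity of $\pi$; an alternative route, which keeps the $U(y_j)$ intact, would control the saturations directly via the second and third bullets of Lemma \ref{lem7.9.3}, which pin down where morphisms out of a neighborhood of a point can land. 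Everything else — paracompactness and the Lindel\"of property of $\abs{X}$, the shrinking lemma for locally finite covers of a normal space, and the nested-neighborhood bookkeeping — is routine.
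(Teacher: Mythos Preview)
Your strategy---pushing all the combinatorics down to $\abs{X}$, taking a locally finite cover $(\Omega_j)$ there, and then defining $U_j$ as a slice $\pi^{-1}(\Omega_j)\cap U(y_j)$ so that $\pi(U_j)=\Omega_j$ exactly---is genuinely different from the paper's and has a real advantage: local finiteness of the saturations $\pi^{-1}(\pi(U_j))=\pi^{-1}(\Omega_j)$ becomes a one-line consequence of continuity of $\pi$, whereas in the paper this is the nontrivial step. The paper instead works upstairs throughout: it first refines $(\pi^{-1}(Q_\alpha))$ to a locally finite cover of $X$ by small $G$-invariant sets $U_j\subset V_{x(j)}$, and then argues directly (using that each $V_{x(j)}$ was chosen to meet only finitely many $\pi^{-1}(Q_\alpha)$ and to have no morphisms to the others) that the saturations $(\pi^{-1}(\pi(U_j)))$ are locally finite.

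There is, however, a gap in your construction: your families $(U_j)$ and $(W_j)$ need not cover $X$. Given $x\in X$ you only know that $\abs{x}\in\Omega_j$ for some $j$, hence $x\in\pi^{-1}(\Omega_j)$; but there is no reason $x$ should lie in the particular slice $U(y_j)$---the point $y_j$ with $\abs{y_j}$ near $\abs{x}$ may have been chosen on an entirely different ``sheet'' of the orbit. So you have only shown that $(\abs{U_j})$ and $(\abs{W_j})$ cover $\abs{X}$, not that $(U_j)$ and $(W_j)$ cover $X$, which is what the lemma asserts (and what the paper proves and then uses in Theorem~\ref{scpounity}: ``every point of $X$ is in some $W_j$''). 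Your saturated family $(\pi^{-1}(\Omega_j))$ does cover $X$, but those sets are not of the required form (small neighborhoods carrying a natural isotropy action). The ``alternative route'' you sketch at the end---controlling saturations via the second and third bullets of Lemma~\ref{lem7.9.3} while keeping honest covers of $X$---is essentially what the paper does, and is what is needed to close this gap.
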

\begin{proof}
In view of the paracompactness of $\abs{X}$,  there is  a locally finite refinement $(Q_{\alpha})_{\alpha\in A}$  of the cover $(O_{\alpha})_{\alpha \in A}$.  Then $(\pi^{-1}(Q_{\alpha}))_{\alpha \in A}$ is a locally finite refinement of $(\pi^{-1}(O_{\alpha}))_{\alpha \in A}$. For every point $x\in X$,  we choose an open neighborhood $V_x$ intersecting only a finite number of sets $\pi^{-1}(Q_{\alpha})$.  We replace  $V_x$ by its  intersection  with  those  $\pi^{-1}(Q_{\alpha})$  which contains  the point $x$. Observe that  there is no morphism between  the point $x$ and the sets $\pi^{-1}(Q_{\alpha})$ which don't intersect $V_x$.  Hence, shrinking $V_x$ further,  we may assume that $V_x$ has the  properties  listed in Lemma \ref{lem7.9.3} and that there are no morphisms  between points in $V_x$ and points in the sets $\pi^{-1}(Q_{\alpha})$  not intersecting  $V_x$.  The collection $(V_x)_{x\in X}$ is an open cover of $X$ and since $X$ is paracompact, there exists a locally finite refinement  $(U'_{j})_{j\in J}$ of $(V_x)_{x\in X}$. For every  $j\in J$, choose  a  point $x(j)$ such that $U'_{j}\subset V_{x(j)}$.  We abbreviate by $G_{j}$ the isotropy group $G_{x(j)}$  acting on $V_{x(j)}$ by its natural representation.
We claim that $(\pi^{-1}(\pi (U_j')))_{j\in J}$ is a locally finite cover of $X$. Indeed, take $y\in X$. Then $y\in U_k'\subset V_{x(k)}$ for some $k\in J$. Since $(U_j')_{j\in J}$ is locally finite, there exists an open neighborhood $W_y$ of $y$ contained in $U'_k$ and intersecting only a finite number of the sets $U_j'$, say $U'_{j_1}, \ldots ,U'_{j_N}$. Hence $k=j_i$ for some $1\leq i\leq N$. Replacing $W_y$  by a smaller set,  we may assume that $W_y$ is $G_{k}$-invariant.  Assume that $z\in W_y\cap \pi^{-1}(\pi (U'_j))$ for some $j\neq j_1,\ldots , j_N$.  Then there is a morphism between some point $v\in V_j$ and $z=\varphi_g (v)$ for some $g\in G_j$. In view of the definition of $V_{x(k)}$,  we have $v\in V_{x(k)}$.
Hence there is $h\in G_{k}$ such that $v=\varphi_h (z)$, and since $z\in W_y$ and $W_y$ is $G_j$-invariant $v\in W_y$. Consequently, $W_y\cap U'_j\neq \emptyset$ and it follows that $W_y$ intersects only the sets $\pi^{-1}(\pi (U'_{j_1})), \ldots ,\pi^{-1}(\pi (U'_{j_N}))$.
For every $j\in J$, set  $U_j=\bigcup_{g\in G_j}\varphi_g (U'_j)$. Then $\pi^{-1}(\pi (U_j))=\pi^{-1}(\pi (U_j'))$ and since the isotropy groups $G_j$ are finite, it follows that $(U_j)_{j\in J}$ is a locally finite cover of $X$ such that $U_j\subset V_{x(j)}$.  Using paracompactness of $X$ again, we find a locally finite cover $(W'_{j})_{j\in J}$ such that $\ov{W_j'}\subset U_{j}$.
Define  $W_{j}=\bigcup_{g\in G_{j}}\varphi_g (W_j')$. Then  $W_{j}$ is  a $G_{j}$-invariant  open subset of $V_{j}$ such that  $\ov{W_j}\subset U_j$, and   the open cover  $(W_j )_{j\in J}$ is  locally finite.  This completes the  proof of the lemma..
\end{proof}

\begin{lem}\label{lemmascp1}
Let  $U=U(x_0)\subset X$ be an open neighborhood of $x_0$ with   the properties as listed in  Lemma \ref{lem7.9.3} and let  $\varphi:U\to K^{\mathcal S}$ be  a coordinate chart onto an open subset of the splicing core $K^{\mathcal S}$. Assume that  $W$ is a $G_{x_0}$-invariant open subset of $U$ such that $\ov{W}\subset U$. Then there exists an sc-functor $f:X\to [0,1]$ satisfying $f=1$ on $\ov{W}$ and  $\supp f\subset \pi^{-1}(\pi (U)).$
\end{lem}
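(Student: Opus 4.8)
The plan is to first manufacture a $G_{x_0}$-invariant sc-smooth bump function supported in $U$, and then to propagate it over the saturation $\pi^{-1}(\pi(U))$ by transporting it along morphisms; what makes this transport unambiguous is that on a set $U$ chosen as in Lemma~\ref{lem7.9.3} (which I will in addition take small enough to satisfy the conclusions of Proposition~\ref{prop1.2}) every morphism between two points of $U$ is of the form $\Gamma(g,\cdot)$ for a unique $g\in G_{x_0}$. For the first step I would choose an open set $W^{+}$ with $\overline W\subset W^{+}\subset\overline{W^{+}}\subset U$ (here $X$ is metrizable, being a second-countable paracompact M-polyfold, hence normal), carry it into the chart $\varphi:U\to K^{\mathcal S}$, and invoke Lemma~\ref{l2} (after a harmless preliminary shrinking so that its hypotheses hold inside the splicing core) to produce an sc-smooth $\tilde f:U\to[0,1]$ with $\tilde f\equiv 1$ on $W^{+}\supset\overline W$ and with $\supp\tilde f$ a closed subset of $X$ contained in $U$. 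Then I would symmetrize over the finite natural representation $g\mapsto\varphi_g$ of $G_{x_0}$, setting $f_2:=\tfrac{1}{\sharp G_{x_0}}\sum_{g\in G_{x_0}}\tilde f\circ\varphi_g$. This $f_2:U\to[0,1]$ is sc-smooth (a finite sum of compositions with sc-diffeomorphisms), satisfies $f_2\circ\varphi_h=f_2$ for all $h\in G_{x_0}$, still equals $1$ on $\overline W$ (since $\overline W$ is $G_{x_0}$-invariant and each $\tilde f\circ\varphi_g\equiv 1$ there), and has $\supp f_2\subset\bigcup_g\varphi_g(\supp\tilde f)\subset U$, again closed in $X$; extending by $0$ I regard $f_2$ as an sc-smooth function on all of $X$ vanishing off $U$.

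For the second step I would define $f:X\to[0,1]$ by $f(y)=f_2(x)$ whenever there is a morphism $y\to x$ with $x\in U$, and $f(y)=0$ when $y\notin\pi^{-1}(\pi(U))$. This is well defined: two morphisms $y\to x$ and $y\to x'$ with $x,x'\in U$ compose to a morphism $x\to x'$ between points of $U$, so $x'=\varphi_g(x)$ for some $g\in G_{x_0}$ by Proposition~\ref{prop1.2}, and hence $f_2(x')=(f_2\circ\varphi_g)(x)=f_2(x)$. The same computation shows that $f$ is invariant under morphisms, i.e. an sc-functor, and that $f=f_2$ on $U$; in particular $f\equiv 1$ on $\overline W$.

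For the third step I would verify that $f$ is sc-smooth at an arbitrary $y_0\in X$ by splitting into the three alternatives of Lemma~\ref{lem7.9.3}. If $y_0$ is joined by a morphism to some $x_0'\in U$, that morphism extends to an sc-diffeomorphism $t\circ s^{-1}$ of a neighbourhood $V(y_0)$ onto a neighbourhood of $x_0'$ inside $U$, and there $f=f_2\circ(t\circ s^{-1})$ is sc-smooth. If $y_0$ admits no morphism to any point of $\overline U$, the second bullet of Lemma~\ref{lem7.9.3} gives a neighbourhood $V$ of $y_0$ with no morphism into $\overline U$, so $f\equiv 0$ on $V$. If $y_0$ is joined to $\partial U$ but to no point of $U$, I would use that $\supp f_2$ is closed in $X$ and contained in $U$ to pick an open neighbourhood $W_\partial$ of the set-theoretic boundary $\partial U$ disjoint from $\supp f_2$, and then the third bullet of Lemma~\ref{lem7.9.3}, applied with this $W_\partial$, produces a neighbourhood $V$ of $y_0$ such that every $y'\in V$ joined to some $x'\in U$ has $x'\in W_\partial$, hence $f_2(x')=0$ and again $f\equiv 0$ on $V$. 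These cases exhaust $X$, so $f$ is sc-smooth; moreover the union of all the neighbourhoods just obtained on which $f$ vanishes is an open set containing $X\setminus\pi^{-1}(\pi(U))$, which gives $\supp f\subset\pi^{-1}(\pi(U))$. The routine part is the first step (the bump function plus averaging); I expect the main obstacle to be the third step, and within it the case of a point joined to $\partial U$ but not to $U$, where a naive transport of $f_2$ need not even be continuous and it is exactly the separation properties built into the choice of $U$ — together with the fact that $\supp f_2$ stays away from $\partial U$ — that rescue sc-smoothness.
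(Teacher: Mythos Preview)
Your proof is correct and follows essentially the same route as the paper's: build a bump by Lemma~\ref{l2}, average over $G_{x_0}$, extend along morphisms, and verify sc-smoothness and the support condition via the three alternatives of Lemma~\ref{lem7.9.3}. One small point to tighten: your assertion that $\supp f_2=\bigcup_g\varphi_g(\supp\tilde f)$ is closed in $X$ (not merely in $U$) is not automatic for an arbitrary sc-diffeomorphism of $U$; the paper sidesteps this by choosing the intermediate set $V$ (your $W^{+}$) to be $G_{x_0}$-invariant with $\overline V\subset U$, so that the averaged bump has $\{f_1\neq 0\}\subset V$ and hence $\supp f_1\subset\overline V\subset U$ directly.
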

\begin{proof}
We choose an open $G_{x_0}$-invariant set $V$ such that  $\ov{W}\subset V\subset \ov{V}\subset U$.  With the help of  Lemma \ref{l2} and the chart $\varphi:U\to K^{\mathcal S }$, we find an sc-smooth function $f_0: X\to [0,1]$  satisfying $\supp f_0\subset V$ and $f_0=1$ on $\ov{W}$.  Define
 the function $f_1$ on $U$  by
 $$f_1 (x)=\dfrac{1}{\sharp G_{x_0}}\sum_{g\in G_{x_0}}f_0 (\varphi_g (x)), \quad x\in U.$$
Then   $f_1$ is sc-smooth as  a  finite sum of sc-smooth functions, $0\leq  f_1 \leq 1$, and  $f_1 =1$  on $x\in \ov{W}$ since $\ov{W}$ is $G_{x_0}$-invariant.   Since $f_0 =0$ on $U\setminus \ov{V}$ and $U\setminus \ov{V}$ is $G_{x_0}$-invariant,  it follows that also $f_1=0$ on $U\setminus \ov{V}$.  In particular, $f_1=0$ on $\partial V$.

We extend $f_1$ to the function  $f:X\to [0,1]$ as follows.  If $x\in U$,  then $f(x):=f_1 (x).$ If  there exists a morphism between $x$ and some point $y\in U$, then  set $f(x):=f_1  (y).$ Finally, if there is no morphism between $x$ and a point in $U$, then set  $f(x):=0.$

Clearly, $f(x)=f(y)$ if there  is a morphism $h: x\to y$. Note also that $f (x)=0$ for $x\in \partial U$. Indeed, if there are no morphisms  between $x$ and points of $U$, then  by the definition of $f$, $f(x)=0$. If there exists a morphism between $x$ and $y\in U$, then by Lemma \ref{lem7.9.3}, the point $x$ belongs to $U\setminus \ov{V}$,  so that again $f (x)=f_1(y)=0$.

We already know that $f$ is sc-smooth on $U$. To show that it is sc-smooth on $X$, we
take   $x\in X\setminus U$ and consider the following cases.  If  there is no morphism between $x$ and a point in  $\ov{U}$, in particular, there is no morphism between $x$ and a point in $U$, then $f(x)=0$. By   part (b) of Lemma \ref{lem7.9.3},  $f=0$ on some  open neighborhood $U_x$ of $x$  and so $f$ is sc-smooth on $U_x$.

Next assume that  there exists a morphism $h: x\to y$  between the point $x$ and a point $y\in U$. According to the definition of $f$,  $f (x)=f_1 (y)$. We find two  open neighborhoods $U_x$ and $U_y$  of $x$ and $y$ such that  $U_y\subset U$  and   $t\circ s^{-1}:U_x\to U_y$ is an sc-diffeomorphism.  Then, $f =f_1 \circ t\circ s^{-1}$ on $U_x$ and since the right hand side is an sc-smooth function,  the function $f$ is sc-smooth on $U_x$.

In the last case, assume that  there is no morphism between $x$ and points of $U$ but there is a morphism between $x$ and some  point $y\in \partial U$. Then again we find open neighborhoods $U_x$ and $U_y$ of points $x$ and $y$ such that $t\circ s^{-1}:U_x\to U_y$ is an sc-diffemorphism. By  Lemma  \ref{lem7.9.3}, we may  take  these neighborhoods so small that the following holds. If there exists a morphism between a point $y'\in U_y$ and a point $z\in U$, then necessarily $z\in U\setminus W$.  At $y'$ we have $f(y')=0$ since $f_1=0$ on $U\setminus W$. If there are no morphism between $y'\in U_y$ and points in $U$, then $f (y')=0$. Hence $f$ is equal to $0$ on $U_y$ and  since $f\vert U_x=(f\vert U_y)\circ (t\circ s^{-1})$,  we conclude that $f$ is equal to $0$ on $U_x$.  So we  proved that the function $f$ is sc-smooth on $X$.

It remains to prove  that $\supp f\subset \pi^{-1}(\pi (U))$. At every point $x\not \in \pi^{-1}(\pi (U))$, $f(x)=0$.   Hence  it  is  enough to show that for every $x\in \partial \pi^{-1}(\pi (U))$ there exists a neighborhood $U_x$ of $x$ such that $f=0$ on $U_x$.  To see this, we prove that there is an  open neighborhood $U_x$ of $x$ such that if there is a morphism between $x'\in U_x$ and a point in $y\in U$,  then $y\in U\setminus W$. Otherwise, we find a sequence  $(x_n)$ converging to $x$, a sequence $(y_n)\subset W$, and a sequence  $(h_n)$ of morphisms $h_n:y_n\to x_n$.  Since the map $t:s^{-1}(\ov{U(x_0}))\to X$ is proper, there is  subsequence of the morphisms $(h_n)$ converging to the  morphism $h$. This implies that  the subsequence of $(y_n)$ converges to the point $y\in W$ and that $h:y\to x$,  contradicting the fact that there are no morphisms between points in $\partial \pi^{-1}(\pi (U))$ and points in  $U$.   Hence $f =0$ on $U_x$ and this proves that $\supp f \subset \pi^{-1}(\pi (U)).$
\end{proof}

\begin{proof}[Proof of Theorem \ref{scpounity}]
Let ${(O_{\alpha})}_{\alpha \in A}$ be an open cover of $\abs{X}$.  In view of  Lemma \ref{invarinatcover}, there are open covers $(W_j)_{j\in J}$ and $(U_j)_{j\in J}$ subordinate to $(\pi^{-1}(O_{\alpha}))$ such that $\ov{W_j}\subset U_j$. Moreover, the sets
$W_j$ and $U_j$ are invariant with respect to the natural representation of $U_j$ and the cover $(\pi^{-1}(\pi (U_j)))_{j\in J}$ is locally finite. By Lemma \ref{lemmascp1}, for every
$j\in J$, there is an sc-smooth functor $f_j':X\to [0,1]$ which is equal to $1$ on $\ov{W_j}$ and $\supp f_j'\subset \pi^{-1}(\pi (U_j))$. Set $f'=\sum_{j\in J}f_j'$. In view of the local finiteness of   $(\pi^{-1}(\pi (U_j)))_{j\in J}$, the sum  has only a finitely many nonzero terms in a neighborhood of each point and thus defines an sc-smooth function. Because $f_j=1$ on $W_j$ and every point of $X$ is in some $W_j$, the sum is also positive. Now define
$f_j=\tfrac{f'_j}{f'}$. Then each $f_j$ is an sc-smooth functor such that $\supp f_j\subset \pi^{-1}(\pi (U_j))$.   Finally, we may reindex our functions $f_j$ so that they are indexed by the indices in the set $A$.  Since  the cover $(U_j)_{j\in J}$ is a refinement of  $(\pi^{-1}(O_{\alpha}))$,  we choose for each $j$ an index $\alpha (j)$ such that  $U_j\subset \pi^{-1}(O_{\alpha (j)})$. Then for each $\alpha\in A$,  we define $g_{\alpha}=\sum_{j, \,  \alpha (j)=\alpha}f_{j}$. If there is no $j$ such that $\alpha (j)=\alpha$, then we set $g_{\alpha}=0$.  Every $g_{\alpha}$ is smooth and invariant under morphisms and satisfies $0\leq g_{\alpha}\leq 1$ an $\supp g_{\alpha}\subset \pi^{-1}(O_{\alpha})$.  In addition, $\sum_{\alpha\in A}g_{\alpha}=\sum_{j\in J}f_j=1$.  Consequently, $(g_\alpha )_{\alpha \in A}$ is a desired sc-smooth partition of unity.
\end{proof}

\subsection{Poincar\'e Lemma for Sc-Differential Forms}
We begin with a basic version of the Poincar\'e lemma in an sc-context. We follow the proof in \cite{Lang} for the Hilbert-space case and  verify that all steps are valid in the sc-world. Then the  extension to ep-groupoids is fairly straightforward.

\begin{lem}\label{poncare0}
Assume that $U$ is a relatively open convex neighborhood of $0$ in the partial quadrant $[0,\infty)^n\oplus W$ where $W$ is an  sc-Banach space. Let $\omega$ be a closed sc-differential $k$-form defined on $U$. Then there exists a sc-differential form $\tau$ on $U$ so that $d\tau=\omega$ on $U^1$.
\end{lem}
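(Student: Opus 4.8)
The plan is to prove this by the classical radial homotopy operator, following \cite{Lang} for the Banach‑space case, and to check that each step survives in the sc‑calculus. One may assume $k\ge 1$, since for $k=0$ a closed form on the connected set $U$ is constant and there is nothing to prove. Because $U$ is convex and contains $0$, the map $H:[0,1]\times U\to U$, $H(t,x)=tx$, is well defined: scaling by $t\in[0,1]$ preserves the partial quadrant $[0,\infty)^n\oplus W$, and $tx=tx+(1-t)\cdot 0\in U$ by convexity; since scalar multiplication on an sc‑Banach space is sc‑smooth, $H$ is sc‑smooth. Define the homotopy operator $h$ on sc‑differential $k$‑forms by
$$(h\omega)_x(v_1,\dots,v_{k-1})=\int_0^1 t^{k-1}\,\omega_{tx}(x,v_1,\dots,v_{k-1})\,dt ,$$
which is the explicit form of the fibre integral $\int_0^1\iota_{\partial_t}H^\ast\omega\,dt$, and set $\tau=h\omega$. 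It is immediate that $\tau$ is $(k-1)$‑linear and skew‑symmetric in $v_1,\dots,v_{k-1}$, and the scale bookkeeping is consistent: if $x$ is a base point of $\omega$, i.e.\ $x\in U^1$, then $tx\in U^1$ as well, so $\tau$ is again a form over base points in $U^1$, hence an sc‑differential $(k-1)$‑form on $U$ once sc‑smoothness is verified.

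Sc‑smoothness of $\tau$ rests on the following auxiliary lemma, which I would prove first: if $f:[0,1]\times\mathcal O\to\R$ is sc‑smooth on an open subset $\mathcal O$ of an sc‑Banach space, then $g(y)=\int_0^1 f(t,y)\,dt$ is sc‑smooth with $Dg(y)\cdot\delta y=\int_0^1 D_yf(t,y)\cdot\delta y\,dt$, and likewise for all higher derivatives. The proof is by induction on the order of differentiability: at each stage one checks the $\ssc^0$‑property and the differentiation‑under‑the‑integral identity by dominated convergence, using that $f$ and its derivatives are sc‑continuous and hence bounded on sets $[0,1]\times C$ with $C\subset\mathcal O_m$ compact, and recording that $D_yf$ lives one scale level higher, exactly as $d$ does. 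Applied to the integrand $f(t,x,v_1,\dots,v_{k-1})=t^{k-1}\omega_{tx}(x,v_1,\dots,v_{k-1})$ — sc‑smooth as a composition of $H$, the sc‑smooth map $\omega$, the identity on the $v_i$, and multiplication by $t^{k-1}$ — this yields that $\tau$ is sc‑smooth.

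It remains to establish the homotopy identity $d(h\omega)+h(d\omega)=\omega$ on $U^1$. This is the familiar computation carried out directly from the explicit formula: one differentiates $\tau$ under the integral sign (legitimate by the auxiliary lemma, and with vanishing Lie‑bracket terms in the $d$‑formula since constant vector fields are used in linear coordinates), applies the chain rule to $x\mapsto\omega_{tx}$, observes that the derivative‑of‑$\omega$ contributions cancel against the corresponding terms of $h(d\omega)$, and recognizes the remainder as $\int_0^1\tfrac{d}{dt}\bigl(t^k\omega_{tx}(\cdot)\bigr)\,dt$, whose value is $\omega$ at $t=1$ and $0$ at $t=0$ (the latter because $k\ge1$). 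Since the exterior derivative drops one scale level, this is an equality of forms on $U^1$, not on $U$; and interchanging $d$ with $\int_0^1(\cdot)\,dt$ is again supplied by the auxiliary lemma. Using $d\omega=0$, the term $h(d\omega)$ vanishes, so $d\tau=\omega$ on $U^1$.

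I expect the auxiliary lemma to be the only genuine obstacle: proving that integration over the compact parameter interval preserves sc‑smoothness, with all estimates uniform on the relevant compacta at every scale level. Once this is in hand the algebraic homotopy identity is routine, and the corner structure causes no difficulty, since the integration is only in $t$, the flow $H$ maps each face $\{x_i=0\}$ into itself, and no boundary terms in the $x$‑variables arise. The subsequent passage to splicing cores, and then to ep‑groupoids — where one further notes that $h$ commutes with the local sc‑diffeomorphisms given by the natural representations, so that $\tau$ inherits compatibility with morphisms — is then straightforward.
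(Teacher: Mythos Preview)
Your proposal is correct and follows essentially the same route as the paper: both use the radial homotopy operator $\tau_y(v_1,\dots,v_{k-1})=\int_0^1 t^{k-1}\omega_{ty}(y,v_1,\dots,v_{k-1})\,dt$, argue sc-smoothness via integration over the compact parameter interval, and compute $d\tau=\omega$ using constant vector fields. The only stylistic difference is that you prove the full homotopy identity $d(h\omega)+h(d\omega)=\omega$ and then invoke $d\omega=0$, whereas the paper inserts $d\omega=0$ at the outset and computes $d\tau$ directly; and you are more explicit about the auxiliary ``integration preserves sc-smoothness'' step, which the paper asserts without further comment.

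One small caveat on your closing remark about the passage to ep-groupoids: the homotopy operator $h$ is centered at $0$, and while the natural representation of $G_x$ fixes $x$ (hence $0$ in the chart), it need not act linearly, so $h$ will not in general commute with it. The paper handles this by averaging the primitive $\tau$ over $G_x$ rather than by claiming invariance of $h$. This is outside the scope of the lemma itself, but worth correcting if you plan to carry the argument further.
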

\begin{proof}
For $v_1,\ldots ,v_{k-1}\in \R^n\oplus W$ and $y\in U$, we define  the form $\tau$ by setting
$$
\tau(y)(v_1,\ldots ,v_{k-1})=\int_0^1 t^{k-1}\omega(ty)(y,v_1,\ldots ,v_{k-1})dt.
$$
This is a well-defined expression. Using our  assumptions,   we see that  the map
$[0,1]\oplus U^1\oplus (\oplus_{k-1}({\mathbb R}^n\oplus W))\rightarrow {\mathbb R}$ defined by
$$
(t,y,v_1,\ldots ,v_{k-1})\mapsto \omega(ty)(y,v_1,\ldots, v_{k-1})
$$
is sc-smooth. Because $\tau$ is obtained by integrating with respect to $t$ over $[0,1]$,  we conclude  that
$$
\tau:U^1\oplus(\oplus_{k-1}({\mathbb R}^n\oplus W))\rightarrow {\mathbb R}
$$
is sc-smooth and hence an sc- differential $(k-1)$-form on $U$. Its exterior derivative $d\tau$ is an sc-differential $k$-form on $U^1$. In order to show that $d\tau=\omega$,  it suffices to show the equality on a dense set of points in $U$. Take a smooth point $y\in U$ and   smooth points $e_0,\ldots, e_{k-1}\in  \R^n\oplus W$  which we view as constant vector fields. Then we make use of the fact that the form $\omega$ is closed and  find, for $t\in [0,1],$ that
\begin{equation}\label{opera}
\begin{split}
0&=d\omega(ty)(y,e_0,\ldots ,e_{k-1})
= \dfrac{d}{dt}(\omega(ty)(e_0,\ldots ,e_{k-1}))\\
&\phantom{quad }-
\sum_{i=0}^{k-1} (-1)^i (D\omega(ty)\cdot e_i)(y,e_0,\ldots ,\wh{e_i},\ldots ,e_{k-1}.
\end{split}
\end{equation}
Using (\ref{opera}) we  find that  at a smooth point $y\in U$,
\begin{equation*}
\begin{split}
&d\tau(y)(e_0,\ldots ,e_{k-1})\\
&= \sum_{i=0}^{k-1} (-1)^i \int_0^1 t^{k-1} ( D(\omega(ty)(y,e_0,\ldots ,\wh{e_i},\ldots ,e_{k-1}))\cdot e_i)dt\\
&= \sum_{i=0}^{k-1} (-1)^i \int_0^1 t^{k-1}\omega(ty)(e_i,e_0,\ldots ,\wh{e_i},\ldots ,e_{k-1}) dt\\
&\phantom{=}+ \sum_{i=0}^{k-1} (-1)^i \int_0^1 t^{k-1}(D\omega(ty)\cdot e_i)(y,e_0,\ldots ,\wh{e_i},\ldots ,e_{k-1})dt\\
&=\int_0^1 k t^{k-1} \omega(ty)(e_0,\ldots ,e_{k-1}) dt
+\int_0^1 t^{k-1}\frac{d}{dt}(\omega(ty)(e_0,\ldots ,e_{k-1})) dt\\
&=\int_0^1 \frac{d}{dt}(t^k\omega(ty)(e_0,\ldots ,e_{k-1}))dt= \omega(y)(e_0,\ldots ,e_{k-1}).
\end{split}
\end{equation*}
This shows that $d\tau=\omega$ on $U^1$ and completes the proof.
\end{proof}
The argument in the  proof of Lemma \ref{poncare0} can be slightly modified to hold near points on level $1$. This time we cannot assume that the point in question is $0=(0,0)$ but  is of  the form $(0,w_0)$ with $w_0$ on level $1$.
Assume that $K$ is a splicing core associated to the splicing $(\pi,E,V)$
where $V$ is relatively open in $[0,\infty)^n\oplus W$. Let $x\in K$ be smooth
and  let $\omega$  be an sc-differential $k$-form defined on an open neighborhood of $x$.
Suppose that the form $\omega$ is closed. Then we show that $\omega=d\tau$ on some open neighborhood of $x$ in level $1$.
The map $r:V\oplus E\rightarrow K:(v,e)\rightarrow (v,\pi_v(e))$ pulls back the form  $\omega$ to a form on some relatively open set in a partial quadrant. We may assume without loss of generality that the latter is defined around the point $0$. By the previous lemma,  we find an sc-differential $(k-1)$-form  $\tau$ defined near $0$ so that $d\tau=r^\ast\omega$ on $U^1$ where $U$ is an open neighborhood of $x$ on level $0$. Using the identity $r\circ r=r$,  we conclude that
$$
d(r^\ast\tau) = r^\ast\omega.
$$
Restricting to the splicing core implies that if  $O$ is an open neighborhood of a smooth point, then  $d\tau=\omega$ on $O^1$.

Next we assume that we are in the ep-groupoid world. Let $X$ be an ep-groupoid and let $x\in X$ be a smooth point. Take an open neighborhood $U(x)$ which is homeomorphic to an open subset of splicing core $K$ and  which  is invariant under the  natural representation of the isotropy group $G_x$.  Let  $\omega$  be a closed form on $U(x)$  which is compatible with the $G_x$-action.  We  push-forward  the form $\omega$  to an open open set in the splicing core $K$ and use  Lemma \ref{poncare0} to construct a form $\tau$ so that $d\tau=\omega$. We pull-back this form to $U(x)$ and average it to obtain
a form $\tau'$ satisfying  $d\tau'=\omega$. Using morphisms we can extend the definition of  $\tau'$ to the saturated open neighborhood of $\pi^{-1}(\pi (U(x)))$ of the point $x$. (Here $\pi$ is the quotient map $\pi:X\to \abs{X}$.)

Hence we obtain the following  result.
\begin{prop}
Let $X$ be  an ep-groupoid and  let $\omega$  be a closed sc-differential $k$-form on $X$. Then, given a  point $x\in X_\infty$,  there exists a saturated open neighborhood $U$ of $x$ in $X$  and an sc- differential $(k-1)$-form $\tau$ on $U$ so that
$$
d\tau = \omega\quad \text{on}\ U^1.
$$
\end{prop}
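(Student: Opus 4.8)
The plan is to build the primitive $\tau$ in three stages, progressively enlarging the domain: first on a convex set in a partial quadrant, then on an open subset of a splicing core near a smooth point, and finally on a saturated neighborhood in the ep-groupoid. The first stage is precisely Lemma~\ref{poncare0}, which produces, for a closed sc-differential $k$-form $\omega$ on a relatively open convex neighborhood $U$ of $0$ in $[0,\infty)^n\oplus W$, an sc-differential $(k-1)$-form $\tau$ with $d\tau=\omega$ on $U^1$, via the explicit homotopy operator
$$
\tau(y)(v_1,\ldots,v_{k-1})=\int_0^1 t^{k-1}\omega(ty)(y,v_1,\ldots,v_{k-1})\,dt.
$$
The only things to verify there — sc-smoothness of the integrand in all variables, and the Cartan-type computation giving $d\tau=\omega$ on the dense set of smooth points — are already carried out in the proof of that lemma, so I would simply invoke it.

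For the second stage, I would let $K$ be a splicing core associated to a splicing $(\pi,E,V)$ with $V$ relatively open in $[0,\infty)^n\oplus W$ and let $x\in K$ be smooth, taken without loss of generality to correspond to $0$. The retraction $r:V\oplus E\to K$, $r(v,e)=(v,\pi_v(e))$, is sc-smooth and satisfies $r\circ r=r$ and $r|K=\id$. Pulling $\omega$ back by $r$ gives a closed sc-differential $k$-form defined near $0$ in the partial quadrant; applying the first stage yields $\tau_0$ with $d\tau_0=r^{*}\omega$ on the level-$1$ part of a neighborhood. From $r\circ r=r$ one gets $d(r^{*}\tau_0)=r^{*}\omega$, and restricting to $K$ (using $r|K=\id$) produces an sc-differential $(k-1)$-form $\tau=(r^{*}\tau_0)|K$ with $d\tau=\omega$ on $O^{1}$ for an open neighborhood $O$ of $x$ in $K$.

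For the third stage, given $x\in X_\infty$, I would use Lemma~\ref{lem7.9.3} to choose an open neighborhood $U(x)\subset X$ invariant under the natural representation $\varphi:G_x\to \text{Diff}_{\ssc}(U(x))$ of the isotropy group and identified, via a chart, with an open subset of a splicing core. Since $\omega$ is compatible with morphisms it is $G_x$-invariant on $U(x)$; I would push it to the splicing core, apply the second stage, pull the primitive back, and then average over $G_x$, i.e. replace the primitive $\tau'$ by $\tfrac{1}{\sharp G_x}\sum_{g\in G_x}\varphi_g^{*}\tau'$. Because $\omega$ is $G_x$-invariant and $d$ commutes with each $\varphi_g^{*}$, the averaged form still satisfies $d\tau'=\omega$ on $U(x)^1$ and is now itself $G_x$-invariant. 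Finally I would extend $\tau'$ to the saturation $U:=\pi^{-1}(\pi(U(x)))$ by transporting it along morphisms: for a morphism $g:y\to z$ with $y\in U(x)$, define $\tau'$ at $z$ as the pushforward of $\tau'$ at $y$ by $T(t\circ s^{-1})$. By Proposition~\ref{prop1.2} any two morphisms from $U(x)$ to a given point differ by an element of $G_x$, so $G_x$-invariance of $\tau'$ makes this well defined, and the \'etale and properness properties (used exactly as in Lemma~\ref{lemmascp1}) make the extension sc-smooth. The resulting $\tau'$ is then an sc-differential $(k-1)$-form on the saturated open set $U$ with $d\tau'=\omega$ on $U^1$.

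The main obstacle is this last step: checking that the averaged primitive extends coherently and sc-smoothly over the whole saturated neighborhood $U$. Well-definedness rests on the uniqueness clause of Proposition~\ref{prop1.2} together with $G_x$-invariance of $\tau'$; sc-smoothness of the extension near a point $z\notin U(x)$ requires, as in the proof of Lemma~\ref{lemmascp1}, choosing the local neighborhoods so that the relevant $t\circ s^{-1}$ is an sc-diffeomorphism onto a neighborhood inside $U(x)$ and then writing $\tau'$ there as the pullback of the already sc-smooth form by this sc-diffeomorphism. Everything else — sc-smoothness of $\tau_0$, the identity $d\tau=\omega$, and the commutation of $d$ with $\varphi_g^{*}$ and with chart transitions — is either routine or recorded earlier (e.g. Lemma~\ref{proplie2} underpins the chart-independence of $d$).
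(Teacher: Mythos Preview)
Your proposal is correct and follows essentially the same three-stage approach as the paper: invoke Lemma~\ref{poncare0} on a convex set in a partial quadrant, pass to a splicing core via the sc-smooth retraction $r$ and the identity $r\circ r=r$, then average over $G_x$ and extend along morphisms to the saturation. You supply more detail than the paper does on the well-definedness and sc-smoothness of the extension in the last step, but the strategy is identical.
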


\end{document}